\newtheorem*{mainresult}{Main result}
\newtheorem{theorem}{Theorem}[section]
\newtheorem{proposition}[theorem]{Proposition}
\newtheorem{lemma}[theorem]{Lemma}
\newtheorem{corollary}[theorem]{Corollary}
\newtheorem{conjecture}[theorem]{Conjecture}
\newtheorem{question}[theorem]{Question}
\theoremstyle{definition}
\newtheorem{definition}[theorem]{Definition}
\theoremstyle{remark}
\newtheorem{remark}[theorem]{Remark}
\newtheorem{example}[theorem]{Example}
\numberwithin{equation}{section}
\newcommand{\any}{\,\cdot\,}
\newcommand{\hook}{\mathbin{\lrcorner}}
\newcommand{\cA}{\mathcal{A}}
\newcommand{\cC}{\mathcal{C}}
\newcommand{\bC}{\mathbb{C}}
\newcommand{\bR}{\mathbb{R}}
\newcommand{\bN}{\mathbb{N}}
\newcommand{\bZ}{\mathbb{Z}}
\newcommand{\lie}[1]{\mathfrak{#1}}
\newcommand{\mfa}{\lie{a}}
\newcommand{\mfg}{\lie{g}}
\newcommand{\mfh}{\lie{h}}
\newcommand{\mfk}{\lie{k}}
\newcommand{\mfp}{\lie{p}}
\newcommand{\mfr}{\lie{r}}
\newcommand{\aff}{\lie{aff}}
\newcommand{\gl}{\lie{gl}}
\newcommand{\sP}{\lie{sp}}
\newcommand{\un}{\lie{u}}
\newcommand{\spa}[1]{\mathrm{span}(#1)}
\newcommand{\cycl}{\mathrm{cyclic}}
\newcommand{\derg}{\mfg'}
\DeclareMathOperator{\re}{Re}
\DeclareMathOperator{\im}{Im}
\DeclareMathOperator{\End}{End}
\DeclareMathOperator{\Hom}{Hom}
\DeclareMathOperator{\ad}{ad}
\DeclareMathOperator{\id}{id}
\DeclareMathOperator{\inc}{inc}
\DeclareMathOperator{\tr}{tr}
\DeclarePairedDelimiter{\abs}{\lvert}{\rvert}
\DeclarePairedDelimiter{\norm}{\lVert}{\rVert}
\DeclarePairedDelimiter{\paren}{\lparen}{\rparen}
\DeclarePairedDelimiter{\Set}{\lbrace}{\rbrace}
\begin{document}

\title[Compatibility of balanced and SKT metrics]{Compatibility of
balanced and SKT metrics on two-step solvable Lie groups}

\author{Marco Freibert}

\address{Mathematisches Seminar\\
Christian-Albrechts-Universit\"at zu Kiel\\
Heinrich-Hecht-Platz 6\\
D-24118 Kiel\\
Germany}

\email{freibert@math.uni-kiel.de}

\author{Andrew Swann}

\address{Department of Mathematics and DIGIT\\
Aarhus University\\
Ny Munkegade 118, Bldg 1530\\
DK-8000 Aarhus C\\
Denmark}

\email{swann@math.au.dk}

\begin{abstract}
  It has been conjectured by Fino and Vezzoni that a compact complex
  manifold admitting both a compatible SKT and a compatible balanced
  metric also admits a compatible K\"ahler metric.
  Using the shear construction and classification results for two-step
  solvable SKT Lie algebras from our previous work, we prove this
  conjecture for compact two-step solvmanifolds endowed with an
  invariant complex structure which is either (a)~of pure type or
  (b)~of dimension six.
  In contrast, we provide two counterexamples for a natural
  generalisation of this conjecture in the homogeneous invariant
  setting.
  As part of the work, we obtain further classification results for
  invariant SKT, balanced and K\"ahler structures on two-step solvable
  Lie groups.
  In particular, we give the full classification of left-invariant SKT
  structures on two-step solvable Lie groups in dimension six.
\end{abstract}

\maketitle

\tableofcontents

\section{Introduction}

Hermitian geometry has been a very active field of study for several
decades.
Historically, most focus has been on K\"ahler manifolds and many
important result have been obtained for these manifolds, including
several severe topological restrictions in the compact case.
These topological restrictions show that most compact complex
manifolds do not admit a K\"ahler structure and this has led to rising
interest in generalisations of K\"ahler manifolds.
In particular, two types of non-K\"ahler Hermitian manifolds
\( (M,g,J,\sigma) \) have intensively been investigated, namely
\emph{strong K\"ahler with torsion (SKT)} and \emph{balanced}
manifolds, which are characterised by \( dJ^*d\sigma=0 \) or
\( \delta_g \sigma=0 \), respectively.

Interest in SKT manifolds stems from various sources.
First of all, these manifolds are precisely those almost Hermitian
manifolds for which there exists a compatible
connection~\( \nabla^B \) with totally skew-symmetric
torsion~\( T^B \) that, when considered as a three-form, is closed.
It is because of this property that they occur in physics in the
context of supersymmetric theories, see for example~\cite{GHR,HP,St}.
Secondly, any conformal class on a compact complex surface contains an
SKT metric~\cite{Gau}, a property which is no longer true in higher
dimensions.
Moreover, any compact even-dimensional Lie group admits a
left-invariant SKT structure~\cite{SSTvP,MaSw}.

Balanced metrics are of interest since they occur naturally on several
types of complex manifolds: for example, any unimodular complex Lie
group~\cite{AG} admits a compatible left-invariant balanced metric and
any compact complex manifold which is bimeromorphic to a compact
K\"ahler manifold admits a compatible balanced metric~\cite{AB}.
Furthermore, they are an important ingredient in the Strominger system
from physics \cite{FY,Fe}.

The SKT and balanced conditions for a fixed Hermitian structure are
known to be mutually exclusive in the sense that any compatible metric
\( g \) on a complex manifold \( (M,J) \) which is both SKT and
balanced has to be K\"ahler~\cite{AI}.
More generally, Fino and Vezzoni conjectured

\begin{conjecture}[{\cite[Problem~3]{FiV1}, \cite[Conjecture]{FiV2}}]
  \label{conj:FV}
  Any compact complex manifold \( (M,J) \) which admits a compatible
  SKT metric and admits a compatible balanced metric also admits a
  compatible K\"ahler metric.
\end{conjecture}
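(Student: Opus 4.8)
The full conjecture is well out of reach, so the plan is to prove it for the class of spaces named in the title of the paper: compact two-step solvmanifolds \( M=\Gamma\backslash G \) equipped with an invariant complex structure \( J \), and in the two sub-cases where the classification machinery is decisive, namely when \( J \) is of pure type or when \( \dim_{\bR}\mfg=6 \).

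The first step is to descend from \( M \) to the Lie algebra \( \mfg \) of \( G \). Given a compatible SKT metric and a compatible balanced metric on \( (M,J) \), I would average both over \( M \) against the invariant volume to obtain invariant tensors. The delicate point is that this symmetrisation a priori need not preserve the SKT or the balanced condition on a general solvmanifold, so one either restricts to the completely solvable case, where invariant forms compute de Rham cohomology and the Dolbeault symmetrisation lemmas apply, or argues directly that \( dd^{c}\omega \) and \( d\omega^{n-1} \) average correctly here. The problem is thereby reduced to the purely Lie-algebraic statement: if a two-step solvable \( (\mfg,J) \) admits both a compatible SKT inner product and a compatible balanced inner product, then it admits a compatible K\"ahler inner product. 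By the Alexandrov--Ivanov result already quoted one cannot expect the SKT metric or the balanced metric itself to be K\"ahler unless they coincide; the content is rather that the \emph{existence} of the two forces \( J \) into a shape admitting some K\"ahler companion.

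For this I would feed \( (\mfg,J) \) through the shear construction and the classification of two-step solvable SKT Lie algebras from our earlier work. The shear picture describes \( (\mfg,J) \) in terms of an abelian complex Lie algebra together with shearing data, an \( \End \)-valued and a vector-valued \( (1,0) \)-form subject to integrability and Jacobi constraints, and the SKT condition \( dd^{c}\omega=0 \) pins this data down tightly. When \( J \) is of pure type the shear data is homogeneous enough that one can parametrise all compatible balanced Hermitian forms directly and check that each of them is already closed, so \( (\mfg,J) \) is of K\"ahler type. When instead \( \dim_{\bR}\mfg=6 \), the classification of SKT structures, which this paper completes, is an explicit finite list; for each entry one writes the general invariant Hermitian form, imposes \( d\omega^{2}=0 \), and checks that the resulting algebraic system has a solution only when that solution is in fact K\"ahler, which is a bounded computation.

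The main obstacle sits in two places. One is the symmetrisation: outside the completely solvable range, averaging can genuinely destroy the SKT or balanced property, which is the reason for phrasing the hypotheses around invariant \( J \) and for treating the general homogeneous situation separately, where the natural generalisation in fact turns out to be false. The other is the simultaneous handling of the two first-order systems \( dd^{c}\omega=0 \) and \( d\omega^{n-1}=0 \): a generic Hermitian argument does not force K\"ahler, as the counterexamples to the obvious strengthening show, so the proof must exploit the two-step solvable shear normal form, with the pure-type condition, or the smallness of dimension six, supplying exactly the rigidity that makes the two systems incompatible unless \( d\omega=0 \).
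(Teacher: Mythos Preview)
Your high-level reduction is right: averaging brings the problem down to the Lie algebra, and compactness forces \( \mfg \) to be unimodular---a fact you do not invoke but which is essential, since the paper exhibits non-unimodular two-step solvable counterexamples (Examples~\ref{ex:counterexampletypeI} and~\ref{ex:counterexampletypeIII}). The averaging step is handled cleanly by citing Fino--Grantcharov and Ugarte (Proposition~\ref{pro:invariantbalancedorSKT}); no restriction to the completely solvable case is needed, so your worry there is misplaced.

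The genuine gap is in your treatment of the pure types. Your claim that ``one can parametrise all compatible balanced Hermitian forms directly and check that each of them is already closed'' is not what happens and would not work. The three pure types require three entirely different arguments:
\begin{itemize}
\item \emph{Type~I} (\( \derg \) totally real): unimodularity together with the SKT classification from~\cite{FrSw2} forces \( \mfg \) to be two-step nilpotent, and one then quotes the nilmanifold result of~\cite{FiV2}.
\item \emph{Type~II} (\( \derg \) complex): one does not show the balanced metric is closed. Instead, after a detailed analysis of the SKT condition (proving all \( \ad(Z)|_{\derg} \) lie in \( \un(\derg,g,J) \)), one builds a \emph{hybrid} metric \( g \) agreeing with the SKT metric \( \tilde{g} \) on \( \derg \) and with the balanced metric \( \hat{g} \) on a suitable complement, and proves this \( g \) is simultaneously SKT and balanced, hence K\"ahler by Proposition~\ref{pro:balanced+SKT=Kahler}. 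A preparatory step (Proposition~\ref{pro:changeSKTmetric}) first replaces \( \tilde{g} \) by another SKT metric for which \( [\tilde{V}_J,\derg] \) and \( [\tilde{V}_J,\tilde{V}_J] \) are orthogonal.
\item \emph{Type~III} (\( \mfg = \derg + J\derg \)): here the conclusion is vacuous---no unimodular two-step solvable \( (\mfg,J) \) of this type can be both SKT and balanced. The contradiction comes from combining the balanced identity \( C = 0 \) of Corollary~\ref{co:balancedunimodular} with the SKT structure equations to force \( \tr(\ad(\tilde{X}_0)|_{\derg_{\tilde{r}}}) = \dim\derg_{\tilde{r}} = 0 \).
\end{itemize}

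For dimension six, the paper does not run through the whole SKT list as you propose. The pure-type theorems already cover most cases; only the single non-pure configuration \( (\dim\derg_J,\dim\derg_r,\dim V_J) = (2,1,2) \) remains, and for it one uses the explicit structure constants from~\cite{FrSw2} together with the balanced trace conditions of Proposition~\ref{pro:balanced} to exhibit a K\"ahler basis directly.
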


This conjecture has been confirmed in some special cases, including
twistor spaces of compact anti-self-dual Riemannian
manifolds~\cite{V}, non-K\"ahler manifolds belonging to the Fujiki
class~\( \cC \)~\cite{Ch} and for left-invariant complex structures on
compact semi-simple Lie groups~\cite{FGV,P}.
We will extend the latter result to all compact even-dimensional Lie
groups in Theorem~\ref{thm:compact}.

Recall that nil- and solvmanifolds are manifolds of the form
\( M = \Gamma\backslash G \), with \( G \) nilpotent or solvable,
respectively, and \( \Gamma \) a discrete subgroup.
One says that a complex structure~\( J \) on \( \Gamma\backslash G \)
is invariant if it pulls-back to a left-invariant complex structure
on~\( G \).
Now Conjecture~\ref{conj:FV} has been proved for all compact
nilmanifolds with invariant complex structure (by \cite{FiV2} combined
with \cite{AN}), and for the following classes of compact
solvmanifolds with invariant complex structure: six-dimensional
solvmanifolds with holomorphically trivial canonical
bundle~\cite{FiV1}, almost Abelian solvmanifolds~\cite{FiP1},
Oeljeklaus-Toma manifolds~\cite{Ot}, regular complex structures on
non-compact semi-simple Lie groups~\cite{G}, and special types of
invariant metrics on almost nilpotent solvmanifolds for which the
associated Lie algebra has a nilradical with one-dimensional
commutator~\cite{FiP2}.

Actually, compactness of \( \Gamma\backslash G \) implies that \( G \)
is unimodular, and in all of the above cases of
\( \Gamma\backslash G \) with invariant~\( J \) the results are proved
by just considering all left-invariant structures on~\( G \).
This is possible since \cite{FiG,U} showed that once \( J \) is
left-invariant, any compatible SKT or balanced metric on
\( \Gamma\backslash G \) may be averaged to a left-invariant metric of
the same type.
Hence, the following question has a positive answer for these Lie
groups~\( G \).

\begin{question}
  \label{quest:G}
  Let \( G \) be a unimodular Lie group with a left-invariant complex
  structure~\( J \).
  If \( (G,J) \) admits a left-invariant compatible SKT metric and
  admits a left-invariant compatible balanced metric, does \( (G,J) \)
  also admit a left-invariant compatible K\"ahler metric?
\end{question}

In this paper, we consider Question~\ref{quest:G} for \( G \) two-step
solvable.
We have imposed that all the metrics considered be left-invariant,
since any solvable group admits a K\"ahler metric, see
Proposition~\ref{pro:Kahleronsolvable}, however this metric is not
necessarily invariant.

We will give two examples of two-step solvable Lie groups to show the
condition of unimodularity is necessary.
In particular, Examples \ref{ex:counterexampletypeI}
and~\ref{ex:counterexampletypeIII} are the first known examples of Lie
groups with left-invariant complex structure admitting both
left-invariant SKT metrics and left-invariant balanced metrics,
without admitting left-invariant K\"ahler metrics.
As these examples are not unimodular, they do not give counterexamples
to Conjecture~\ref{conj:FV}.

In contrast, we give a positive answer to Question~\ref{quest:G} for
all two-step solvable Lie groups \( G \) endowed with an invariant
complex structure \( J \) in the following situations: (a) \( J \) is
of \emph{pure type}, (b) \( G \) is of dimension~\( 6 \).
Here ``pure type'' means one of three summands in a natural
decomposition of the Lie algebra \( \mfg \) vanishes.
Writing \( \derg = [\mfg,\mfg] \) for the derived algebra, we have the
following pure types: (I)~\( \derg \cap J\derg = 0 \),
(II)~\( \derg = J\derg \), (III)~\( \derg + J\derg = \mfg \).

Our approach, is to build on our study~\cite{FrSw2} of SKT structures
on two-step solvable Lie groups.
The individual cases are proved in
Theorems~\ref{th:compatibilitypuretypeI}, \ref{th:SKTpuretypeII} and
\ref{th:compatibilitypuretypeIII} for the three pure types.
For the six-dimensional case we first complete the classification of
SKT structures on two-step solvable Lie groups in
Theorem~\ref{th:classification6dSKT} and then apply this to
Question~\ref{quest:G} in Theorem~\ref{th:compatibility6d}.

From the remarks on averaging above, we then get

\begin{mainresult}
  Let \( \Gamma\backslash G \) be a compact two-step solvmanifold and
  with an invariant complex structure \( J \).
  Then Conjecture~\ref{conj:FV} holds if either \( G \) is
  six-dimensional, or \( (\Gamma\backslash G,J) \) is of pure type.
\end{mainresult}

The paper is organised as follows.
Definitions from Hermitian geometry, the notions of pure type, our
approach to two-step solvable Lie algebras via the shear construction,
and notation for concrete Lie algebras are summarised
in~\S\ref{sec:prelims}.
We then derive some general results for two-step solvable Lie
algebras, in the case that they have either a balanced structure
\S\ref{subsec:balanced}, or a K\"ahler structure
\S\ref{subsec:Kahler}, in the latter case obtaining more detailed
information than the general structural results of \cite{D}.
Then in \S\ref{sec:compatibility}, we prove the main theorems of the
paper, as described above.
One consequence is an explicit list of two-step solvable K\"ahler Lie
algebras in dimension six, see~Corollary~\ref{th:6dKahler}.

\section{Preliminaries}
\label{sec:prelims}

\subsection{Non-K\"ahler Hermitian geometry}
\label{subsec:nonkahlergeometry}

First, we recall the basic definitions.
We write \( (M,J) \) for a complex manifold, so \( J \) is an
\emph{integrable} complex structure.
A metric \( g \) is compatible with \( J \) if
\( g(J\any,J\any) = g(\any,\any) \), and then the triple \( (M,g,J) \)
is called a Hermitian structure.
We write \( \sigma \coloneqq g(J\any,\any) \) for the fundamental
two-form.

\begin{definition}
  A Hermitian manifold \( (M,g,J) \) of dimension \( 2n \) is
  \begin{enumerate}
  \item[(i)] \emph{K\"ahler} if \( d\sigma = 0 \),
  \item[(ii)] \emph{balanced} if \( d (\sigma^{n-1})=0 \),
  \item[(iii)] \emph{strong K\"ahler with torsion (SKT)} manifold if
    \( dJ^*d\sigma=0 \).
  \end{enumerate}
\end{definition}

The following result is given in \cite[Remark~1]{AI}.

\begin{proposition}
  \label{pro:balanced+SKT=Kahler}
  Let \( (M,g,J) \) be a Hermitian manifold which is both balanced and
  SKT.  Then \( (M,g,J) \) is a K\"ahler manifold. \qed
\end{proposition}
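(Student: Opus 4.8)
The plan is to reduce the statement to a single pointwise identity. If the complex dimension $n$ is at most $2$ there is nothing to prove, since then the balanced condition $d(\sigma^{n-1})=0$ already forces $d\sigma=0$ (for $n=1$ because $\sigma$ has top degree, for $n=2$ because $\sigma^{n-1}=\sigma$). So assume $n\ge 3$.

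First I would restate the hypotheses. From $d(\sigma^{n-1})=(n-1)\,\sigma^{n-2}\wedge d\sigma$ one sees that \emph{balanced} is equivalent to $\sigma^{n-2}\wedge d\sigma=0$, i.e.\ to the $3$-form $d\sigma$ being \emph{primitive} with respect to~$\sigma$. On the SKT side, put $d^c\coloneqq i(\bar\partial-\partial)$, so that $dd^c=-d^cd=2i\,\partial\bar\partial$ and $d^c\sigma=J(d\sigma)$, where $J$ denotes the induced action on forms (multiplication by $i^{q-p}$ in bidegree $(p,q)$); then the SKT condition $dJ^*d\sigma=0$ is, up to an irrelevant overall sign, the condition $dd^c\sigma=0$, equivalently $d^cd\sigma=0$.

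The heart of the matter is to differentiate the balanced identity by $d^c$. Since $d^c$ is a degree-one antiderivation, $d\sigma$ has odd degree, and $d^c(d\sigma)=d^cd\sigma=0$ by the SKT hypothesis, applying $d^c$ to $d\sigma\wedge\sigma^{n-2}=0$ yields $-(n-2)\,d\sigma\wedge\sigma^{n-3}\wedge d^c\sigma=0$, hence
\[
  \sigma^{n-3}\wedge d\sigma\wedge d^c\sigma=0 ,
\]
as $n\ge 3$. I would conclude with the Weil identity for primitive forms: applied to the primitive real $3$-form $d\sigma$ it reads $\ast\,d\sigma=-\tfrac{1}{(n-3)!}\,\sigma^{n-3}\wedge J(d\sigma)=-\tfrac{1}{(n-3)!}\,\sigma^{n-3}\wedge d^c\sigma$, so that, using $\alpha\wedge\ast\alpha=\norm{\alpha}^2\,\mathrm{vol}_g$,
\[
  \sigma^{n-3}\wedge d\sigma\wedge d^c\sigma=-(n-3)!\,\norm{d\sigma}^2\,\mathrm{vol}_g .
\]
Comparing the two displays forces $\norm{d\sigma}=0$, hence $d\sigma=0$, i.e.\ $(M,g,J)$ is K\"ahler.

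The step that needs care is the second display: one must be sure that $\sigma^{n-3}\wedge d\sigma\wedge d^c\sigma$ is a \emph{definite}, nonzero-coefficient multiple of the volume form (its overall sign is convention-dependent and immaterial). This is precisely where the balanced hypothesis enters a second time---it is what makes $d\sigma$ primitive, so that the Hodge--Riemann bilinear relations encoded in the Weil identity apply and the coefficient is $-(n-3)!\,\norm{d\sigma}^2$, rather than an expression that could vanish for $d\sigma\ne 0$. All the rest is routine bookkeeping with $d$, $d^c$, $J$ and the Lefschetz decomposition, and the whole argument is pointwise, using neither compactness nor integration.
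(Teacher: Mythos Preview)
Your proof is correct. Note, however, that the paper does not supply its own proof of this proposition: the statement is closed with a \qed\ and attributed to \cite[Remark~1]{AI} (Alexandrov--Ivanov), so there is nothing in the paper to compare against. What you have written is essentially the standard argument behind that reference: balanced forces $d\sigma$ to be primitive, SKT gives $d^c(d\sigma)=0$, applying $d^c$ to the primitivity relation yields $\sigma^{n-3}\wedge d\sigma\wedge d^c\sigma=0$, and the Weil identity for primitive forms identifies this (up to a nonzero constant) with $\lVert d\sigma\rVert^2\,\mathrm{vol}_g$. Your caveat about sign conventions is well placed; the only substantive point is the definiteness coming from the Hodge--Riemann relations, which you invoke correctly.
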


Next, consider a simply-connected Lie group \( G \) which admits a
cocompact discrete subgroup~\( \Gamma \) and consider the compact manifold
\( M \coloneqq \Gamma\backslash G \).
Any left-invariant tensor-field on~\( G \) may be pushed down to a
tensor field on~\( M \).
The resulting tensor fields on~\( M \) are said to be
\emph{invariant}.

By using averaging one has the following result of \cite{FiG}
and~\cite{U}.

\begin{proposition}
  \label{pro:invariantbalancedorSKT}
  Suppose \( M \coloneqq \Gamma \backslash G \) is a compact manifold
  that is the quotient of a simply-connected Lie group~\( G \) by a
  discrete subgroup~\( \Gamma \).

  Suppose \( J \) is an invariant complex structure on~\( M \).
  If \( (M,J) \) admits a compatible metric that is balanced or SKT,
  then it also admits a compatible invariant balanced or SKT metric,
  respectively. \qed
\end{proposition}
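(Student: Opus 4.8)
The plan is to construct an explicit linear symmetrisation operator $\operatorname{sym}$ on the differential forms of $M$ that commutes with the exterior derivative and preserves both the type and the positivity of forms, and then to apply it to the fundamental form in the SKT case and to its $(n-1)$-st power in the balanced case. Since $M=\Gamma\backslash G$ is compact, $G$ is unimodular, so $G$ carries a bi-invariant Haar measure and the induced measure $\mu$ on $M$ is finite and invariant under the right $G$-action $R_h\colon\Gamma g\mapsto\Gamma gh$; normalise it so that $\mu(M)=1$.

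First I would fix a basis $X_1,\dots,X_N$ of left-invariant vector fields on $G$, $N=\dim G$, with dual left-invariant $1$-forms $e^1,\dots,e^N$; these descend to a global frame and coframe on $M$, and a form on $M$ is \emph{invariant} precisely when its coefficients with respect to $e^1,\dots,e^N$ are constant. Writing a general $k$-form as $\beta=\sum_{\abs{I}=k}f_I\,e^I$ with $f_I\in C^\infty(M)$, set $\operatorname{sym}(\beta)\coloneqq\sum_I\bigl(\int_M f_I\,d\mu\bigr)e^I$, an invariant $k$-form. The key property is that $\operatorname{sym}$ commutes with $d$: since $d\beta=\sum_{I,j}(X_jf_I)\,e^j\wedge e^I+\sum_I f_I\,de^I$ and each $de^I$ is invariant, the identity $\operatorname{sym}(d\beta)=d(\operatorname{sym}\beta)$ reduces to $\int_M(X_jf_I)\,d\mu=0$ for all $j,I$; this holds because each $X_j$ has right translations as its flow and is therefore divergence-free for $\mu$, so that $\int_M(X_jf)\,d\mu=\int_M d\bigl(X_j\hook(f\mu)\bigr)=0$ by Stokes' theorem on the closed manifold $M$. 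Moreover, because $J$ is invariant it acts on forms as an operator with constant coefficients in the frame $e^i$, so $\operatorname{sym}$ also commutes with every pointwise algebraic operation built from $J$---in particular with $J^*$---sends real $(p,p)$-forms to real $(p,p)$-forms, and, by convexity of the cone of positive $(p,p)$-forms, sends positive $(p,p)$-forms to positive $(p,p)$-forms (at each point $\operatorname{sym}(\beta)$ is, in the frame, the convex $\mu$-average of the pointwise values of $\beta$).

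Granting this, the statement follows. If $g$ is a compatible SKT metric with fundamental form $\sigma$, put $\tilde\sigma\coloneqq\operatorname{sym}(\sigma)$: it is an invariant, positive, real $(1,1)$-form with $dJ^*d\tilde\sigma=\operatorname{sym}(dJ^*d\sigma)=0$, so $\tilde g\coloneqq\tilde\sigma(\any,J\any)$ is an invariant compatible SKT metric. If instead $g$ is balanced one cannot simply average $g$ or $\sigma$: the codifferential formulation of the balanced condition is not linear in $g$, and although $d(\sigma^{n-1})=0$ is linear in $\sigma^{n-1}$, the map $\operatorname{sym}$ is not multiplicative, so $\operatorname{sym}(\sigma^{n-1})\neq(\operatorname{sym}\sigma)^{n-1}$ in general. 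Instead apply $\operatorname{sym}$ to $\psi\coloneqq\sigma^{n-1}$ directly: then $\tilde\psi\coloneqq\operatorname{sym}(\psi)$ is an invariant positive $(n-1,n-1)$-form with $d\tilde\psi=\operatorname{sym}\bigl(d(\sigma^{n-1})\bigr)=0$. By the classical fact that on a complex vector space of complex dimension $n$ the map $\omega\mapsto\omega^{n-1}$ restricts to a bijection from positive $(1,1)$-forms onto positive $(n-1,n-1)$-forms, there is a unique positive $(1,1)$-form $\omega$ with $\omega^{n-1}=\tilde\psi$; by the uniqueness and the $\mathrm{GL}$-equivariance of this $(n-1)$-st root, $\omega$ is again invariant, and $d(\omega^{n-1})=d\tilde\psi=0$, so $\omega$ is the fundamental form of an invariant compatible balanced metric.

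The step I expect to be the main obstacle is verifying that $\operatorname{sym}$ commutes with $d$: this is exactly where unimodularity of $G$ enters (through the existence of the invariant finite measure $\mu$, equivalently the divergence-freeness of left-invariant vector fields together with Stokes' theorem on the closed manifold $M$). Once this is established the SKT case is immediate, and the only additional ingredient needed for the balanced case is the passage back from an $(n-1,n-1)$-form to a $(1,1)$-form via the $(n-1)$-st root, which compensates for $\operatorname{sym}$ failing to be multiplicative.
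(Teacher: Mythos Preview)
Your argument is correct and is precisely the averaging construction the paper intends: the paper gives no proof of its own but states the proposition with a \qed and attributes it to \cite{FiG} and \cite{U}, where the same symmetrisation trick (averaging the coefficients in a left-invariant coframe, using unimodularity to get commutation with $d$, and taking the Michelsohn $(n-1)$-st root in the balanced case) is used. So your proposal and the paper's (cited) approach coincide.
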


Now consider left-invariant structures on the simply-connected Lie
group \( G \) directly and identify them with the corresponding
structures on the associated Lie algebra~\( \mfg \).
We may then also speak of \emph{Hermitian}, \emph{balanced} or
\emph{SKT} Lie algebras.
Throughout we will assume \( \mfg \ne 0 \) and write
\( \dim\mfg = 2n \) for the dimension of \( \mfg \) over~\( \bR \).

\begin{definition}
  \label{def:V}
  For a Hermitian Lie algebra \( (\mfg, g, J) \) we introduce the
  following vector subspaces.
  We set \( \derg_J = \derg\cap J\derg \) to be the maximal complex
  subspace of the derived algebra~\( \derg = [\mfg,\mfg]\).
  Moreover, we let \( \derg_r \) be the orthogonal complement of
  \( \derg_J \) in \( \derg \) and then define
  \( V_r \coloneqq \derg_r\oplus J\derg_r \).
  Note that this direct sum is not orthogonal in general.
  Observe that now
  \begin{equation*}
    \derg+J\derg=\derg_J\oplus V_r
  \end{equation*}
  and define \( V_J \) to be the orthogonal complement of
  \( \derg+J\derg \) in \( \mfg \).
  We then have a vector space orthogonal direct sum decomposition
  \begin{equation}
    \label{eq:g-V}
    \mfg=\derg_J\oplus V_r\oplus V_J.
  \end{equation}
  of \( \mfg \) by spaces that are preserved by~\( J \).
  We define \( s,r,\ell \in \bN \) by
  \begin{equation*}
    2s \coloneqq \dim(\derg_J),\qquad 2r \coloneqq \dim(V_r),\qquad
    2\ell \coloneqq \dim(V_J)
  \end{equation*}
  and use this notation throughout the article.
  Note that so \( s+r+\ell = n \) and that these numbers depend only
  on \( (\mfg,J) \), and not on the metric \( g \), since
  \( 2r = \dim(V_r) = \dim(\derg+J\derg)-\dim(\derg_J) =
  \dim(\derg+J\derg)-2s \) and \( \ell =n-r-s \).

  We call \( (\mfg,g,J) \) or \( (\mfg,J) \) of
  \begin{enumerate}
  \item \emph{pure type~I} if \( \derg_J=0 \), i.e.\ \( \derg \) is
    totally real,
  \item \emph{pure type~II} if \( V_r=0 \) or, equivalently,
    \( \derg_r=0 \), i.e.\ if \( \derg \) is complex, and
  \item \emph{pure type~III} if \( V_J=0 \), i.e.\ if
    \( \mfg=\derg+J\derg \).
  \end{enumerate}
  If one of these conditions hold, we simply say that \( (\mfg,g,J) \)
  or \( (\mfg,J) \) is of \emph{pure type}.
\end{definition}

\subsection{Complex shears of \texorpdfstring{\( \bR^{2n} \)}{R 2n}}
\label{subsec:complexshear}

Although the shear construction is defined for arbitrary manifolds, we
will just need the version for Lie groups and algebras as presented
in~\cite{FrSw2}.
The motivating example is as follows.
Let \( H,P,K \) be simply-connected Lie groups with
\( \dim(H) = \dim(K) \) and whose associated Lie algebras are related
by surjective Lie algebra homomorphisms
\( \mfh\leftarrow \mfp \rightarrow \mfk \) with Abelian kernels.
We then have two Abelian Lie algebras \( \hat{\mfa}_P \), \( \mfa_P \)
of the same dimension, and two Lie algebra extensions of the form
\begin{equation*}
  \mfh \twoheadleftarrow \mfp \hookleftarrow \mfa_P
  \quad\text{and}\quad
  \hat{\mfa}_P \hookrightarrow \mfp \twoheadrightarrow \mfk.
\end{equation*}
The Lie algebra \( \mfp \) and the \emph{shear algebra} \( \mfk \) may
constructed from certain ``shear data'' on~\( \mfh \): two Abelian Lie
algebras \( \mfa_{H} \) and \( \mfa_{P} \), a Lie algebra monomorphism
\( \xi\colon \mfa_H\rightarrow \mfh \), a two-form
\( \omega\in \Lambda^2 \mfh^*\otimes \mfa_P \) on \( \mfh \) with
values in~\( \mfa_P \), a representation
\( \eta\in \mfh^*\otimes \Hom(\mfa_P) \) and a Lie algebra isomorphism
\( a\colon \mfa_H\rightarrow \mfa_P \), with certain compatibility.
The vector space \( \mfp \) is then \( \mfh\oplus \mfa_P \) and the
Lie bracket is specified by
\( [X,Y]_{\mfp} \coloneqq [X,Y]_{\mfh} - \omega(X,Y) \),
\( [X,Z]_{\mfp} \coloneqq \eta(X)(Z) \), for \( X,Y\in \mfh \) and
\( Z\in \mfa_P \), and that \( \mfa_{P} \) is an Abelian ideal.
Writing \( \rho\colon \mfa_P\rightarrow \mfp=\mfh\oplus \mfa_P \) for
the natural inclusion, the map
\begin{equation*}
  \mathring{\xi}\colon \mfa_H\rightarrow \mfp,\qquad \mathring{\xi}
  \coloneqq \xi+\rho\circ a
\end{equation*}
is a Lie algebra monomorphism and \( \mathring{\xi}(\mfa_H) \) is an
ideal in~\( \mfp \).
The shear algebra~\( \mfk \) is then the
quotient~\( \mfp/\mathring{\xi}(\mfa_H) \).
As vector spaces, \( \mfh \)~is a summand of
\( \mfp = \mfh\oplus \mfa_P \) and is isomorphic the shear
algebra~\( \mfk = \mfp/\mathring{\xi}(\mfa_{H}) \) via the projection.
In this way, tensors on \( \mfh \) may be transferred to tensors
on~\( \mfk \).

For studying two-step solvable algebras, we may take
\( \mfh = \bR^N \) Abelian, \( \mfa = \mfa_{H} = \mfa_{P} \) a
subalgebra of~\( \mfh = \bR^{N} \), \( \xi = \inc \), the inclusion
map, and \( a = \id_{\mfa} \), see~\cite{FrSw2}.
The remaining shear data is
\( \omega\in \Lambda^2 \mfh^*\otimes \mfa \) and
\( \eta \in \mfh^* \otimes \Hom(\mfa) \).
Compatibility gives \( \eta(X)(Z)=-\omega(X,Y) \) for \( X\in \mfh \),
\( Z\in \mfa \) and \( \omega|_{\Lambda^2\mfa^*}=0 \).
Thus \( \mfa \) and \( \omega \) determine the entire shear data.

\begin{definition}
  A pair \( (\mfa,\omega) \) consisting of a subspace \( \mfa \) of
  \( \bR^N \) and a two-form
  \( \omega\in \Lambda^2(\bR^N)^* \otimes \mfa \) with
  \( \omega|_{\Lambda^2 \mfa}=0 \) is called \emph{pre-shear data} (on
  \( \bR^N \)).
\end{definition}

Now suppose that \( N=2n \).
As \( \bR^{2n} \) is Abelian any \( J \in \End(\bR^{2n}) \) with
\( J^{2} = -\id \) defines an (integrable) complex structure.
Combining \cite[Lemmas 2.1 and~3.1]{FrSw2} and
\cite[Proposition~2.5]{FrSw1} gives

\begin{proposition}
  \label{pro:complexsheardata}
  Let \( J \) be a complex structure on the Lie algebra \( \bR^{2n} \)
  and let \( (\mfa,\omega) \) be pre-shear data on \( \bR^{2n} \).
  Then the shear \( (\mfg,J_{\mfg}) \) of~\( (\bR^{2n},J) \) is a Lie
  algebra~\( \mfg \) endowed with a complex structure \( J_{\mfg} \)
  if and only if
  \begin{equation}
    \label{eq:complexsheardata}
    \cA(\omega(\omega(\any,\any),\any))=0,\qquad
    J^*\omega=\omega-J\circ J.\omega,
  \end{equation}
  where \( J.\omega = - \omega(J\any,\any) - \omega(\any,J\any) \) and
  \( \cA \) is anti-symmetrisation.
  Moreover, every two-step solvable Lie algebra \( \mfg \) with a
  complex structure~\( J_{\mfg} \) may be obtained in this way. \qed
\end{proposition}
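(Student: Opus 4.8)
The plan is to reduce the statement to the three results quoted just before it, \cite[Lemmas~2.1 and~3.1]{FrSw2} and \cite[Proposition~2.5]{FrSw1}, after specialising the general shear construction to the situation at hand: \( \mfh = \bR^{2n} \) Abelian with \( \mfa = \mfa_H = \mfa_P \), \( \xi = \inc \), \( a = \id_\mfa \). In that case the shear datum reduces exactly to the pre-shear datum \( (\mfa,\omega) \) — the general compatibility relations determine \( \eta \) from \( \omega \) and force \( \omega|_{\Lambda^2\mfa}=0 \) — the bracket on \( \mfp = \bR^{2n}\oplus\mfa_P \) depends only on \( \omega \), with \( \mfa_P \) an Abelian ideal, and the vector-space projection \( \mfp\to\mfk \) identifies \( \mfk \) with \( \bR^{2n} \), carrying the linear map \( J \) (which satisfies \( J^2=-\id \) on the Abelian \( \bR^{2n} \)) to an endomorphism \( J_\mfg \) with \( J_\mfg^2=-\id \).

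\emph{The algebra condition.} First I would check that \( \mfp \), hence its quotient \( \mfg=\mfk \), is a Lie algebra if and only if the first equation of~\eqref{eq:complexsheardata} holds. Since \( \bR^{2n} \) is Abelian and \( \mfa_P \) is an Abelian ideal, a Jacobi triple with two or more entries in \( \mfa_P \) vanishes, and one with a single such entry reduces to \( \omega|_{\Lambda^2\mfa}=0 \); the only substantive case is \( X,Y,Z\in\bR^{2n} \), where \( [[X,Y]_\mfp,Z]_\mfp = \pm\,\omega(\omega(X,Y),Z) \), so that the Jacobi identity becomes \( \cA(\omega(\omega(\any,\any),\any))=0 \). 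As this is a condition on \( \bR^{2n} \) alone it passes to the quotient; this is the content of \cite[Lemma~2.1]{FrSw2} specialised to the present setting.

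\emph{Integrability of \( J_\mfg \).} Assuming \( \mfg \) is a Lie algebra, I would then apply \cite[Lemma~3.1]{FrSw2} together with \cite[Proposition~2.5]{FrSw1} to decide when \( J_\mfg \) is integrable. Since the Nijenhuis tensor of \( J \) on the Abelian \( \bR^{2n} \) vanishes identically, that of \( J_\mfg \) is controlled entirely by the interaction of \( J \) with \( \omega \); expanding it and rewriting in terms of \( J.\omega = -\omega(J\any,\any)-\omega(\any,J\any) \) and \( J^*\omega = \omega(J\any,J\any) \) shows that it vanishes exactly when \( J^*\omega = \omega - J\circ J.\omega \). This step is the main obstacle: the general Nijenhuis-tensor formula for a sheared complex structure must be specialised carefully to the Abelian base and the sign conventions hidden in \( J.\omega \) and \( J^*\omega \) tracked, in order to recover precisely the displayed identity; combined with the previous step this yields the claimed equivalence.

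\emph{The ``moreover''.} Finally, given any two-step solvable \( (\mfg,J_\mfg) \) of dimension \( 2n \), take \( \bR^{2n} \) to be the underlying real vector space of \( \mfg \) with zero bracket, keep the linear map \( J\coloneqq J_\mfg \), put \( \mfa\coloneqq\derg = [\mfg,\mfg] \), and let \( \omega\in\Lambda^2(\bR^{2n})^*\otimes\mfa \) be the Lie bracket of \( \mfg \) (up to sign). Two-step solvability makes \( \derg \) Abelian, so \( \omega|_{\Lambda^2\mfa}=0 \) and \( (\mfa,\omega) \) is pre-shear data; unwinding the shear construction with these choices returns \( \mfg \) with its bracket and complex structure. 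Conversely, any shear as above is automatically two-step solvable, since \( \mfp=\bR^{2n}\oplus\mfa_P \) has the Abelian ideal \( \mfa_P \) with Abelian quotient, and this property descends to \( \mfk \).
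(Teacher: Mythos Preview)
Your proposal is correct and is exactly the paper's approach: the paper gives no proof beyond the sentence preceding the proposition, namely that the result follows by combining \cite[Lemmas~2.1 and~3.1]{FrSw2} with \cite[Proposition~2.5]{FrSw1}, and you have simply unpacked how these three ingredients fit together. Your attribution of the first condition to the Jacobi identity on~$\mfp$, the second to the vanishing of the Nijenhuis tensor of~$J_\mfg$, and the ``moreover'' clause to the evident choice $\mfa=\derg$, $\omega=-[\any,\any]_\mfg$ is all correct.
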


\begin{definition}
  Pre-shear data \( (\mfa,\omega) \) on \( (\bR^{2n},J) \) that
  satisfies~\eqref{eq:complexsheardata} will be called \emph{complex
  shear data}.
\end{definition}

To describe certain consequences of~\eqref{eq:complexsheardata}, we
need some further notation.
On~\( \bR^{2n} \), let \( (\mfa,\omega) \) be pre-shear data and let
\( J \)~be a complex structure.
Any compatible metric~\( g \) on \( \bR^{2n} \) then makes
\( (\bR^{2n},g,J) \) into a K\"ahler Lie algebra.

Set \( \mfa_J \coloneqq \mfa\cap J\mfa \), and let \( \mfa_r \) be the
orthogonal complement of \( \mfa_J \) in~\( \mfa \).
Put \( U_r \coloneqq \mfa_r \oplus J\mfa_r \) and let \( U_J \) be the
orthogonal complement of \( \mfa+J\mfa=\mfa_J\oplus U_r \)
in~\( \bR^{2n} \).

For each \( X\in \mfa \), we define
\begin{equation*}
  A_X \coloneqq \omega(JX,\any)|_{\mfa}\in \End(\mfa)
\end{equation*}
and decompose \( A_X \) according to the splitting
\( \mfa=\mfa_J\oplus \mfa_r \) as
\begin{equation*}
  \begin{split}
    A_X
    &=K_X+G_X+H_X+F_X\\
    &\in \End(\mfa_J) \oplus \Hom(\mfa_J,\mfa_r)
      \oplus \Hom(\mfa_r,\mfa_J) \oplus \End(\mfa_r).
  \end{split}
\end{equation*}
We have associated bilinear maps
\( f\colon \mfa_r\otimes \mfa_r\rightarrow \mfa_r \) and
\( h\colon \mfa_r\otimes \mfa_r\rightarrow \mfa_J \) given by
\begin{equation*}
  f(X,\hat{X}) \coloneqq F_{X}(\hat{X}),\qquad h(X,\hat{X}) \coloneqq
  H_{X}(\hat{X}).
\end{equation*}
Moreover, for each \( Z\in U_J \), we set
\begin{equation*}
  B_Z \coloneqq \omega(Z,\any)|_{\mfa} \in \End(\mfa).
\end{equation*}

\begin{lemma}
  \label{le:conscomplexsheardata}
  On the Abelian Lie algebra \( \bR^{2n} \), let \( (g,J) \) be a
  K\"ahler structure and let \( (\mfa,\omega) \) be complex shear
  data.  Then
  \begin{enumerate}
  \item[\textup{(i)}] \( G_X=0 \) for all \( X\in \mfa_r \),
  \item[\textup{(ii)}] \( [J,K_X]=0 \) for all \( X\in \mfa_r \),
  \item[\textup{(iii)}] \( f \) is symmetric,
  \item[\textup{(iv)}] for all \( X,\hat{X}\in \mfa_r \), we have
    \begin{equation*}
      \omega(JX,J\hat{X})\in \mfa_J \quad\text{and}\quad
      \omega(JX,J\hat{X})
      = J(h(X,\hat{X})-h(\hat{X},X)),
    \end{equation*}
  \item[\textup{(v)}] for all \( \tilde{X},\hat{X}\in \mfa_r \) and
    all \( \tilde{Z},\hat{Z}\in U_J \), we have
    \begin{equation*}
      [A_{\tilde{X}}, A_{\hat{X}}] = 0 = [A_{\tilde{X}},
      B_{\tilde{Z}}] = [B_{\tilde{Z}},B_{\hat{Z}}]
      \quad\text{and}\quad [K_{\tilde{X}}, K_{\hat{X}}]=0.
    \end{equation*}
    Moreover,
    \begin{equation*}
      \omega^{r}(JZ,JX) = \omega^{r}(Z,X),
    \end{equation*}
    where \( \omega^{r} \) is the part of \( \omega \) that takes
    values in~\( \mfa_{r} \).
  \end{enumerate}
\end{lemma}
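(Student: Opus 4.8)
The plan is to unwind the two conditions in~\eqref{eq:complexsheardata} and feed them into the definitions of $A_X$, $B_Z$, $f$, $h$, $K_X$, $G_X$, $H_X$, $F_X$. The first condition $\cA(\omega(\omega(\any,\any),\any))=0$ is the Jacobi-type identity coming from two-step solvability; the second, $J^*\omega=\omega-J\circ J.\omega$, is an algebraic constraint tying $\omega$ to $J$. Since $(g,J)$ is K\"ahler on the \emph{Abelian} $\bR^{2n}$, the metric interacts with $J$ only through the orthogonality of the decomposition $\mfa_J\oplus\mfa_r$ (and $U_r$, $U_J$), so essentially everything reduces to linear algebra with $J$ and $\omega$.

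First I would extract pointwise consequences of $J^*\omega=\omega-J\circ J.\omega$, i.e.\ $\omega(JX,JY)=\omega(X,Y)-J\bigl(\omega(JX,Y)+\omega(X,JY)\bigr)$. Applying this with $X,\hat X\in\mfa_r$ and comparing $\mfa_J$- and $\mfa_r$-components: the $\mfa_r$-part gives statement~(v)'s relation $\omega^r(JZ,JX)=\omega^r(Z,X)$ after the analogous substitution, and by polarisation/symmetrisation one reads off that $G_X=0$ for $X\in\mfa_r$, which is~(i), and that $f$ is symmetric, which is~(iii); for~(iv) one evaluates $\omega(JX,J\hat X)$ directly via the identity, noting $\omega(X,\hat X)=0$ since $\omega|_{\Lambda^2\mfa}=0$, so $\omega(JX,J\hat X)=-J\bigl(\omega(JX,\hat X)+\omega(X,J\hat X)\bigr)=-J\bigl(A_X\hat X - A_{\hat X}X\bigr)$ restricted appropriately; taking the $\mfa_J$-component of $A_X\hat X - A_{\hat X}X$ is exactly $h(X,\hat X)-h(\hat X,X)$, and one must check the $\mfa_r$-component vanishes, which is again~(i) together with symmetry of $f$. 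Statement~(ii), $[J,K_X]=0$, should come from applying the twisting identity with one argument in $\mfa_J$ (so $J$ preserves that slot) and tracking how $J$ threads through $K_X=A_X|_{\mfa_J}$ composed with the projection to $\mfa_J$.

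Next I would turn to the first condition in~\eqref{eq:complexsheardata} to get the commutator relations in~(v). Anti-symmetrising $\omega(\omega(X,Y),Z)$ and pairing against suitable arguments — taking $X=JY'$, $Z=JX'$ with $Y',X'\in\mfa$ or in $U_J$ — should produce $\omega(\omega(\text{stuff})\cdot,\any)|_\mfa$ terms that reorganise into $[A_{\tilde X},A_{\hat X}]$, $[A_{\tilde X},B_{\tilde Z}]$, $[B_{\tilde Z},B_{\hat Z}]$ acting on $\mfa$; the cyclic sum forces these to vanish. The remaining identity $[K_{\tilde X},K_{\hat X}]=0$ then follows by restricting $[A_{\tilde X},A_{\hat X}]=0$ to $\mfa_J$ and using~(i) (so the off-diagonal blocks $G,H$ don't contaminate the $\mfa_J$-$\mfa_J$ block), together with~(ii) if needed to keep things inside $\mfa_J$. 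Care is needed because $A_X$ is only defined for $X\in\mfa$ while $B_Z$ is defined for $Z\in U_J$, and $\omega$ of two $U_J$-vectors need not land in $\mfa$; I would handle this by always projecting onto $\mfa$ at the end and using $\omega|_{\Lambda^2\mfa}=0$ to kill spurious terms.

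The main obstacle I anticipate is bookkeeping: the decomposition $\mfa=\mfa_J\oplus\mfa_r$ together with the four-block structure of $A_X$ means each application of the twisting identity spawns several components to match, and one has to be careful that $J$ need not preserve $\mfa_r$ (only $\mfa_J$, $U_r$ and the whole of $\mfa+J\mfa$ are $J$-invariant), so e.g. $JX$ for $X\in\mfa_r$ lives in $U_r$, not in $\mfa$. Keeping track of which projections are implicit in $A_X$, $K_X$, $h$, $f$ — and in the mixed expressions like $\omega(JX,\hat X)$ where the first slot is outside $\mfa$ — is where errors creep in. A clean way to organise this is to fix an orthonormal $J$-adapted basis once and compute all the block identities matrix-wise, deriving (i)--(iv) first and then using them to simplify the Jacobi identity down to the bare commutator statements in~(v).
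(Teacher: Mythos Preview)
Your plan is correct and matches the paper's approach: parts (i)--(iv) and the final identity of~(v) come from the integrability condition $J^*\omega=\omega-J\circ J.\omega$ exactly as you outline (this is what the paper imports from \cite[Lemma~3.3]{FrSw2}), and the commutator relations in~(v) come from evaluating $\cA(\omega(\omega(\any,\any),\any))=0$ on one element of~$\mfa$ and two of $J\mfa_r\oplus U_J$, then using (i) to pass from $[A_{\tilde X},A_{\hat X}]=0$ to $[K_{\tilde X},K_{\hat X}]=0$ via block upper-triangularity. Two small corrections: your pointwise identity has a sign slip (it should read $\omega(JX,JY)=\omega(X,Y)+J\bigl(\omega(JX,Y)+\omega(X,JY)\bigr)$, though this does not affect the argument), and your worry that $\omega$ of two $U_J$-vectors might not land in $\mfa$ is unfounded, since $\omega\in\Lambda^2(\bR^{2n})^*\otimes\mfa$ by definition---this removes the need for the extra projections you anticipate.
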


\begin{proof}
  Parts (i)--(iv) and the final part of~(v) may be found
  in~\cite[Lemma 3.3]{FrSw2}.
  The rest of part~(v) follows from the first equation
  in~\eqref{eq:complexsheardata} evaluated on one element
  of~\( \mfa \) and two elements of \( J\mfa_r\oplus U_J \), and from
  the fact that the endomorphism \( A_X \) is block upper triangular
  by part~(i).
\end{proof}

Finally, we recall the following result from~\cite[Lemma 3.1]{FrSw2}.

\begin{lemma}
  \label{le:SKTsheardata}
  On the Abelian Lie algebra \( \bR^{2n} \), let \( (g,J) \) be a
  K\"ahler structure and let \( (\mfa,\omega) \) be complex shear
  data.
  Then the shear \( (\mfg, g_{\mfg}, J_{\mfg}) \) of
  \( (\bR^{2n},g,J) \) is an SKT Lie algebra if and only if
  \begin{equation}
    \label{eq:SKTsheardata}
    \cA\paren[\Big]{g(J^*\omega(\any,\any),
    \omega(\any,\any)) +
    2g(J^*\omega(\omega(\any,\any),\any),\any)} = 0,
  \end{equation}
  where \( \cA \) is anti-symmetrisation. \qed
\end{lemma}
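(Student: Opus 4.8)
The plan is to compute the three-form \( d_{\mfg}J^{*}d_{\mfg}\sigma \) directly from the Chevalley--Eilenberg differential of the shear algebra~\( \mfg \) and to read off when it vanishes. The crucial simplification is that the shear construction transfers the metric and complex structure unchanged, so that, as tensors on the common underlying vector space \( \bR^{2n}=\mfg \), one has \( g_{\mfg}=g \), \( J_{\mfg}=J \) and \( \sigma_{\mfg}=\sigma\coloneqq g(J\any,\any) \); only the Lie bracket changes, and in the two-step solvable case it is \( [X,Y]_{\mfg}=\pm\omega(X,Y) \) with \( \omega \) valued in the Abelian ideal~\( \mfa \). Since \( \bR^{2n} \) is Abelian the original \( \sigma \) is closed, so every term of \( d_{\mfg}\sigma \) and of its further differential arises from inserting brackets, i.e.\ is polynomial in~\( \omega \); the expected outcome is the quadratic-plus-cubic expression~\eqref{eq:SKTsheardata}.

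Concretely, I would first compute, using only \( \sigma(U,V)=g(JU,V) \),
\[
  d_{\mfg}\sigma(X,Y,Z)=\mp\bigl(g(J\omega(X,Y),Z)+g(J\omega(Y,Z),X)+g(J\omega(Z,X),Y)\bigr),
\]
the right-hand side being the cyclic sum. Applying \( J^{*} \) (acting on a three-form by precomposition with \( J \) in each slot) and using compatibility \( g(J\any,J\any)=g(\any,\any) \) turns \( g(J\omega(JX,JY),JZ) \) into \( g(\omega(JX,JY),Z)=g(J^{*}\omega(X,Y),Z) \), where \( J^{*}\omega(X,Y)=\omega(JX,JY) \) as in the lemmas above; hence
\[
  \gamma\coloneqq J^{*}d_{\mfg}\sigma,\qquad \gamma(X,Y,Z)=\mp\bigl(g(J^{*}\omega(X,Y),Z)+g(J^{*}\omega(Y,Z),X)+g(J^{*}\omega(Z,X),Y)\bigr).
\]
Then \( d_{\mfg}\gamma(X_{0},X_{1},X_{2},X_{3})=\pm\sum_{i<j}(-1)^{i+j}\gamma(\omega(X_{i},X_{j}),X_{k},X_{l}) \) with \( \{k,l\} \) the complementary pair; substituting the formula for \( \gamma \) produces, for each pair \( (i,j) \), one ``outer'' term \( g(J^{*}\omega(X_{k},X_{l}),\omega(X_{i},X_{j})) \) and two ``inner'' terms of the form \( \pm g(J^{*}\omega(\omega(X_{i},X_{j}),X_{\bullet}),X_{\circ}) \). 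After the full alternating sum over \( X_{0},\dots,X_{3} \) the outer terms assemble into \( \cA\,g(J^{*}\omega(\any,\any),\omega(\any,\any)) \), while the two families of inner terms do not cancel but reinforce — one is obtained from the other by the transposition \( X_{k}\leftrightarrow X_{l} \), whose sign is compensated by the sign from the Chevalley--Eilenberg formula — and so combine into \( 2\,\cA\,g(J^{*}\omega(\omega(\any,\any),\any),\any) \). Thus \( d_{\mfg}J^{*}d_{\mfg}\sigma=0 \) is exactly~\eqref{eq:SKTsheardata}. (Neither the Jacobi condition \( \cA(\omega(\omega(\any,\any),\any))=0 \) nor the integrability in~\eqref{eq:complexsheardata} is actually used in this algebraic identity; they enter only in ensuring that \( (\mfg,J_{\mfg}) \) is a genuine complex Lie algebra, so that the SKT condition is meaningful, though either may be invoked to tidy intermediate expressions.)

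The main obstacle is purely bookkeeping: determining the sign in \( [X,Y]_{\mfg}=\pm\omega(X,Y) \) from the shear data, and then carrying the normalisation of \( \cA \) together with the alternating Chevalley--Eilenberg signs carefully enough that the coefficient of the cubic term comes out as \( +2 \) and with the correct sign relative to the quadratic term. I would handle this by evaluating \( d_{\mfg}\gamma \) on four generic vectors \( X_{0},\dots,X_{3} \) and grouping the resulting terms according to how many of the \( X_{i} \) sit inside an inner~\( \omega \), which keeps the (non-)cancellations transparent; alternatively one can record \( d_{\mfg}\sigma \) and \( \gamma \) in a basis and differentiate once more.
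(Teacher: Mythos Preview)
Your proposal is correct. The paper does not actually prove this lemma: it is stated with a \qed and simply recalled from \cite[Lemma~3.1]{FrSw2}. Your direct Chevalley--Eilenberg computation is a valid self-contained argument; it is essentially the same method the paper uses for the companion balanced lemma (Lemma~\ref{le:balancedsheardata}), where \( d_{\mfg}\sigma^{n-1} \) is computed via the shear formula \( d_{\mfg}\alpha = d_{\bR^{2n}}\alpha - (\inc\hook\alpha)\wedge\omega \) from \cite[Corollary~3.11]{FrSw1}. That formula, applied twice with \( J^{*} \) in between, would give a slightly more packaged derivation than your bare CE expansion, but the content is identical. Your remark that neither the Jacobi identity nor the integrability condition is needed for the algebraic identity itself is accurate and worth keeping; the sign ambiguity you flag is resolved in the paper's conventions by \( [\any,\any]_{\mfg} = -\omega \) (see the Remark at the end of~\S\ref{subsec:complexshear}).
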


\begin{remark}
  In the rest of the article, when we consider a two-step solvable
  (almost) Hermitian Lie algebra \( (\mfg, g_{\mfg}, J_{\mfg}) \), we
  will regard it as being obtained by appropriate pre-shear data
  \( (\mfa,\omega) \) from a flat K\"ahler structure \( (g,J) \)
  on~\( \bR^{2n} \).
  The identification of \( \mfg = \bR^{2n} \) as vector spaces,
  identifies \( g_{\mfg} \) with \( g \) and \( J_{\mfg} \)
  with~\( J \).
  Note that this also gives \( -\omega = [\any,\any]_{\mfg} \) and
  that \( \derg = \im\omega \).
  We can then without loss of generality identify \( \mfa \)
  with~\( \derg \).
\end{remark}

\subsection{Lie algebra notation}
\label{sec:lie-algebra-notation}

When we need to specify concrete Lie algebras, we will often use
Salamon's notation~\cite{Sa}.
If \( e_{1},\dots,e_{n} \) is a basis for~\( \mfg \) with dual basis
\( e^{1},\dots,e^{n} \), then
\( de^{i}(e_{j},e_{k})= - e^{i}([e_{j},e_{k}]) \) and the algebra is
specified by listing the differentials \( (de^{1},\dots,de^{n}) \),
but writing for example
\( 3(e^{1}\wedge e^{2}-e^{4}\wedge e^{6}) = 3(e^{12}-e^{46}) \) as
\( 3.(12-46) \).

In Table~\ref{table_LAs}, we use this notation to list the Lie
algebras of this article that have standard names, with notation
coming from~\cite{Bi,ABDO,Mu1,Mu2,Tu}.

\begin{table}
  \centering
  \begin{tabular}{@{\vrule width 0pt height 2.5ex depth
    1.5ex}*{4}{l}@{}}
    \toprule
    \( \mfg \) & dim & differentials & \\
    \midrule
    \( \aff_{\bR}\) & \( 2 \) & \( (0,21) \) & \\
    \( \mfh_{3} \) & \( 3 \) & \( (0,0,21) \) & \\
    \( \mfr'_{3,\lambda} \) & \( 3 \)
                     & \( (0,\lambda.21+31,-21+\lambda.31) \)
                                     & \( \lambda\geqslant 0 \) \\
    \( \mfr_{4,\mu,\lambda} \) & \( 4 \)
                     & \( (0,21,\mu.31,\lambda.41) \)
                                     & \( 0<\abs{\lambda}\leqslant
                                       \abs{\mu}\leqslant 1 \) \\
    \( \mfr'_{4,\mu,\lambda} \) & \( 4 \)
                     & \( (0,\mu.21,\lambda.31+41,-31+\lambda.41) \)
                                     & \( \mu>0 \) \\
    \( \mfg_{5,17}^{\alpha,\beta,\gamma} \) & \( 5 \)
                     & \(
                       \begin{array}[t]{@{}l@{}}
                         (0,\alpha.21+31,-21+\alpha.31,\\
                         \qquad\beta.41+\gamma.51,
                         -\gamma.41+\alpha.51)
                       \end{array} \)
               & \( \alpha \geqslant 0 \), \( \gamma\neq 0 \) \\
    \( \mfg_{6,11}^{\alpha,\beta,\gamma,\delta} \) & \( 6 \)
                     & \(
                       \begin{array}[t]{@{}l@{}}
                         (0,\alpha.21,\beta.31+41,-31+\beta.41,\\
                         \qquad\quad
                         \gamma.51+\delta.61,-\delta.51+\gamma.61)
                       \end{array} \)
               & \( \alpha \delta\neq 0 \) \\
    \( N_{6,1}^{\alpha,\beta,\gamma,\delta} \) & 6
                     & \(
                       \begin{array}[t]{@{}l@{}}
                         (\alpha.15+\beta.16, \gamma.25+\delta.26,\\
                         35, 46, 0, 0)
                       \end{array} \)
               & \( \alpha\beta\ne0 \), \( (\gamma,\delta) \ne (0,0)
                 \)\\
    \( N_{6,14}^{\alpha,\beta,\gamma} \) & 6
                     & \(
                       \begin{array}[t]{@{}l@{}}
                         (\alpha.15+\beta.16,26,\\
                         \qquad\gamma.35-45,\gamma.45+35,0,0)
                       \end{array} \)
               & \(\alpha\beta\neq 0\) \\
    \bottomrule
  \end{tabular}
  \medskip
  \caption{Notation for certain Lie algebras}
  \label{table_LAs}
\end{table}

\section{Balanced and K\"ahler geometry}
\label{sec:balancedkahler}

\subsection{Balanced Lie algebras}
\label{subsec:balanced}

\begin{lemma}
  \label{le:balancedsheardata}
  Let \( (\bR^{2n},g,J) \) be a flat K\"ahler structure on the Lie
  algebra \( \bR^{2n} \) with associated K\"ahler form~\( \sigma \).
  Then the shear \( (\mfg,g_{\mfg},J_{\mfg}) \) of
  \( (\bR^{2n},g,J) \) by complex shear data \( (\mfg,\omega) \) is
  balanced if and only if
  \begin{equation}
    \label{eq:balancedsheardata}
    \cA\paren[\big]{\sigma(\omega(\any,\any),\any)\wedge \sigma^{n-2}}
    = 0.
  \end{equation}
\end{lemma}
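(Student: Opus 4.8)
The plan is to compute \( d(\sigma_{\mfg}^{n-1}) \) directly on \( \mfg \) and show it vanishes precisely under~\eqref{eq:balancedsheardata}. Since the shear identifies \( \mfg \) with \( \bR^{2n} \) as a vector space, with \( g_{\mfg} = g \), \( J_{\mfg} = J \) and hence \( \sigma_{\mfg} = \sigma \) as forms, the only thing that changes is the differential: for a \( p \)-form \( \alpha \) one has \( d_{\mfg}\alpha = d_{0}\alpha + (\text{correction from } \omega) \), and since \( (\bR^{2n},g,J) \) is flat K\"ahler we have \( d_{0}\sigma = 0 \), so \( d_{0} \) annihilates every power \( \sigma^{k} \). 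Thus \( d_{\mfg}(\sigma^{n-1}) \) is entirely the \( \omega \)-correction term. Concretely, recalling from the Remark that \( -\omega = [\any,\any]_{\mfg} \), the Chevalley--Eilenberg differential acts on a one-form \( \beta \) by \( (d_{\mfg}\beta)(X,Y) = \beta(\omega(X,Y)) \) (using \( -\omega \) for the bracket and the sign convention \( de^{i}(e_{j},e_{k}) = -e^{i}([e_{j},e_{k}]) \)); I would first record the corresponding formula on \( \sigma \), namely \( (d_{\mfg}\sigma)(X,Y,Z) = -\cA\,\sigma(\omega(X,Y),Z) \) up to a universal nonzero constant, and observe \( d_{\mfg}\sigma = -\cA(\sigma(\omega(\any,\any),\any)) \) as a \( 3 \)-form.

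Next I would use the derivation property of \( d_{\mfg} \) on the exterior algebra: \( d_{\mfg}(\sigma^{n-1}) = (n-1)\,(d_{\mfg}\sigma)\wedge \sigma^{n-2} \), because \( \sigma \) is a \( 2 \)-form so the signs in the Leibniz rule are all \( + \). Substituting the expression for \( d_{\mfg}\sigma \) gives
\begin{equation*}
  d_{\mfg}(\sigma^{n-1}) = -(n-1)\,\cA\paren[\big]{\sigma(\omega(\any,\any),\any)}\wedge \sigma^{n-2},
\end{equation*}
and since the anti-symmetrisation \( \cA \) acting on a \( 3 \)-form followed by wedging with the (already symmetric-free) form \( \sigma^{n-2} \) can be pulled outside — wedging with a fixed form commutes with anti-symmetrising the remaining slots, and \( \sigma^{n-2} \) is itself totally anti-symmetric — one may rewrite this as \( -(n-1)\,\cA\paren[\big]{\sigma(\omega(\any,\any),\any)\wedge \sigma^{n-2}} \). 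As \( n-1 \neq 0 \), vanishing of \( d_{\mfg}(\sigma^{n-1}) \) is equivalent to~\eqref{eq:balancedsheardata}. That gives the ``balanced iff'' statement.

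The main technical care-point — the step I expect to require the most attention — is the precise bookkeeping of the correction term in \( d_{\mfg} \) versus \( d_{0} \) on a power of \( \sigma \), and in particular checking that the \emph{only} new contribution comes from replacing \( d_{0}\sigma \) by \( d_{\mfg}\sigma \) and that no cross-terms or second-order-in-\( \omega \) terms appear (they don't, because \( d \) is always a first-order operator on the exterior algebra, so \( d_{\mfg} - d_{0} \) is tensorial of the stated form and the Leibniz rule does the rest). I would also double-check the constant and sign conventions against Salamon's convention recalled in~\S\ref{sec:lie-algebra-notation}, but since the statement is an ``if and only if'' with \( 0 \) on the right-hand side, any nonzero universal constant is harmless. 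One should also note that \( n \geqslant 2 \) is implicit (for \( n = 1 \) every Hermitian surface Lie algebra is balanced and the form \( \sigma^{n-2} = \sigma^{-1} \) is not defined; but \( \mfg \ne 0 \) two-step solvable with a complex structure forces \( \dim\mfg \geqslant 2 \), and the \( n=1 \) case \( \mfg = \bR^{2} \) or \( \aff_{\bR} \) is trivially balanced, consistent with \eqref{eq:balancedsheardata} reading \( 0 = 0 \) under the convention \( \sigma^{0} = 1 \)).
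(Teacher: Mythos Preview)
Your proposal is correct and follows essentially the same route as the paper's proof: both compute \( d_{\mfg}(\sigma^{n-1}) \) and identify it with \( -(n-1)\,\cA\paren[\big]{\sigma(\omega(\any,\any),\any)\wedge\sigma^{n-2}} \). The only cosmetic difference is that the paper applies the general shear-differential formula \cite[Corollary~3.11]{FrSw1} directly to \( \sigma^{n-1} \), whereas you first compute \( d_{\mfg}\sigma \) from the Chevalley--Eilenberg differential and then invoke the Leibniz rule; the two computations are equivalent.
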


\begin{proof}
  Since \( \xi=\inc\colon \mfa\rightarrow \bR^{2n} \) and
  \( a = \id_{\mfa} \), \cite[Corollary 3.11]{FrSw1} implies
  \begin{equation*}
    \begin{split}
      d_{\mfg} \sigma_{\mfg}^{n-1}
      &= d_{\bR^{2n}} \sigma^{n-1} - (\xi\circ a^{-1} \hook
        \sigma^{n-1}) \wedge \omega
        = - (n-1)\,(\inc\hook \sigma) \wedge \omega\wedge
        \sigma^{n-2}\\
      &=-(n-1)\, \cA\paren[\big]{\sigma(\omega(\any,\any),\any)\wedge
        \sigma^{n-2}}.
    \end{split}
  \end{equation*}
  So \( (\mfg,g_{\mfg},J_{\mfg}) \) is balanced, i.e.\
  \( d_{\mfg}\sigma_{\mfg}^{n-1}=0 \), if and only
  if~\eqref{eq:balancedsheardata} holds.
\end{proof}

\begin{proposition}
  \label{pro:balanced}
  Let \( (\mfg,g,J) \) be a Hermitian two-step solvable Lie algebra
  and write \( \mfg = \derg_{J}\oplus V_{r}\oplus V_{J} \) as
  in~\eqref{eq:g-V}.  Then \( (g,J) \) is balanced if and only if
  \begin{equation*}
    \tr(\ad(Z))=0
  \end{equation*}
  for all \( Z\in V_J \), and there exist unitary bases
  \( X_1,\dots,X_{2r} \) of \( (V_r,g,J) \) and
  \( Z_1,\dots,Z_{2\ell} \) of \( (V_J,g,J) \) such that the element
  \begin{equation}
    \label{eq:C}
    C = \sum_{i=1}^r [X_{2i-1},X_{2i}] +
    \sum_{j=1}^\ell [Z_{2j-1},Z_{2j}]
  \end{equation}
  is orthogonal to \( \derg_{J} \) and satisfies
  \begin{equation*}
    \tr(\ad(X)) = -\sigma \paren{C, X},
  \end{equation*}
  for any \( X\in V_{r} \).
\end{proposition}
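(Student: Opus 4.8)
The plan is to unwind the balanced criterion from Lemma~\ref{le:balancedsheardata}, namely \( \cA(\sigma(\omega(\any,\any),\any)\wedge\sigma^{n-2}) = 0 \), by testing it against a convenient unitary basis adapted to the decomposition \( \mfg = \derg_J\oplus V_r\oplus V_J \). Since \( -\omega = [\any,\any]_{\mfg} \) and \( \derg = \im\omega \), and since \( \mfa \) is identified with \( \derg \), we have \( \omega(X,Y)\in\derg \) for all \( X,Y \). The key algebraic fact is that for a \( 2n \)-dimensional Hermitian vector space, contracting \( \sigma^{n-2} \) twice with vectors produces, up to sign and combinatorial constants, either \( \sigma \) itself (when the two vectors span a \( J \)-invariant plane appropriately) or rank-one forms; more precisely, for vectors \( U, W \) one has \( (U\hook W\hook \sigma^{n-2}) \) expressible through \( \sigma \), \( g \) and the dual one-forms of \( U, W, JU, JW \). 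So I would first record the identity expressing \( \iota_U\iota_W\sigma^{n-1} \) in terms of \( \sigma^{n-2}\wedge(\text{wedges of }U^\flat, W^\flat, (JU)^\flat, (JW)^\flat) \), which reduces the vanishing of the \( (2n-1) \)-form in~\eqref{eq:balancedsheardata} to a pointwise linear-algebra condition on \( \omega \).

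Next I would split the antisymmetrised expression according to how many of its three arguments lie in \( \derg_J \), in \( V_r \), and in \( V_J \). Because \( \sigma(\omega(\cdot,\cdot),\cdot) \) already has its first slot valued in \( \derg\subseteq\derg_J\oplus\derg_r \), and because wedging with \( \sigma^{n-2} \) kills forms with too much support in any one \( J \)-invariant summand, most of the tri-partite components vanish automatically. The surviving pieces should be: (1) the ``pure \( V_J \)'' part, which after contracting with \( \sigma^{n-2} \) collapses to the condition \( \sum \sigma(\omega(Z_i, \any), Z_i) = 0 \) evaluated on a unitary basis \( Z_1,\dots,Z_{2\ell} \) of \( V_J \); recognising that \( \sigma(\omega(Z,\any),W) \) traces out \( -g([Z,W],\cdot) \)-type pairings and that summing over a unitary basis yields \( \tr(\operatorname{ad}(Z)) \) — here one uses two-step solvability so that \( \operatorname{ad}(Z) \) maps into \( \derg \) and the relevant trace is the full trace — giving \( \tr(\operatorname{ad}(Z)) = 0 \) for all \( Z\in V_J \); and (2) a mixed part living in \( V_r \) (plus a \( V_r \)-versus-\( V_J \) cross term), which after the same contraction forces the combination \( C \) of~\eqref{eq:C}, built from the \( \derg_r \oplus J\derg_r \)-components of the brackets, to satisfy \( \tr(\operatorname{ad}(X)) = -\sigma(C, X) \) for \( X\in V_r \), together with the orthogonality \( C\perp\derg_J \).

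To make the basis statement precise I would argue as follows: the quantity \( C \) a priori depends on the chosen unitary bases of \( V_r \) and \( V_J \), so I would first show that the \( \derg_r\oplus J\derg_r \)-component of \( C \) is basis-independent (it is essentially a trace-like contraction of \( \omega \) that is \( U(r)\times U(\ell) \)-invariant), and then use the remaining freedom — rotating the unitary basis of \( V_r \) and \( V_J \) — to arrange that the \( \derg_J \)-component of \( \sum[X_{2i-1},X_{2i}] + \sum[Z_{2j-1},Z_{2j}] \) vanishes, i.e.\ to absorb it. The point is that changing the unitary basis of \( V_r \) by an element of \( U(r) \) changes \( \sum[X_{2i-1},X_{2i}] \) by something controlled by the imaginary part of a Hermitian form, and by Lemma~\ref{le:conscomplexsheardata}(iv) the \( \derg_J \)-valued part of \( \omega(JX,J\hat X) \) is exactly the skew part \( J(h(X,\hat X) - h(\hat X, X)) \), which is what gets moved around; a dimension count shows one can always kill the \( \derg_J \)-component this way. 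For the converse, given such bases and the trace conditions, I would reconstruct~\eqref{eq:balancedsheardata} by running the contraction identity backwards, checking that all tri-partite components of the antisymmetrised three-form indeed vanish.

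The main obstacle I anticipate is step~(2): correctly bookkeeping the \( V_r \)-\( V_J \) cross terms and the non-orthogonality of the splitting \( V_r = \derg_r\oplus J\derg_r \) (flagged in Definition~\ref{def:V}), so that the pairing \( \sigma(C,X) \) comes out with the right sign and the contribution of \( \operatorname{ad}(Z) \) for \( Z\in V_J \) to the \( V_r \)-slot does not get double-counted against \( \tr(\operatorname{ad}(X)) \). Getting the combinatorial constants from \( \sigma^{n-2} \) to cancel cleanly, and verifying that Lemma~\ref{le:conscomplexsheardata}(i),(iii)--(iv) suffice to make the unwanted components disappear, is where the care is needed; everything else is a structured but routine contraction computation.
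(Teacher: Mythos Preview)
Your proposal has a genuine gap in the treatment of the element \( C \). You write that ``the quantity \( C \) a priori depends on the chosen unitary bases'' and plan to ``use the remaining freedom --- rotating the unitary basis of \( V_r \) and \( V_J \) --- to arrange that the \( \derg_J \)-component \dots\ vanishes'', backed by a dimension count. But \( C \) is \emph{basis-independent}: for a unitary basis \( X_1,JX_1,\dots,X_r,JX_r \) of \( V_r \) one has
\[
  \sum_{i=1}^r [X_{2i-1},X_{2i}] \;=\; -\sum_{i=1}^r \omega(X_i,JX_i),
\]
which is exactly the contraction of \( \omega|_{\Lambda^2 V_r} \) against the Poisson bivector \( \sigma|_{V_r}^{-1} = \sum_i X_i\wedge JX_i \). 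This bivector is fixed by \( U(r) \), so the sum does not move under any unitary change of basis; the same holds for the \( V_J \)-sum. Consequently there is no rotation that can ``absorb'' the \( \derg_J \)-component of \( C \): either it vanishes (and this is a genuine consequence of the balanced condition) or it does not. Your dimension-count step would therefore fail, and with it the proof of the forward direction. Relatedly, Lemma~\ref{le:conscomplexsheardata}(iv) concerns \( \omega(JX,J\hat X) \) for \( X,\hat X\in\mfa_r \), not the brackets \( [X_{2i-1},X_{2i}] = [X_i,JX_i] \) appearing in \( C \), so it does not give you the leverage you claim.

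The paper's argument is also considerably more direct than your contraction-identity route. Since \eqref{eq:balancedsheardata} is a \( (2n-1) \)-form, one simply evaluates it on every set of \( 2n-1 \) basis vectors from a fixed unitary basis of \( \derg_J\oplus V_r\oplus V_J \); the factor \( \sigma^{n-2} \) forces all but three of those vectors to occur in \( J \)-pairs, and the remaining three sit in \( \sigma(\omega(\any,\any),\any) \). There are then only three cases, according to whether the single omitted basis vector lies in \( V_J \), \( V_r \), or \( \derg_J \), and these yield respectively \( \tr(\ad(Z))=0 \), \( \tr(\ad(X)) = -\sigma(C,X) \), and \( C\perp\derg_J \). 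No identity for \( \iota_U\iota_W\sigma^{n-1} \) is needed, and the computation makes it transparent that the conditions hold for \emph{every} choice of unitary bases (cf.\ Remark~\ref{rem:pair-basis}), not merely for one specially chosen pair.
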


\begin{proof}
  Fix unitary bases \( Y_1,Y_{2} = JY_{1},\dots,Y_{2s} \) of
  \( (\derg_J,g,J) \), \( X_1,\dots,X_{2r} \) of \( (V_r,g,J) \) and
  \( Z_1,\dots,Z_{2\ell} \) of \( (V_J,g,J) \).
  We have to check~\eqref{eq:balancedsheardata} for all combinations
  of \( 2n-1 \) vectors from the basis
  \( Y_1,\dots,Y_{2s},X_1,\dots,X_{2r},Z_1,\dots,Z_{2\ell} \) of
  \( \mfg \).
  There are thus three cases, corresponding to omitting one
  vector~\( W \) from \( \derg_J \), \( V_r \) or \( V_J \).

  First, if we omit
  \( W \in \Set{Z_{1},\dots,Z_{2\ell}} \subset V_{J} \), we can
  relabel our bases, so \( W = Z_{2r} \) and put \( Z = Z_{2r-1} \).
  Then \eqref{eq:balancedsheardata} only has non-zero contributions
  each \( \sigma \) is evaluated on pairs \( A, JA \), implying
  \( Z \) is then one of the arguments of
  \( \sigma(\omega(\any,\any),\any) \).
  But \( \im(\omega) \perp V_J \), so \( Z \) is an argument of
  \( \omega \) and the only contributions
  to~\eqref{eq:balancedsheardata} are
  \begin{equation*}
    \begin{split}
      0&=\sum_{j=1}^s\sigma(\omega(Z,Y_{2j-1}),Y_{2j}) -
         \sigma(\omega(Z,Y_{2j}),Y_{2j-1})
      \\
       &\qquad
         +\sum_{k=1}^r \sigma(\omega(Z,X_{2k-1}),X_{2k}) -
         \sigma(\omega(Z,X_{2k}), X_{2k-1})\\
       &=\sum_{j=1}^s g(\omega(Z,Y_{2j-1}),Y_{2j-1}) +
         g(\omega(Z,Y_{2j}),Y_{2j}) \\
       &\qquad
         + \sum_{k=1}^r g(\omega(Z,X_{2k-1}),X_{2k-1}) +
         g(\omega(Z,X_{2k}),X_{2k})\\
       &=\tr(\omega(Z,\any))= - \tr(\ad(Z)).
    \end{split}
  \end{equation*}

  For the next case, \( W = X_{2r} \) is omitted and we put
  \( X = X_{2r-1} \).
  As before, the only contributions are from inserting \( X \) and two
  \( J \)-linearly dependent vectors into
  \( \sigma(\omega(\any,\any),\any) \).
  Noting that \( \omega|_{\Lambda^2 \mfa} = 0 \) and
  \( \im(\omega) \perp V_{J} \), one computes
  \begin{align*}
    0
    &= \sum_{j=1}^s \sigma(\omega(X,Y_{2j-1}),Y_{2j}) -
      \sigma(\omega(X,Y_{2j}),Y_{2j-1}) \\
    &\qquad
      +\sum_{k=1}^{r-1} \sigma(\omega(X,X_{2k-1}),X_{2k}) -
      \sigma(\omega(X,X_{2k}),X_{2k-1})\\
    &\qquad
      +\sigma\paren[\bigg]{\sum_{k=1}^{r-1} \omega(X_{2k-1},X_{2k})
      + \sum_{k=1}^{\ell} \omega(Z_{2k-1},Z_{2k}),X}\\
    &= \sum_{p=1}^{2s} g(\omega(X,Y_p),Y_p)
      +\sum_{q=1}^{2r-2} g(\omega(X,X_q),X_q)
      +\sigma(-C + [X,W], X) \\
    &= -\tr(\ad(X)) + g([X,W],W)
      - \sigma\paren{C,X}+g(J[X,W],X)\\
    & = -\tr(\ad(X)) - \sigma(C,X).
  \end{align*}

  Finally, for \( W = Y_{2s} \), \( Y = Y_{2s-1} \), we need to have
  \( Y \) as the final argument of
  \( \sigma(\omega(\any,\any),\any) \), and the first two arguments
  need to be \( J \)-dependent vectors from \( V_{r}\oplus V_{J} \).
  So \eqref{eq:balancedsheardata} on these vectors reduces to
  \begin{equation*}
    0= -\sigma(C,Y) = g(C,JY),
  \end{equation*}
  giving \( C \) is orthogonal to~\( \derg_{J} \).
\end{proof}

\begin{remark}
  \label{rem:pair-basis}
  The proof shows that the conditions of
  Proposition~\ref{pro:balanced} hold for one pair
  \( X_1,\dots,X_{2r} \) and \( Z_1,\dots,Z_{2\ell} \) of unitary
  bases, then they hold automatically for all such pairs.
\end{remark}

\begin{remark}
  Proposition~\ref{pro:balanced} allows us to recover the
  classification of certain balanced Hermitian Lie algebras
  in~\cite{FiP1}.
  For this, let \( (\mfg, g, J) \) be a \( 2n \)-dimensional almost
  Abelian Hermitian Lie algebra, meaning that there is a
  codimension~\( 1 \) Abelian ideal.
  For simplicity, let us just consider the generic case when
  \( \dim(\derg)=2n-1 \).
  With more effort the methods also apply to \( \dim(\derg) < 2n-1 \).
  Under this additional assumption, \( (\mfg,g,J) \) is of pure
  type~III with \( \dim(\derg_J) = 2n-2 \) and
  \( \dim(\derg_r) = 1 \).
  Choose \( X\in \derg_r \) of norm one and consider
  \( A_X = \omega(JX,\any) \in \End(\derg) \).
  By Lemma~\ref{le:conscomplexsheardata}, there exist \( a\in \bR \),
  \( v\in \derg_J \) and \( A\in \gl(\derg_J,J) \) such that
  \begin{equation*}
    A_X=
    \begin{pmatrix}
      a & v \\
      0 & A
    \end{pmatrix}
  \end{equation*}
  with respect to the splitting \( \derg=\derg_r\oplus \derg_J \).
  By Proposition~\ref{pro:balanced}, \( (\mfg,g,J) \) is balanced if
  and only if
  \begin{equation*}
    a+\tr(A)=\tr(\ad(JX))=-\sigma([X,JX],JX)=g(aX,X)=a
  \end{equation*}
  and \( [X,JX] = v \) is orthogonal to \( \derg_{J} \).
  But \( v \in \derg_{J} \), so we have balanced if and only if
  \( \tr(A) = 0 \) and \( v = 0 \), which coincides with~\cite{FiP1}.
\end{remark}

For unimodular Lie algebras \( \tr(\ad(X)) = 0 \) for all
\( X \in \mfg \), so Proposition~\ref{pro:balanced} simplifies as
below and Remark~\ref{rem:pair-basis} holds in this context.

\begin{corollary}
  \label{co:balancedunimodular}
  Let \( (\mfg,g,J) \) be a unimodular Hermitian two-step solvable Lie
  algebra.
  Then \( (g,J) \) is balanced if and only if \( C = 0 \) in
  \eqref{eq:C}.  \qed
\end{corollary}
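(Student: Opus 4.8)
The plan is to derive Corollary~\ref{co:balancedunimodular} directly from Proposition~\ref{pro:balanced} by specialising to the unimodular case, where the trace conditions are automatically satisfied. First I would note that if \( \mfg \) is unimodular, then \( \tr(\ad(X)) = 0 \) for every \( X \in \mfg \); in particular the condition \( \tr(\ad(Z)) = 0 \) for all \( Z \in V_J \) in Proposition~\ref{pro:balanced} holds vacuously, so it imposes no restriction. It therefore remains to analyse what the second block of conditions in Proposition~\ref{pro:balanced} reduces to.

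The second block asserts the existence of unitary bases of \( V_r \) and \( V_J \) making the element \( C \) of~\eqref{eq:C} orthogonal to \( \derg_J \) and satisfying \( \tr(\ad(X)) = -\sigma(C,X) \) for all \( X \in V_r \). In the unimodular setting the left-hand side vanishes, so this becomes: there exist unitary bases with \( C \perp \derg_J \) and \( \sigma(C,X) = 0 \) for all \( X \in V_r \). The claim is that this is equivalent to \( C = 0 \). One direction is immediate: if \( C = 0 \) then both conditions hold trivially (and by Remark~\ref{rem:pair-basis}, with \( \tr(\ad(X)) = 0 \) holding in this context, the choice of unitary bases is irrelevant, so \( C = 0 \) for one pair of bases iff for all). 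For the converse, suppose \( C \perp \derg_J \) and \( \sigma(C,X) = 0 \) for all \( X \in V_r \). I would observe that \( C \in \derg = \im\omega \) by construction, and since \( \derg = \derg_J \oplus \derg_r \) with \( \derg_r \subseteq V_r \), the orthogonality \( C \perp \derg_J \) forces \( C \in \derg_r \), hence \( C \in V_r \). Now the condition \( \sigma(C,X) = 0 \) for all \( X \in V_r \) says \( JC \perp V_r \); but \( V_r = \derg_r \oplus J\derg_r \) is \( J \)-invariant, so \( JC \in V_r \), and combining \( JC \in V_r \) with \( JC \perp V_r \) gives \( JC = 0 \), i.e.\ \( C = 0 \).

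Putting these together: for unimodular \( (\mfg,g,J) \), the balanced condition from Proposition~\ref{pro:balanced} holds if and only if there is a choice of unitary bases with \( C = 0 \), and by Remark~\ref{rem:pair-basis} this is independent of the choice, so it holds if and only if \( C = 0 \). I expect no real obstacle here; the only mild subtlety is keeping track of the three subspaces \( \derg_J \), \( V_r \), \( V_J \) and using that \( C \in \derg \subseteq \derg_J \oplus V_r \) together with \( J \)-invariance of \( V_r \) to squeeze \( C \) down to zero. The observation that \( \tr(\ad(\any)) \) vanishing also trivialises Remark~\ref{rem:pair-basis}'s hypotheses (so basis-independence is legitimate) is worth stating explicitly, as it is what licenses the clean statement "\( C = 0 \)" without reference to a chosen basis.
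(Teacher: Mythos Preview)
Your proposal is correct and follows the same approach as the paper: specialise Proposition~\ref{pro:balanced} using unimodularity to kill the trace conditions, then read off that the remaining conditions amount to \(C=0\). The paper records this as a one-line remark before the corollary, whereas you have spelled out the small linear-algebra step showing that \(C\perp\derg_J\) together with \(\sigma(C,\any)|_{V_r}=0\) forces \(C=0\) via \(C\in\derg_r\subset V_r\) and \(J\)-invariance of \(V_r\); this extra detail is sound and fills a gap the paper leaves implicit.
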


Note that in Proposition~\ref{pro:balanced}, the restriction of
\( g \) to \( \derg_J \) plays no role.

\begin{corollary}
  \label{co:changebalancedmetric}
  Let \( (\mfg,g,J) \) be a two-step solvable balanced Lie algebra.
  If \( \tilde{g} \) is another metric compatible with \( (\mfg,J) \)
  satisfying \( (\derg_J)^{\perp g} = (\derg_J)^{\perp \tilde{g}} \)
  and
  \( g|_{(\derg_J)^{\perp g}} = \tilde{g}|_{(\derg_J)^{\perp
  \tilde{g}}} \), then \( \tilde{g} \) is balanced too.  \qed
\end{corollary}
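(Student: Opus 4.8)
The plan is to read off the balanced condition entirely from Proposition~\ref{pro:balanced} and observe that it only involves data that $g$ and $\tilde g$ share. Concretely, the characterisation in Proposition~\ref{pro:balanced} has three ingredients: (a)~$\tr(\ad(Z)) = 0$ for all $Z \in V_J$; (b)~the existence of unitary bases of $V_r$ and $V_J$ for which the element $C$ of~\eqref{eq:C} is orthogonal to $\derg_J$; and (c)~the identity $\tr(\ad(X)) = -\sigma(C,X)$ for all $X \in V_r$. The key point is that none of these involves $g|_{\derg_J}$: the subspaces $\derg_J$, $\derg_r$, $V_r$, $V_J$ depend only on $(\mfg,J)$ together with the choice of orthogonal complement $(\derg_J)^\perp$, and the bracket, the trace of $\ad$, and the restriction of $\sigma$ to $V_r \oplus V_J$ are determined by $\mfg$, $J$ and $g|_{(\derg_J)^\perp}$ alone.

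First I would note that the hypothesis $(\derg_J)^{\perp g} = (\derg_J)^{\perp \tilde g}$ gives $V_r^g \oplus V_J^g = V_r^{\tilde g} \oplus V_J^{\tilde g}$ as a $J$-invariant subspace; but in fact one must be slightly careful, since the internal splitting of this complement into $V_r$ and $V_J$ could a priori differ. However, $\derg_r$ is the orthogonal complement of $\derg_J$ inside the fixed subspace $\derg$, and $\derg_r^g = \derg \cap (\derg_J)^{\perp g} = \derg \cap (\derg_J)^{\perp \tilde g} = \derg_r^{\tilde g}$; then $V_r = \derg_r \oplus J\derg_r$ is the same for both metrics, and consequently $V_J$, being the orthogonal complement of $\derg + J\derg = \derg_J \oplus V_r$ inside $(\derg_J)^\perp$ with respect to the common restricted metric, also agrees. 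So the whole decomposition~\eqref{eq:g-V} is shared, and the unitary bases of $(V_r, g, J)$ and $(V_J, g, J)$ coincide with those of $(V_r, \tilde g, J)$ and $(V_J, \tilde g, J)$.

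Next I would apply Proposition~\ref{pro:balanced} to $(\mfg, g, J)$, which is balanced by hypothesis: there exist unitary bases $X_1, \dots, X_{2r}$ of $V_r$ and $Z_1, \dots, Z_{2\ell}$ of $V_J$ such that $C$ from~\eqref{eq:C} is orthogonal to $\derg_J$ and $\tr(\ad(X)) = -\sigma(C,X)$ for all $X \in V_r$, and moreover $\tr(\ad(Z)) = 0$ for all $Z \in V_J$. By the previous paragraph these same bases are unitary for $\tilde g$, the element $C$ is unchanged (it is built only from brackets), and orthogonality of $C$ to $\derg_J$ is the same condition for both metrics since $(\derg_J)^{\perp g} = (\derg_J)^{\perp \tilde g}$. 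Finally, $\tilde\sigma(C,X) = \tilde g(JC, X) = g(JC, X) = \sigma(C,X)$ for $X \in V_r$, because $C$ lies in $V_r \oplus V_J = (\derg_J)^\perp$ (it is orthogonal to $\derg_J$ and $\derg_r \oplus J\derg_r = V_r$ together with $V_J$ exhausts the complement — one uses $\im\omega = \derg$ to see $C \in \derg + J\derg \subseteq (\derg_J)^\perp \oplus \derg_J$, then projects), so both arguments of $\sigma$ versus $\tilde\sigma$ lie in the subspace where the two metrics agree. The trace of $\ad$ is intrinsic to $\mfg$. Hence all the conditions of Proposition~\ref{pro:balanced} hold verbatim for $(\mfg, \tilde g, J)$, and $\tilde g$ is balanced.

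The only real subtlety — and the step I would be most careful with — is the bookkeeping in the second paragraph: verifying that the finer decomposition $V_r$ versus $V_J$ is genuinely metric-independent given only agreement of the \emph{total} complement $(\derg_J)^\perp$ and of $g$ restricted to it, and checking that $C$ indeed lands in that complement so that the $\sigma$-term transfers. Both are straightforward once one writes out the definitions in Definition~\ref{def:V}, using that $\derg$ and $\derg_J = \derg \cap J\derg$ are purely $(\mfg,J)$-data while $\derg_r$, $V_r$, $V_J$ are obtained by taking orthogonal complements inside already-fixed subspaces using only $g|_{(\derg_J)^\perp}$. Everything else is immediate from Proposition~\ref{pro:balanced} and Remark~\ref{rem:pair-basis}.
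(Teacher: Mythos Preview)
Your proposal is correct and follows exactly the paper's approach: the paper records this corollary with a \qed and the single preceding remark that ``in Proposition~\ref{pro:balanced}, the restriction of \( g \) to \( \derg_J \) plays no role,'' and you have simply spelled out the bookkeeping behind that remark. One minor simplification in your \(\sigma\)-step: since \( C \) is a sum of Lie brackets it lies in \( \derg \), and orthogonality to \( \derg_J \) forces \( C \in \derg_r \subset V_r \subset (\derg_J)^{\perp} \) directly, so both \( JC \) and \( X \) lie in the common complement without any projecting.
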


\subsection{K\"ahler Lie algebras}
\label{subsec:Kahler}

\begin{lemma}
  \label{le:Kahlersheardata}
  Let \( (\bR^{2n},g,J) \) be a flat K\"ahler structure on the Lie
  algebra \( \bR^{2n} \) with associated K\"ahler form \( \sigma \).
  Then the shear \( (\mfg,g_{\mfg},J_{\mfg}) \) of
  \( (\bR^{2n},g,J) \) by complex shear data \( (\mfa,\omega) \) is
  K\"ahler if and only if
  \begin{equation}
    \label{eq:Kahlersheardata}
    \cA(\sigma(\omega(\any,\any),\any))=0.
  \end{equation}
\end{lemma}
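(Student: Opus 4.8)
The plan is to rerun the proof of Lemma~\ref{le:balancedsheardata} with the power \( \sigma^{n-1} \) replaced throughout by the fundamental form \( \sigma \). Concretely, I would apply the shear transformation formula for the exterior derivative, \cite[Corollary 3.11]{FrSw1}, to the two-form \( \sigma \). Since here \( \xi = \inc\colon\mfa\rightarrow\bR^{2n} \) and \( a = \id_{\mfa} \), that formula specialises to
\begin{equation*}
  d_{\mfg}\sigma_{\mfg} = d_{\bR^{2n}}\sigma - (\inc\hook\sigma)\wedge\omega .
\end{equation*}
As \( \bR^{2n} \) is Abelian, \( d_{\bR^{2n}}\sigma = 0 \), leaving \( d_{\mfg}\sigma_{\mfg} = -(\inc\hook\sigma)\wedge\omega \).

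It then remains to identify \( (\inc\hook\sigma)\wedge\omega \) with \( \cA(\sigma(\omega(\any,\any),\any)) \). This is exactly the \( n = 2 \) instance of the identity \( (\inc\hook\sigma)\wedge\omega\wedge\sigma^{n-2} = \cA(\sigma(\omega(\any,\any),\any)\wedge\sigma^{n-2}) \) used for general \( n \) in the proof of Lemma~\ref{le:balancedsheardata}; indeed, for \( n = 2 \) the conditions of the two lemmas coincide, reflecting that balanced and K\"ahler agree in complex dimension two. If one wishes to check the identity directly, evaluating \( (\inc\hook\sigma)\wedge\omega \) on three vectors \( X,Y,Z \) and using that \( \omega \) takes values in \( \mfa\subseteq\bR^{2n} \) produces the cyclic sum \( \sigma(\omega(X,Y),Z) + \sigma(\omega(Y,Z),X) + \sigma(\omega(Z,X),Y) \), which — since \( \sigma(\omega(\any,\any),\any) \) is already alternating in its first two slots — equals \( \cA(\sigma(\omega(\any,\any),\any)) \) up to a positive constant. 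Together with the previous display this shows \( d_{\mfg}\sigma_{\mfg} = 0 \) if and only if \eqref{eq:Kahlersheardata} holds.

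A self-contained variant avoids \cite{FrSw1} entirely: by the Remark at the end of \S\ref{subsec:complexshear}, under the vector-space identification \( \mfg = \bR^{2n} \) one has \( \sigma_{\mfg} = \sigma \) and \( [\any,\any]_{\mfg} = -\omega \), so the Chevalley--Eilenberg formula for the differential of a two-form gives
\begin{equation*}
  d_{\mfg}\sigma_{\mfg}(X,Y,Z) = \sigma(\omega(X,Y),Z) + \sigma(\omega(Y,Z),X) + \sigma(\omega(Z,X),Y)
\end{equation*}
for all \( X,Y,Z\in\mfg \), whose right-hand side is a nonzero multiple of \( \cA(\sigma(\omega(\any,\any),\any))(X,Y,Z) \).

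The only point requiring attention — and the main, essentially bookkeeping, obstacle — is to keep the sign and normalisation conventions for \( \inc\hook \), for \( \cA \), and for the Chevalley--Eilenberg differential consistent with \cite{FrSw1} and the earlier parts of the paper. Because the universal constant relating \( d_{\mfg}\sigma_{\mfg} \) to \( \cA(\sigma(\omega(\any,\any),\any)) \) is nonzero, it does not affect the stated equivalence.
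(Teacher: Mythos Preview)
Your proposal is correct and follows essentially the same route as the paper: apply \cite[Corollary~3.11]{FrSw1} to \( \sigma \) exactly as in the proof of Lemma~\ref{le:balancedsheardata}, obtain \( d_{\mfg}\sigma_{\mfg} = -\cA(\sigma(\omega(\any,\any),\any)) \), and read off the equivalence. The paper's proof is a one-line reference back to that lemma, whereas you spell out the intermediate step \( (\inc\hook\sigma)\wedge\omega = \cA(\sigma(\omega(\any,\any),\any)) \) and also supply a self-contained Chevalley--Eilenberg variant; both additions are fine but not needed.
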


\begin{proof}
  Similar to the proof of Lemma~\ref{le:balancedsheardata},
  \cite[Corollary 3.11]{FrSw1} implies
  \begin{equation*}
    \begin{split}
      d_{\mfg} \sigma_{\mfg}
      &= -\cA(\sigma(\omega(\any,\any),\any))
    \end{split}
  \end{equation*}
  and so that \( (\mfg,g_{\mfg},J_{\mfg}) \) is K\"ahler, i.e.\
  \( d_{\mfg}\sigma_{\mfg}=0 \), if and only
  if~\eqref{eq:Kahlersheardata} holds.
\end{proof}

We first note some general consequences of~\eqref{eq:Kahlersheardata}.

\begin{lemma}
  \label{le:Kahlersheardataconsequences}
  When the shear in Lemma~\ref{le:Kahlersheardata} is K\"ahler, we
  have
  \begin{equation*}
    \omega(U_J,U_J)=0, \qquad \omega(J\mfa_r,J\mfa_r)=0
    \quad\text{and}\quad h=0.
  \end{equation*}
  Furthermore, there exist a complex unitary basis \( Y_1,\dots,Y_s \)
  of \( \mfa_J \), a real orthonormal basis \( X_1,\dots,X_r \) of
  \( \mfa_r \), one-forms \( \alpha_1,\dots,\alpha_s\in \mfa_r^* \) on
  \( \mfa_r \), and \( \lambda_1,\dots,\lambda_r\in \bR \) such that
  \begin{equation*}
    K_X(Y_j) = - \alpha_j(X) JY_j,\qquad
    f(X_k,X_m)= - \delta_{km} \lambda_k X_{k}
  \end{equation*}
  for all \( X\in \mfa_r \), \( Z\in U_J \), \( j\in \{1,\dots,s\} \)
  and \( k,m\in \{1,\dots,r\} \).
\end{lemma}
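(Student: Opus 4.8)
The plan is to expand the K\"ahler condition~\eqref{eq:Kahlersheardata} by evaluating it on triples of basis vectors drawn from the decomposition \( \bR^{2n} = \mfa_J \oplus \mfa_r \oplus J\mfa_r \oplus U_J \), using that \( \omega \) is valued in \( \mfa = \mfa_J \oplus \mfa_r \), that \( \omega|_{\Lambda^2\mfa} = 0 \), and that \( \sigma(\any,\any) \) pairs a space with its \( J \)-image. First I would fix a complex unitary basis \( Y_1,\dots,Y_s \) of \( \mfa_J \) and a real orthonormal basis \( X_1,\dots,X_r \) of \( \mfa_r \), so that \( JY_1,\dots,JY_s, JX_1,\dots,JX_r \) complete a basis of \( \mfa \oplus J\mfa \), and pick any orthonormal basis \( Z_1,\dots,Z_{2\ell} \) of \( U_J \). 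Because \( \sigma(\omega(v,w),u) \) vanishes unless \( u \) is \( J \)-related to one of \( v,w \) modulo \( \mfa \), most triples contribute nothing, and~\eqref{eq:Kahlersheardata} decomposes into a finite list of identities indexed by which kind of vectors appear.

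The key steps, in order, are: (1)~Take triples with two vectors from \( U_J \) and one from \( J U_J \cup \mfa \); since \( \im\omega \perp U_J \), the surviving terms force \( \omega(U_J,U_J) = 0 \). (2)~Take triples with two vectors from \( J\mfa_r \) and one from \( J\mfa_r \)-related space; combined with Lemma~\ref{le:conscomplexsheardata}(iv) (which says \( \omega(JX,J\hat X) = J(h(X,\hat X) - h(\hat X, X)) \in \mfa_J \)) and with triples testing the \( \mfa_J \)-component, this yields \( h \) symmetric \emph{and} antisymmetric, hence \( h = 0 \), and then \( \omega(J\mfa_r, J\mfa_r) = 0 \). (3)~Take triples \( (X, Y_j, JY_j) \) with \( X \in \mfa_r \): this isolates \( g(K_X Y_j, Y_j) \)-type terms; using \( [J,K_X] = 0 \) from Lemma~\ref{le:conscomplexsheardata}(ii), \( K_X \) preserves each complex line \( \spa{Y_j, JY_j} \), so \( K_X Y_j = \mu_j(X) Y_j + \nu_j(X) JY_j \) for some linear \( \mu_j, \nu_j \in \mfa_r^* \); the K\"ahler identity forces the symmetric part \( \mu_j \) to vanish, giving \( K_X Y_j = -\alpha_j(X) JY_j \) with \( \alpha_j := -\nu_j \). (4)~Take triples \( (X_k, X_m, JX_p) \) in \( \mfa_r \oplus J\mfa_r \): using that \( f \) is symmetric (Lemma~\ref{le:conscomplexsheardata}(iii)) and that \( G = 0 \) (Lemma~\ref{le:conscomplexsheardata}(i)), the identity becomes a cyclic symmetry condition on \( g(f(X_k,X_m), X_p) \), i.e.\ this tensor is totally symmetric; a standard argument then shows the associated symmetric endomorphisms pairwise commute (this also follows from \( [F_{\tilde X}, F_{\hat X}] = 0 \) via Lemma~\ref{le:conscomplexsheardata}(v)), so they are simultaneously diagonalisable, and after rotating the orthonormal basis \( X_1,\dots,X_r \) we get \( f(X_k,X_m) = -\delta_{km}\lambda_k X_k \).

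The main obstacle I anticipate is step~(4): extracting the simultaneous diagonalisation cleanly. One must check that total symmetry of \( (X_k,X_m,X_p) \mapsto g(f(X_k,X_m),X_p) \) together with the commutativity \( [F_{\tilde X}, F_{\hat X}] = 0 \) indeed lets one choose a \emph{single} orthonormal basis simultaneously diagonalising all the \( F_X \), and that the resulting eigenvalue pattern is exactly \( f(X_k,X_m) = -\delta_{km}\lambda_k X_k \) rather than something with off-diagonal blocks; the sign and index bookkeeping (and keeping track of which \( \sigma(\omega(\any,\any),\any) \) terms are nonzero after each basis normalisation) is where the calculation is most delicate. A secondary subtlety is confirming that the basis rotation performed in step~(4) does not disturb the normal forms obtained in steps~(1)--(3); this holds because those earlier conclusions are either basis-independent (\( \omega(U_J,U_J) = 0 \), \( \omega(J\mfa_r,J\mfa_r) = 0 \), \( h = 0 \)) or transform covariantly with the one-forms \( \alpha_j \), so one simply re-reads \( \alpha_j \) in the rotated basis.
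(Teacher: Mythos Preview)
Your overall strategy---evaluating~\eqref{eq:Kahlersheardata} on triples drawn from the decomposition and reading off constraints---matches the paper's, and steps~(1), (2) and the diagonalisation in~(4) are essentially right. But step~(3) has a real gap: the implication ``$[J,K_X]=0$, hence $K_X$ preserves each complex line $\spa{Y_j,JY_j}$'' is false for a generic unitary basis. Complex-linearity of $K_X$ only says it preserves complex \emph{subspaces}, not arbitrary complex lines; a complex rotation mixing $Y_1$ and $Y_2$ already fails. Since you fix the $Y_j$ at the outset, before extracting any K\"ahler constraint on $K_X$, there is no reason your chosen $Y_j$ are eigenvectors, so the formula $K_X Y_j=\mu_j(X)Y_j+\nu_j(X)JY_j$ is unjustified.

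The paper's fix is to test~\eqref{eq:Kahlersheardata} on $(JX,\tilde Y,\hat Y)$ for \emph{arbitrary} $\tilde Y,\hat Y\in\mfa_J$: the surviving terms give $\sigma(K_X\tilde Y,\hat Y)+\sigma(\tilde Y,K_X\hat Y)=0$, so $K_X\in\sP(\mfa_J,\sigma)$, and together with $[J,K_X]=0$ this forces $K_X\in\un(\mfa_J,g,J)$. The $K_X$ pairwise commute by Lemma~\ref{le:conscomplexsheardata}(v), hence are simultaneously diagonalisable with imaginary eigenvalues, and \emph{only then} do you choose $Y_1,\dots,Y_s$ to be a common eigenbasis. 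Two smaller points: your triple $(X,Y_j,JY_j)$ with $X\in\mfa_r$ lies entirely in $\mfa$, so every $\omega$-term vanishes and the identity is vacuous---you need $JX$ in place of $X$. Likewise in step~(4) the triple yielding total symmetry of $g(f(\cdot,\cdot),\cdot)$ is $(\bar X,J\tilde X,J\hat X)$ with one entry from $\mfa_r$ and two from $J\mfa_r$; your $(X_k,X_m,JX_p)$ produces a relation on $g(Jf(\cdot,\cdot),\cdot)$ instead, which does not directly give that each $F_X$ is $g$-symmetric.
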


\begin{proof}
  Inserting \( Z_1,Z_2\in U_J \), which is orthogonal to~\( \mfa \),
  and \( W\in \mfa+J\mfa \) into~\eqref{eq:Kahlersheardata} yields
  \begin{equation*}
    0 = \sigma(\omega(Z_1,Z_2),W),
  \end{equation*}
  and so \( \omega(Z_1,Z_2)=0 \), giving \( \omega(U_J,U_J)=0 \).
  Next, observe that for \( \hat{X},\tilde{X}\in \mfa_r \), the
  endomorphisms \( K_{\tilde{X}}, K_{\hat{X}}\in \End(\mfa_J) \) are
  complex and commute by Lemma~\ref{le:conscomplexsheardata}.
  Inserting \( \tilde{Y},\hat{Y}\in\mfa_J \) and
  \( JX \in J\mfa_{r} \) into~\eqref{eq:Kahlersheardata} yields
  \begin{equation*}
    0 = \sigma(K_X(\tilde{Y}),\hat{Y})+\sigma(\tilde{Y},K_X(\hat{Y})),
  \end{equation*}
  meaning that \( K_X\in \sP(\mfa_J,\sigma) \).
  But \( K_{X} \) is complex, so \( K_X\in \un(\mfa_J,g,J) \).
  It follows, that the \( K_X \) are simultaneously complex
  diagonalisable with imaginary eigenvalues.
  In particular, there exists a complex unitary basis
  \( Y_1,\dots,Y_s \) of \( (\mfa_J,g) \) and one-forms
  \( \alpha_1,\dots,\alpha_s\in \mfa_r^* \) on \( \mfa_r \) such that
  \begin{equation*}
    K_X(Y_j) = - \alpha_j(X) JY_j
  \end{equation*}
  for all \( X\in \mfa \) and all \( j\in \{1,\dots,s\} \).
  Next, \eqref{eq:Kahlersheardata} evaluated on \( Y\in \mfa_J \),
  \( \tilde{X} \in \mfa_r \) and \( J\hat{X} \in J\mfa_{r} \) shows
  \begin{equation*}
    0 = \sigma(Y,H_{\hat{X}}(\tilde{X}))
    = \sigma(Y,h(\hat{X},\tilde{X})),
  \end{equation*}
  so \( h=0 \).
  Lemma~\ref{le:conscomplexsheardata} gives
  \( \omega(J\tilde{X},J\hat{X}) =
  J(h(\tilde{X},\hat{X})-h(\hat{X},\tilde{X})) = 0 \), and hence
  \( \omega(J\mfa_r,J\mfa_r)=0 \).
  Finally, putting \( \bar{X},J\tilde{X},J\hat{X} \in \mfa_{r} \)
  in~\eqref{eq:Kahlersheardata} gives
  \begin{equation*}
    0
    = -\sigma(f(\tilde{X},\bar{X}),J\hat{X}) +
    \sigma(f(\hat{X},\bar{X}),J\tilde{X})
    =-g(f(\tilde{X},\bar{X}),\hat{X}) +
    g(f(\hat{X},\bar{X}),\tilde{X}).
  \end{equation*}
  However \( f \) is symmetric by Lemma~\ref{le:conscomplexsheardata},
  we get that \( g(f(\any,\any),\any) \) is totally symmetric.
  In particular, all endomorphisms \( F_X \) are symmetric.
  Since these endomorphisms commute by
  Lemma~\ref{le:conscomplexsheardata}, we have a common orthonormal
  basis \( X_1,\dots,X_r \) of \( (\mfa_r,g) \) of eigenvectors for
  all~\( F_X \).  Now
  \begin{equation*}
    \spa{X_j}\ni F_{X_i}(X_j)=f(X_i,X_j)=f(X_j,X_i)
    =F_{X_j}(X_i)\in \spa{X_i}
  \end{equation*}
  for all \( i,j\in \{1,\dots,r\} \) implies the existence of
  \( \lambda_1,\dots,\lambda_r\in \bR \) such that
  \begin{equation*}
    f(X_i,X_j)= - \delta_{ij} \lambda_j X_j
  \end{equation*}
  for all \( i,j\in \{1,\dots,r\} \).
\end{proof}

It seems hard to solve~\eqref{eq:Kahlersheardata} in full generality,
so we now restrict to a certain subclass, namely those two-step
solvable K\"ahler Lie algebras \( (\mfg,g,J) \) with
\( [J\derg, \derg_J] = \derg_J \).
Note that this class includes those pure type~I.
Moreover, it also includes algebras of pure type~III, i.e.\ with
\( V_J = 0 \), since by Lemma~\ref{le:Kahlersheardataconsequences} we
have \( [J\derg_r, J\derg_r] = 0 \) and \( h = 0 \) and so must have
\( [J\derg, \derg_J] = \derg_J \) in order for \( \derg \) to be the
commutator ideal.

When \( [J\derg, \derg_J] = \derg_J \), the form of \( B_Z \),
\( Z\in V_J \), simplifies and allows for a classification.
The condition \( [J\derg, \derg_J] = \derg_J \) is equivalent to
\( \alpha_j\neq 0 \) for all \( j=1,\dots,r \).
Moreover, for any \( Z \in U_J \), the endomorphisms \( B_Z \) commute
with \( A_X \) for all \( X\in \mfa_r \) by
Lemma~\ref{le:conscomplexsheardata}.
As \( A_X \) has imaginary non-zero eigenvalues on \( \mfa_J \) and
real eigenvalues on \( \mfa_r \), we get that \( B_Z \) preserves the
splitting \( \mfa=\mfa_r\oplus \mfa_J \).
Using this property, we will obtain:

\begin{theorem}
  \label{th:KahlerdergdergJ=dergJ}
  Let \( (\mfg,g,J) \) be an almost Hermitian Lie algebra.
  Then \( (\mfg,g,J) \) is a two-step solvable K\"ahler Lie algebra
  \( (\mfg,g,J) \) with \( [J\derg,\derg_J]=\derg_J \) if and only if
  \( J\derg_r\perp \derg_r \) and there exist a complex unitary basis
  \( Y_1,\dots, Y_s \) of~\( \derg_J \), an orthonormal basis
  \( X_1,\dots,X_r \) of~\( \derg_r \), non-zero one forms
  \( \alpha_1,\dots,\alpha_s\in (\derg_r)^*\setminus \{0\} \),
  one-forms \( \beta_1,\dots, \beta_s\in V_J^* \) and non-zero real
  numbers \( \lambda_1,\dots,\lambda_r\in \bR\setminus \{0\} \) such
  that the only non-zero Lie brackets (up to anti-symmetry and complex
  linear extension on \( \derg_J \)) are given by
  \begin{equation*}
    [JX,Y_j]=\alpha_j(X) JY_j,\qquad
    [Z,Y_j]=\beta_j(Z) JY_j,\qquad [JX_k,X_k]=\lambda_k X_k
  \end{equation*}
  for \( j\in \{1,\dots,s\} \), \( k\in \{1,\dots,r\} \),
  \( X\in \derg_r \) and \( Z\in V_J \).
\end{theorem}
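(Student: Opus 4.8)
The plan is to prove both directions by carefully analysing the K\"ahler shear equation~\eqref{eq:Kahlersheardata} together with the structural consequences already collected in Lemmas~\ref{le:conscomplexsheardata} and~\ref{le:Kahlersheardataconsequences}. For the ``only if'' direction, I would start from a two-step solvable K\"ahler Lie algebra $(\mfg,g,J)$ with $[J\derg,\derg_J]=\derg_J$, realise it as a shear of flat $(\bR^{2n},g,J)$ by complex shear data $(\mfa,\omega)$ with $\mfa=\derg$, and import from Lemma~\ref{le:Kahlersheardataconsequences} the vanishing $\omega(U_J,U_J)=0$, $\omega(J\mfa_r,J\mfa_r)=0$, $h=0$, the unitary basis $Y_1,\dots,Y_s$ of $\mfa_J=\derg_J$ diagonalising the $K_X$ with $K_X(Y_j)=-\alpha_j(X)JY_j$, and the orthonormal basis $X_1,\dots,X_r$ of $\mfa_r=\derg_r$ with $f(X_k,X_m)=-\delta_{km}\lambda_k X_k$. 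The hypothesis $[J\derg,\derg_J]=\derg_J$ forces each $\alpha_j\ne 0$; since $\derg=\im\omega$ and the only surviving contributions to the $\derg_J$-part of the bracket come through the $K_X$ and the $B_Z$, I must also show the $\lambda_k$ are non-zero (otherwise $X_k\notin\derg$). The remaining work is to pin down $B_Z$ for $Z\in V_J=U_J$: as noted in the text preceding the theorem, $B_Z$ commutes with every $A_X$ and hence preserves $\mfa_r\oplus\mfa_J$; on $\mfa_r$ it commutes with the $F_{X}$ which have distinct real eigenvalues $-\lambda_k$ (after possibly refining the basis), and feeding $Z\in U_J$ together with two vectors from $\mfa_r\cup J\mfa_r$ into~\eqref{eq:Kahlersheardata} and using $\omega(J\mfa_r,J\mfa_r)=0$ should kill the $\mfa_r$-component of $B_Z$; on $\mfa_J$ it commutes with the $K_X\in\un(\mfa_J)$ and another substitution into~\eqref{eq:Kahlersheardata} (with arguments $Y,Z,JX$ or $Y,\hat Y,Z$) forces $B_Z$ to be skew-Hermitian and simultaneously diagonal with the $K_X$, yielding $B_Z(Y_j)=-\beta_j(Z)JY_j$ for one-forms $\beta_j\in V_J^*$. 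Translating $A_X=\omega(JX,\any)|_\mfa$ and $B_Z=\omega(Z,\any)|_\mfa$ back through $-\omega=[\any,\any]_\mfg$ and $[JX,Y]=\omega(JX,Y)$ etc.\ then gives exactly the three families of brackets claimed, and one checks no other brackets survive because $\omega$ vanishes on $\Lambda^2\mfa$, on $U_J\otimes U_J$, on $J\mfa_r\otimes J\mfa_r$, and $G_X=h=0$.

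For the ``if'' direction I would take the listed data --- a complex unitary basis $Y_1,\dots,Y_s$, an orthonormal basis $X_1,\dots,X_r$ with $JX_k$ orthogonal to all $X_m$, non-zero forms $\alpha_j\in(\derg_r)^*$, forms $\beta_j\in V_J^*$, non-zero reals $\lambda_k$ --- define a bracket on the vector space $\derg_J\oplus V_r\oplus V_J$ by the displayed formulas (extended by anti-symmetry and complex linearity on $\derg_J$, and zero otherwise), and verify: (a)~the Jacobi identity holds, equivalently $\mfg$ is two-step solvable with $\derg=\spa{Y_1,JY_1,\dots,X_1,\dots,X_r}$ --- here the key point is that the only brackets are into $\derg$, that $\derg$ is abelian (all the generators $Y_j,X_k$ commute among themselves since e.g.\ $[X_k,X_m]=0$ and $[Y_i,Y_j]=0$), and that $\mathrm{ad}(JX)$, $\mathrm{ad}(Z)$ act as commuting operators; (b)~$\derg$ really is the derived algebra, which needs $\alpha_j\ne0$ (so $JY_j=[JX,Y_j]/\alpha_j(X)$ for suitable $X$) and $\lambda_k\ne0$; (c)~the stated $\derg_J$, $\derg_r$, $V_J$ match the abstract definitions from Definition~\ref{def:V}, which is immediate once one observes $\derg\cap J\derg=\spa{Y_j,JY_j}$ because $JX_k\perp\derg$ and $J\derg_r\cap\derg=0$. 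Then I encode the structure as pre-shear data $\omega=-[\any,\any]_\mfg$ with values in $\mfa=\derg$, check $\omega|_{\Lambda^2\mfa}=0$, verify the complex shear conditions~\eqref{eq:complexsheardata}, and finally verify~\eqref{eq:Kahlersheardata}: because $\sigma$ pairs $A$ with $JA$ and $\im\omega=\derg$, the only potentially non-zero terms in $\cA(\sigma(\omega(\any,\any),\any))$ involve arguments $Y_j, JY_j$ together with an $\mathrm{ad}$-generator, or $X_k,JX_k$ with a generator, and the skew-Hermitian form of the $K_X,B_Z$ on $\derg_J$ together with $f(X_k,X_m)=-\delta_{km}\lambda_kX_k$ makes each such triple cancel in antisymmetrisation --- so Lemma~\ref{le:Kahlersheardata} gives that the shear, i.e.\ $(\mfg,g,J)$ itself, is K\"ahler.

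I expect the main obstacle to be the determination and bookkeeping of $B_Z$: one has to show (i)~$B_Z$ preserves the splitting $\mfa=\mfa_r\oplus\mfa_J$ (sketched already in the text via commutation with $A_X$ and the eigenvalue separation), (ii)~its $\mfa_r$-block vanishes, and (iii)~its $\mfa_J$-block is simultaneously complex-diagonal with all the $K_X$ and skew-Hermitian, so that it is captured by the single one-form $\beta_j$ for each $j$. Step~(iii) in particular requires choosing which substitutions into~\eqref{eq:Kahlersheardata} isolate the symmetric and Hermitian parts of $B_Z$ without interference from the already-understood terms; this is the same style of argument as in the proof of Lemma~\ref{le:Kahlersheardataconsequences} but with $Z\in U_J$ playing the role previously played by $JX\in J\mfa_r$. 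A secondary subtlety is making sure that, in the synthetic ``if'' direction, one does not inadvertently create extra elements of $\derg\cap J\derg$ or extra brackets: this is handled by the orthogonality assumption $J\derg_r\perp\derg_r$ (equivalently $\mfa_r=\derg_r$ in the notation of the shear) and by the fact that $\lambda_k\ne0$ keeps $X_k$ in $\derg$ while $JX_k\notin\derg$. The remaining verifications --- Jacobi, the complex shear conditions, and the K\"ahler shear equation --- are routine once the brackets are written in the stated normal form.
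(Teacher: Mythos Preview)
Your overall architecture is right and matches the paper's, but there is a real gap in step~(iii) for $b_Z\coloneqq B_Z|_{\mfa_J}$. Inserting $Z,\tilde Y,\hat Y$ into~\eqref{eq:Kahlersheardata} gives only $b_Z\in\sP(\mfa_J,\sigma)$, not $b_Z\in\un(\mfa_J,g,J)$. The ``same style as Lemma~\ref{le:Kahlersheardataconsequences}'' does not carry over: there the conclusion $K_X\in\un$ used that $[J,K_X]=0$ is already known from Lemma~\ref{le:conscomplexsheardata}(ii), but no such $J$-linearity is available for $b_Z$. The paper closes this gap via Lemma~\ref{lem:split-B}, whose proof combines the integrability relation $b_Z+Jb_ZJ+Jb_{JZ}-b_{JZ}J=0$ (coming from the second equation in~\eqref{eq:complexsheardata}) with the separate linear-algebra Lemma~\ref{le:symplecticendosareunitary}; the latter is non-trivial and is exactly what forces a pair of commuting symplectic endomorphisms satisfying that relation to be unitary. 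Your plan does not invoke either ingredient.

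There are two smaller issues. First, you do not address $\omega(U_J,J\mfa_r)$ at all: this is not covered by $B_Z$ (which is $\omega(Z,\any)|_{\mfa}$), nor by $\omega(U_J,U_J)=0$ or $\omega(J\mfa_r,J\mfa_r)=0$, and the paper handles it by a separate substitution of $Z,JX,Y$ into~\eqref{eq:Kahlersheardata} together with $\omega^r(JZ,JX)=\omega^r(Z,X)$. Second, your argument for $c_Z\coloneqq B_Z|_{\mfa_r}=0$ via ``distinct real eigenvalues $-\lambda_k$'' is circular: at this stage the $\lambda_k$ could be zero or repeat, and $\lambda_k\neq0$ is only deduced \emph{after} $c_Z=0$ (since then $f$ alone must surject onto $\mfa_r$). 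The paper instead shows $c_Z$ is $g$-symmetric (insert $JZ,J\tilde X,J\hat X$), takes a common eigenbasis for all $c_Z$ and $F_X$, and then the substitution $Z,X_j,JX_j$ gives $g(c_Z(X_j),X_j)=0$, whence $c_Z=0$. Finally, in the ``only if'' direction you also need to \emph{derive} $J\derg_r\perp\derg_r$; the paper obtains it from $0=\cA(\sigma(\omega(\any,\any),\any))(X_j,X_k,JX_k)=\lambda_k\,g(JX_k,X_j)$ once $\lambda_k\neq0$ is in hand.
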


For the proof, we first need the following result.

\begin{lemma}
  \label{le:symplecticendosareunitary}
  Let \( V \) be \( 2n \)-dimensional vector space endowed with a
  Hermitian structure \( (g,J) \) and denote by \( \sigma \) the
  associated fundamental two-form.
  Suppose \( A_1,A_2\in \sP(V,\sigma) \) satisfy \( [A_1,A_2]=0 \) and
  \( A_1+JA_1J+JA_2-A_2 J=0 \).
  Then we have \( A_1,A_2\in \un(V,g,J) \).
\end{lemma}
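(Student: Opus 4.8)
The statement is a linear-algebra fact, so the plan is to work entirely inside the fixed Hermitian vector space $(V,g,J)$ and reduce everything to showing $[A_i,J]=0$ for $i=1,2$. Recall that $A\in\sP(V,\sigma)$ means $\sigma(A\any,\any)+\sigma(\any,A\any)=0$, equivalently $g(JA\any,\any)$ is symmetric, i.e.\ $(JA)^* = JA$ with respect to $g$; and $A\in\un(V,g,J)$ means $A\in\sP(V,\sigma)$ \emph{and} $[A,J]=0$. So the content is exactly: the two hypotheses force $[A_1,J]=[A_2,J]=0$. The idea is to split each $A_i$ into its $J$-commuting and $J$-anticommuting parts, $A_i = A_i^+ + A_i^-$ with $A_i^+=\tfrac12(A_i - JA_iJ)$ (commuting with $J$) and $A_i^-=\tfrac12(A_i + JA_iJ)$ (anticommuting with $J$), and to show the hypotheses kill $A_1^-$ and $A_2^-$.

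First I would rewrite the second hypothesis $A_1 + JA_1J + JA_2 - A_2J = 0$ in terms of these parts. Since $A_1 + JA_1J = 2A_1^-$ and $JA_2 - A_2 J = J(A_2^+ + A_2^-) - (A_2^+ + A_2^-)J = 2JA_2^-$ (the $+$ part cancels because it commutes with $J$), the hypothesis becomes simply $A_1^- = -JA_2^-$, equivalently $A_1^- = -JA_2^-$ and hence also $A_2^- = JA_1^-$. Thus the anticommuting parts are linked by $J$. Next, note that $\sP(V,\sigma)$ is $J$-invariant under conjugation: if $A\in\sP(V,\sigma)$ then $JAJ\in\sP(V,\sigma)$ as well (a direct check using $J^*=-J$ and $J^2=-\id$), so $A^+,A^-\in\sP(V,\sigma)$ separately. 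Moreover an endomorphism that anticommutes with $J$ and lies in $\sP(V,\sigma)$ satisfies: $JA^-$ is again in $\sP(V,\sigma)$ \emph{and} commutes with $J$, hence $JA^-\in\un(V,g,J)$; in particular $g$-symmetry of $J\cdot(JA^-) = -A^-$ tells us $A^-$ is $g$-symmetric, while from $A^-\in\sP$ we also get $JA^-$ is $g$-symmetric. So $A^-$ and $JA^-$ are both symmetric, which combined with $A^-J = -JA^-$ will be exploited below.

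The main obstacle, and the place where the commutativity hypothesis $[A_1,A_2]=0$ must enter, is squeezing out that $A_1^-=A_2^-=0$. Here is how I would use it. From $[A_1,A_2]=0$ and $A_i = A_i^+ + A_i^-$, the part of the commutator that anticommutes with $J$ must vanish: that part is $[A_1^+,A_2^-] + [A_1^-,A_2^+]$, and the part commuting with $J$ gives $[A_1^+,A_2^+] + [A_1^-,A_2^-]=0$. Substituting $A_1^- = -JA_2^-$ turns $[A_1^-,A_2^-]$ into $-[JA_2^-,A_2^-] = -J[A_2^-,A_2^-] - [J,A_2^-]A_2^-$; using $[J,A_2^-] = JA_2^- - A_2^-J = 2JA_2^-$ (anticommuting) this is $-2JA_2^-A_2^- = -2J(A_2^-)^2$. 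So $[A_1^+,A_2^+] = 2J(A_2^-)^2$. Now take the trace: $\tr[A_1^+,A_2^+]=0$, hence $\tr\big(J(A_2^-)^2\big)=0$. Finally I would argue this forces $A_2^-=0$: since $A_2^-$ is $g$-symmetric and anticommutes with $J$, the operator $(A_2^-)^2$ is $g$-symmetric, positive semidefinite, and \emph{commutes} with $J$; on each $J$-eigenspace (over $\bC$) it is a nonnegative self-adjoint operator, and $\tr(J(A_2^-)^2)=0$ together with $J$ acting as $\pm i$ and the reality/conjugation symmetry between the two eigenspaces forces $(A_2^-)^2=0$, whence $A_2^-=0$ by self-adjointness, and then $A_1^-=-JA_2^-=0$ as well. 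Therefore $[A_1,J]=[A_2,J]=0$ and, since $A_1,A_2\in\sP(V,\sigma)$ by hypothesis, $A_1,A_2\in\un(V,g,J)$. The one delicate point to get right in the write-up is the trace argument in the last step — making sure the pairing of the two conjugate $J$-eigenspaces is handled correctly so that a vanishing trace really yields $(A_2^-)^2=0$ rather than merely a sign cancellation; an alternative, perhaps cleaner, route is to pair $\tr(J(A_2^-)^2)$ against the positive-definite Hermitian form and read off positivity directly.
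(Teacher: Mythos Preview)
Your decomposition into \(J\)-commuting and \(J\)-anticommuting parts, the reduction of the second hypothesis to \(A_1^{-}=-JA_2^{-}\), and the identity
\[
  [A_1^{+},A_2^{+}] \;=\; 2J(A_2^{-})^{2}
\]
obtained from the \(J\)-commuting part of \([A_1,A_2]=0\) are all correct and match the paper exactly (with \(A_i^{+}=A_i^{J}\), \(A_i^{-}=A_i^{J-}\)). The gap is in the final step. Taking the trace of the displayed identity gives \(\tr\bigl(J(A_2^{-})^{2}\bigr)=0\), but this is \emph{automatic} and carries no information: \((A_2^{-})^{2}\) commutes with \(J\), and for any real operator \(B\) commuting with \(J\) the eigenvalues of \(JB\) occur in complex conjugate pairs \(\pm i\lambda\), so \(\tr(JB)=0\) identically. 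The ``sign cancellation'' you worry about is exact, not merely potential, so neither your eigenspace pairing argument nor the suggested alternative of ``pairing against the positive-definite Hermitian form'' can rescue this particular trace.

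There is, however, a one-line fix that makes your approach go through and is arguably cleaner than the paper's: multiply the identity by \(J\) on the left \emph{before} taking the trace. Since \(J\) commutes with \(A_1^{+}\) and \(A_2^{+}\), the left-hand side becomes \(J[A_1^{+},A_2^{+}]=[JA_1^{+},A_2^{+}]\), still a commutator, hence traceless; the right-hand side becomes \(2J^{2}(A_2^{-})^{2}=-2(A_2^{-})^{2}\). Thus \(\tr\bigl((A_2^{-})^{2}\bigr)=0\), and since \(A_2^{-}\) is \(g\)-symmetric this equals \(\lVert A_2^{-}\rVert^{2}\), forcing \(A_2^{-}=0\) and then \(A_1^{-}=-JA_2^{-}=0\). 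By contrast, the paper reaches the same operator identity but, instead of a trace, evaluates \(g\bigl(Jv,[A_1^{+},A_2^{+}]v\bigr)\) on vectors \(v\) satisfying \(A_1^{+}v=cJv\) (these span \(V\) because \(A_1^{+}\in\un(V,g,J)\)), shows this vanishes by a direct skew-adjointness computation, and reads off \(\lVert A_1^{-}v\rVert^{2}=0\) vector by vector.
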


\begin{proof}
  For \( i=1,2 \), decompose \( A_i=A_i^J+A_i^{J-} \) into the sum of
  its \( J \)-invariant part~\( A_i^J \) and its
  \( J \)-anti-invariant part~\( A_i^{J-} \).
  Then \( A_i^J \in \un(V,g,J) \), and \( A_i^{J-} \) is symmetric
  with respect to \( g \).

  Now
  \begin{equation*}
    0 = A_1+JA_1J+JA_2-A_2 J
    = J [A_1,J]-[A_2,J]
    = 2 J A_1^{J-}-2 A_2^{J-},
  \end{equation*}
  so
  \begin{equation*}
    A_2^{J-} = JA_1^{J-}.
  \end{equation*}
  Moreover, the \( J \)-invariant part of \( 0=[A_1,A_2] \) yields
  \begin{equation*}
    0 = [A_1^J,A_2^J] + [A_1^{J-},A_2^{J-}]
    = [A_1^J,A_2^J] - 2 J A_1^{J-} A_1^{J-}.
  \end{equation*}
  Since \( A_1^J\in \un(V,g,J) \), there exists a basis of \( V \)
  consisting of vectors \( v\in V \) with \( A_1^J v=c v \) and
  \( A_1^{J-} Jv=-cv \) for some \( c\in \bR \).
  For such a vector \( v \), we obtain
  \begin{equation*}
    \begin{split}
      \MoveEqLeft
      g(Jv,[A_1^J,A_2^J]v) \\
      &= g(Jv, A_1^J A_2^J v)-g(Jv, A_2^J A_1^J v)
        = -g(A_1^J Jv, A_2^J v)-c\, g(Jv, A_2^J Jv)\\
      &= c\, g(v, A_2^J v)-c\, g(Jv, J A_2^J v)
        = c\, g(v, A_2^J v)-c\, g(v, A_2^J v)=0.
    \end{split}
  \end{equation*}
  Hence,
  \begin{equation*}
    0=g(Jv,J A_1^{J-} A_1^{J-}v)=g(v, A_1^{J-}
    A_1^{J-}v)=g(A_1^{J-}v,A_1^{J-}v) = \norm{A_1^{J-}v}^2.
  \end{equation*}
  This shows \( A_1^{J-}=0 \) and so also \( A_2^{J-}=J A_1^{J-}=0 \),
  finishing the proof.
\end{proof}

\begin{lemma}
  \label{lem:split-B}
  If the shear in Lemma~\ref{le:Kahlersheardata} is K\"ahler and for
  each \( Z \in U_{J} \), \( B_{Z} \) preserves the splitting
  \( \mfa = \mfa_{J} + \mfa_{r} \), then the \( Y_{j} \) in
  Lemma~\ref{le:Kahlersheardataconsequences} may be chosen so that
  \( B_{Z}(Y_{j}) = - \beta_{j}(Z)JY_{j} \), for all \( j \), for some
  \( \beta_{j} \in U_{J}^{*} \).
\end{lemma}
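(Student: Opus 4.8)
The goal is to refine the complex unitary basis $Y_1,\dots,Y_s$ of $\mfa_J$ produced in Lemma~\ref{le:Kahlersheardataconsequences} — where it only diagonalised the $K_X$ — so that it \emph{simultaneously} diagonalises all the $B_Z$ for $Z \in U_J$ in the form $B_Z(Y_j) = -\beta_j(Z)JY_j$. The strategy is to show that each $B_Z$ restricted to $\mfa_J$ lies in $\un(\mfa_J,g,J)$ and that the whole family $\{K_X : X\in\mfa_r\}\cup\{B_Z : Z\in U_J\}$ is a commuting family of skew-Hermitian (equivalently, unitary-algebra) endomorphisms of $\mfa_J$; such a family is simultaneously complex-diagonalisable, and the common eigenbasis is the desired $Y_1,\dots,Y_s$, with $-\beta_j(Z)$ reading off the (imaginary) eigenvalue of $B_Z$ and $-\alpha_j(X)$ that of $K_X$.

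\textbf{Key steps.} First I would record what is already available: by Lemma~\ref{le:conscomplexsheardata}(v) the endomorphisms $A_X$ and $B_Z$ all commute with one another; since $B_Z$ preserves the splitting $\mfa = \mfa_J \oplus \mfa_r$ by hypothesis, its block $B_Z|_{\mfa_J}$ commutes with $A_X|_{\mfa_J} = K_X$ for every $X\in\mfa_r$, and the $B_Z|_{\mfa_J}$ commute among themselves. Second, I would show $B_Z|_{\mfa_J} \in \sP(\mfa_J,\sigma)$: evaluating the K\"ahler condition~\eqref{eq:Kahlersheardata} on $\tilde Y,\hat Y \in \mfa_J$ and $Z \in U_J$ gives $0 = \sigma(B_Z(\tilde Y),\hat Y) + \sigma(\tilde Y, B_Z(\hat Y))$, exactly as in the proof of Lemma~\ref{le:Kahlersheardataconsequences} for $K_X$ (using $\omega(U_J,U_J)=0$ so the third term drops out). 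Third, I would upgrade this to $B_Z|_{\mfa_J}\in\un(\mfa_J,g,J)$. The cleanest route is Lemma~\ref{le:symplecticendosareunitary}: one needs a partner symplectic endomorphism $A$ with $[B_Z|_{\mfa_J}, A]=0$ and $B_Z|_{\mfa_J} + J(B_Z|_{\mfa_J})J + JA - AJ = 0$; taking $A$ to be an appropriate combination of the $K_X$, which are already in $\un(\mfa_J,g,J)$ and commute with $B_Z|_{\mfa_J}$, or directly re-running the diagonalisation argument of Lemma~\ref{le:symplecticendosareunitary} with the commuting unitary $K_X$ in the role of $A_1^J$, forces the $J$-anti-invariant part of $B_Z|_{\mfa_J}$ to vanish. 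Once every $B_Z|_{\mfa_J}$ and every $K_X$ lies in the commutative-closure of a set inside $\un(\mfa_J,g,J)$, simultaneous complex diagonalisation yields the unitary basis $Y_1,\dots,Y_s$ with purely imaginary eigenvalues, and we define $\beta_j \in U_J^*$ by $B_Z(Y_j) = -\beta_j(Z)JY_j$; $\bR$-linearity of $Z\mapsto B_Z$ makes $\beta_j$ linear. Finally I would check this common eigenbasis is still compatible with the $K_X$-normalisation, so the conclusions of Lemma~\ref{le:Kahlersheardataconsequences} continue to hold for the new choice.

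\textbf{Main obstacle.} The delicate point is the third step — showing $B_Z|_{\mfa_J}$ is genuinely skew-Hermitian and not merely symplectic. Symplectic-plus-commuting-with-a-unitary is not in general enough to force unitarity; one really has to exploit the specific algebraic relation among the $A_X$ and $B_Z$ coming from~\eqref{eq:complexsheardata} (the first equation there, antisymmetrisation of $\omega(\omega(\any,\any),\any)$), evaluated on suitable triples drawn from $\mfa_J$, $J\mfa_r$ and $U_J$, to obtain precisely the hypothesis $B_Z|_{\mfa_J} + J(B_Z|_{\mfa_J})J + J A - A J = 0$ of Lemma~\ref{le:symplecticendosareunitary} with a concrete $J$-invariant symplectic $A$. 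Pinning down that identity — i.e.\ identifying which evaluation of the complex-shear-data condition produces the anti-invariant-part cancellation — is where the real work lies; the rest is the standard linear algebra of commuting families in $\un$.
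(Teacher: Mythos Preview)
Your overall architecture is exactly that of the paper: show that each \(b_Z \coloneqq B_Z|_{\mfa_J}\) is symplectic, upgrade to unitary via Lemma~\ref{le:symplecticendosareunitary}, then simultaneously diagonalise the commuting family \(\{K_X\}\cup\{b_Z\}\) inside \(\un(\mfa_J,g,J)\). Steps one, two and four are fine.

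The gap is precisely where you flagged it, in step three, but you are looking in the wrong place for the missing identity. You propose to pair \(b_Z\) with ``an appropriate combination of the \(K_X\)'' and to extract the relation \(b_Z + Jb_ZJ + JA - AJ = 0\) from the \emph{first} equation in~\eqref{eq:complexsheardata}. Neither works. Since every \(K_X\) is already complex, \(JK_X - K_XJ = 0\), so the identity you would need collapses to \(b_Z + Jb_ZJ = 0\), which is exactly the statement that \(b_Z\) is complex --- the very thing you are trying to prove. And evaluating the first equation of~\eqref{eq:complexsheardata} on triples from \(\mfa_J\), \(J\mfa_r\), \(U_J\) only reproduces the commutation \([b_Z,K_X]=0\); it does not produce a \(J\)-twist relation.

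The correct partner for \(b_Z\) in Lemma~\ref{le:symplecticendosareunitary} is \(b_{JZ}\), and the identity
\[
  b_Z + Jb_ZJ + Jb_{JZ} - b_{JZ}J = 0
\]
comes from the \emph{second} equation in~\eqref{eq:complexsheardata}, the integrability condition \(J^*\omega = \omega - J\circ J.\omega\), evaluated on \((JZ,JY)\) with \(Z\in U_J\), \(Y\in\mfa_J\). Both \(b_Z\) and \(b_{JZ}\) are symplectic by your step two, and they commute by Lemma~\ref{le:conscomplexsheardata}(v), so Lemma~\ref{le:symplecticendosareunitary} applies directly and gives \(b_Z,b_{JZ}\in\un(\mfa_J,g,J)\). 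With that in hand, the rest of your argument goes through unchanged.
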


\begin{proof}
  Write \( B_Z = b_Z+c_Z \) with \( b_Z\in \End(\mfa_J) \) and
  \( c_Z\in \End(\mfa_r) \).
  Then \( [b_Z,b_{JZ}]=0 \) and \( [b_Z,K_X]=0 \) for all
  \( X\in \mfa_r \).
  Moreover, the second equation in~\eqref{eq:complexsheardata} yields
  \begin{equation*}
    \begin{split}
      b_{JZ} J(Y)
      &=J^*\omega(JZ,JY)=\omega(Z,Y)+J(\omega(JZ,Y)+\omega(Z,JY))\\
      &=b_Z(Y)+J b_{JZ}(Y)+Jb_Z J (Y)
    \end{split}
  \end{equation*}
  for all \( Y\in \mfa_J \), and so
  \begin{equation*}
    b_Z+Jb_Z J+J b_{JZ}-b_{JZ} J=0.
  \end{equation*}
  Inserting \( Z,\tilde{Y},\hat{Y} \), where
  \( \tilde{Y},\hat{Y}\in \mfa_J \), into~\eqref{eq:complexsheardata}
  gives
  \begin{equation*}
    0 = \sigma(b_Z(\tilde{Y}),\hat{Y})
    + \sigma(\tilde{Y},(b_Z(\hat{Y})),
  \end{equation*}
  so \( b_Z\in \sP(\mfa_J,\sigma) \).
  Thus, we deduced from Lemma~\ref{le:symplecticendosareunitary} that
  \( b_Z\in \un(\mfa_J,g,J) \).
  Lemma~\ref{le:conscomplexsheardata} gives that all \( b_Z \) commute
  pairwise and with all \( K_X \), \( X\in \mfa_r \), and the result
  follows.
\end{proof}

\begin{proof}[Proof of Theorem~\ref{th:KahlerdergdergJ=dergJ}]
  Using shear data, let \( Z\in U_J \) be given.
  As we argued before Theorem~\ref{th:KahlerdergdergJ=dergJ},
  \( B_Z \)~preserves the subspaces \( \mfa_J \) and \( \mfa_r \), so
  we may apply Lemma~\ref{lem:split-B} to get the \( Y_{j} \),
  \( \alpha_{j} \) and \( \beta_{j} \).
  The hypothesis \( [J\derg,\derg_{J}] = \derg_{J} \) implies that
  each \( \alpha_{j} \) is non-zero.

  Next, by Lemma~\ref{le:conscomplexsheardata}, we have
  \( \omega^{r}(JZ,JX) = \omega^{r}(Z,X) = c_Z(X) \) for any
  \( Z\in U_J \), \( X\in \mfa_r \).  Hence,
  \begin{equation*}
    \begin{split}
      0
      &= \cA(\sigma(\omega(\any,\any),\any))(JZ, J\tilde{X},
        J\hat{X})
        = \sigma(c_Z(\tilde{X}),J\hat{X}) -
        \sigma(c_Z(\hat{X}),J\tilde{X})\\
      &=g(c_Z(\tilde{X}),\hat{X}) - g(c_Z(\hat{X}),\tilde{X}),
    \end{split}
  \end{equation*}
  and \( c_Z \) is symmetric.
  Since all \( c_Z \) commute pairwise and with all \( F_X \),
  \( X\in \mfa_r \), by Lemma~\ref{le:conscomplexsheardata}, there is
  a common eigenbasis \( X_1,\dots,X_r \) of \( \derg_r \) for all
  these operators.
  Moreover, by Lemma~\ref{le:Kahlersheardataconsequences}, there exist
  \( \lambda_1,\dots,\lambda_r\in \bR \) with
  \begin{equation*}
    f(X_j,X_k) = F_{X_j}(X_k) = - \delta_{jk}\lambda_k X_k.
  \end{equation*}
  Next,
  \begin{equation*}
    \begin{split}
      0
      &= \cA(\sigma(\omega(\any,\any),\any))(Z,X_j,JX_j)
        = \sigma(c_Z(X_j),JX_j) - \sigma(\omega(Z,JX_j),X_j)\\
      &= g(c_Z(X_j),X_j) + \sigma(c_{JZ}(X_j),X_j)
        = g(c_Z(X_j),X_j)
    \end{split}
  \end{equation*}
  for any \( j\in \{1,\dots,r\} \) since
  \( c_{JZ}(X_j)\in \spa{X_j} \).
  As \( c_Z(X_j)\in \spa{X_j} \), we get \( c_Z(X_j)=0 \) and hence
  \( c_Z=0 \).
  So \( \omega^{r}(U_J,J\mfa_r)=0 \) as well.
  Hence, \( f\colon \mfa_r\times \mfa_r\rightarrow \mfa_r \) has to be
  surjective in order to have \( \im(\omega)=\mfa_r \) and thus
  \( \lambda_j\neq 0 \) for all \( j=1,\dots,r \).

  Next, inserting \( X_j,X_k, JX_k \) for \( j,k\in \{1,\dots,r\} \)
  with \( j\neq k \) into~\eqref{eq:Kahlersheardata} yields
  \begin{equation*}
    0 = \cA(\sigma(\omega(\any,\any),\any))(X_j,X_k,JX_k)
    = -\sigma(f(X_k,X_k),X_j)
    = \lambda_k g(JX_k,X_j).
  \end{equation*}
  Since \( \lambda_k\neq 0 \) and trivially \( g(JX_k,X_k)=0 \) holds,
  this implies \( X_k\perp J\mfa_r \), so \( \mfa_r\perp J\mfa_r \).

  We now have all the claimed properties except that
  \( \omega(Z,JX) = 0 \) for \( Z\in U_J \), \( X\in \mfa_r \).
  However, we already showed that \( \omega(Z,JX)\in \mfa_J \).
  Moreover, inserting \( Z,JX,Y \) into~\eqref{le:Kahlersheardata} for
  \( Y\in \mfa_J \), \( X\in \mfa_r \) and \( Z\in U_J \) yields
  \begin{equation*}
    0=\cA(\sigma(\omega(\any,\any),\any))(Z,JX,Y)
    =\sigma(\omega(Z,JX),Y).
  \end{equation*}
  Thus, \( \omega(Z,JX)=0 \) for all \( X\in \mfa_r \),
  \( Z\in U_J \), which completes the proof.
\end{proof}

For the three different pure types we arrive at the classification
below.

\begin{corollary}
  \label{co:Kahlerpuretype}
  Let \( (\mfg,g,J) \) be a \( 2n \)-dimensional almost Hermitian Lie
  algebra that is two-step solvable.  Then we have the following.
  \begin{enumerate}
  \item[\textup{(I)}] \( (\mfg,g,J) \) is K\"ahler of pure type~I if
    and only if \( J\derg \perp \derg \), and there exists an
    orthonormal basis \( X_1,\dots,X_r \) of \( \derg \) and
    \( \lambda_1,\dots,\lambda_r\in \bR\setminus \{0\} \) such that
    the only non-zero Lie brackets (up to anti-symmetry) are given by
    \begin{equation*}
      [JX_j,X_j]=\lambda_j X_j
    \end{equation*}
    for \( j=1,\dots,r \).
  \item[\textup{(II)}] \( (\mfg,g,J) \) is K\"ahler of pure type~II if
    and only if there exists a complex unitary basis
    \( Y_1,\dots,Y_s \) of \( \derg \) and non-zero one-forms
    \( \beta_1,\dots,\beta_s\in V_J^* \) such that the only non-zero
    Lie brackets (up to anti-symmetry and complex-linear extension)
    are given by
    \begin{equation*}
      [Z,Y_j]=\beta_j(Z) JY_j
    \end{equation*}
    for \( j\in \{1,\dots,s\} \) and \( Z\in V_J \).
  \item[\textup{(III)}] \( (\mfg,g,J) \) is K\"ahler of pure type~III
    if and only if \( J\derg_r\perp \derg_r \), and there exist a
    complex unitary basis \( Y_1,\dots,Y_s \) of \( \derg_J \), an
    orthonormal basis \( X_1,\dots,X_r \) of \( \derg_r \), non-zero
    one-forms
    \( \alpha_1,\dots,\alpha_s\in (\derg_r)^*\setminus \{0\} \) on
    \( \derg_r \) and non-zero real numbers
    \( \lambda_1,\dots,\lambda_k\in \bR\setminus \{0\} \) such that
    the only non-zero Lie brackets (up to anti-symmetry and
    complex-linear extension) are given by
    \begin{equation*}
      [JX_{k},Y_j] = \alpha_j(X_{k}) JY_j,\qquad
      [JX_k,X_k]=\lambda_k X_k.
    \end{equation*}
  \end{enumerate}
\end{corollary}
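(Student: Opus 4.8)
The plan is to deduce all three cases from Theorem~\ref{th:KahlerdergdergJ=dergJ}. Pure types~I and~III satisfy its hypothesis \( [J\derg,\derg_J]=\derg_J \), so there the statements are obtained simply by specialisation; pure type~II lies outside that hypothesis but turns out to be a strict simplification of the same shear-data analysis. I expect the only genuine work to be in pure type~III: one has to check that \( [J\derg,\derg_J]=\derg_J \) is automatic there and to keep track of which components of \( \omega \) contribute to \( \derg \), though this has essentially been carried out in the discussion preceding Theorem~\ref{th:KahlerdergdergJ=dergJ}.

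For pure type~I we have \( \derg_J=0 \), hence \( s=0 \) and \( [J\derg,\derg_J]=\derg_J \) holds trivially. Specialising Theorem~\ref{th:KahlerdergdergJ=dergJ} to \( s=0 \) deletes the vectors \( Y_j \) and the forms \( \alpha_j,\beta_j \) and leaves precisely the brackets \( [JX_k,X_k]=\lambda_kX_k \) with \( \lambda_k\neq0 \), together with \( J\derg\perp\derg \) (which is the condition \( J\derg_r\perp\derg_r \) for \( \derg_r=\derg \)); this is~(I). Conversely, if an almost Hermitian Lie algebra has these brackets then the same theorem makes it two-step solvable and K\"ahler, and \( J\derg\perp\derg \) gives \( \derg_J=\derg\cap J\derg=0 \), i.e.\ pure type~I.

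For pure type~III we have \( V_J=0 \). As noted before Theorem~\ref{th:KahlerdergdergJ=dergJ}, Lemmas~\ref{le:conscomplexsheardata} and~\ref{le:Kahlersheardataconsequences} give \( G_X=0 \), \( h=0 \) and \( \omega(J\mfa_r,J\mfa_r)=0 \), so the only part of \( \omega \) taking values in \( \derg_J \) is the family \( K_X \), \( X\in\mfa_r \); since \( \im\omega=\derg \) this forces \( \spa{K_X(Y)}=\derg_J \), i.e.\ \( [J\derg,\derg_J]=\derg_J \). Hence Theorem~\ref{th:KahlerdergdergJ=dergJ} applies, and with \( V_J=0 \) (no \( Z \), no \( \beta_j \)) it reduces to~(III). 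The converse is as in type~I, using that the listed brackets give \( \mfg=\derg+J\derg \).

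Finally, for pure type~II we have \( \derg_r=0 \), so \( \mfa=\mfa_J=\derg \) is complex and \( U_J=V_J \). Since \( \mfa \) is complex, \( JX\in\mfa \) for \( X\in\mfa \), and \( \omega|_{\Lambda^2\mfa}=0 \) then gives \( A_X=\omega(JX,\any)|_{\mfa}=0 \) for all \( X\in\mfa \), so in particular every \( K_X \) vanishes. Lemma~\ref{le:Kahlersheardataconsequences} gives \( \omega(U_J,U_J)=0 \), so \( \omega \) is determined entirely by the operators \( B_Z=\omega(Z,\any)|_{\mfa}\in\End(\mfa) \), \( Z\in V_J \). Each \( B_Z \) trivially preserves the splitting \( \mfa=\mfa_J \), so Lemma~\ref{lem:split-B} produces a complex unitary basis \( Y_1,\dots,Y_s \) of \( \derg \) and one-forms \( \beta_j\in V_J^* \) with \( B_Z(Y_j)=-\beta_j(Z)JY_j \), i.e.\ \( [Z,Y_j]=\beta_j(Z)JY_j \); since each \( B_Z \) is complex, \( \im\omega=\spa{\beta_j(Z)Y_j:Z\in V_J,\ j} \), and \( \im\omega=\derg \) forces every \( \beta_j\neq0 \). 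As \( [\derg,\derg]=0 \) and \( [V_J,V_J]=-\omega(V_J,V_J)=0 \), there are no further brackets, giving~(II). The converse is a direct verification: with these brackets \( \derg \) is an abelian ideal, the Jacobi identity holds because \( [V_J,V_J]=0 \) and \( [V_J,\derg]\subseteq\derg \), \( J \) is integrable, and \( d\sigma=0 \) because \( \sigma(JY_i,Y_j)=-\delta_{ij} \) and \( V_J\perp\derg \).
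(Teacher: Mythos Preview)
Your proof is correct and follows essentially the same approach as the paper: types~I and~III are handled by specialising Theorem~\ref{th:KahlerdergdergJ=dergJ} (with the hypothesis \( [J\derg,\derg_J]=\derg_J \) verified as in the discussion preceding that theorem), and type~II is handled via Lemma~\ref{lem:split-B} after noting that \( \derg_r=0 \) makes its splitting hypothesis trivial. You have simply expanded the details that the paper leaves implicit.
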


\begin{proof}
  For pure types I and III this is just specialisation of
  Theorem~\ref{th:KahlerdergdergJ=dergJ}.
  For pure type~II, we have \( \derg_{r} = 0 \), so we may use
  Lemma~\ref{lem:split-B}.
\end{proof}

\begin{remark}
  For pure type~I this implies
  \( (\mfg,J) \cong r(\aff_{\bR},J)\oplus (\bR^{2(n-r)},J) \) as Lie
  algebras with complex structures.
  Up to change of basis, complex structures on \( \aff_{\bR} \) and
  \( \bR^{2(n-r)} \) are unique.
\end{remark}

\section{Compatibility of balanced and SKT metrics}
\label{sec:compatibility}

We now consider the question of Lie groups or Lie algebras with a
complex structure that admit both a compatible balanced metric
\( \hat{g} \) and a compatible SKT metric \( \tilde{g} \).
We will say that a complex structure \( J \) is \emph{SKT},
\emph{balanced} or \emph{K\"ahler} if it admits a compatible metric
that is SKT, balanced or K\"ahler, respectively.

We first consider some general results for compact groups and for
solvable groups in \S\ref{sec:general-results}.
Thereafter, we will just consider two-step solvable Lie algebras,
considering each pure type in turn, and then specialising to the
six-dimensional case.

\subsection{General results}
\label{sec:general-results}

\begin{theorem}
  \label{thm:compact}
  Let \( J \) be a left-invariant complex structure on a compact
  group~\( G \).
  Then \( J \) is SKT, but is only balanced if \( G \) is Abelian, in
  which case it is also K\"ahler.
\end{theorem}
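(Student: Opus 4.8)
The plan is to treat the three assertions separately but lean on the structure theory already developed. For the SKT claim, I would invoke the result recalled in the introduction: every compact even-dimensional Lie group carries a left-invariant SKT structure (the cited works \cite{SSTvP,MaSw}); since $J$ here is an arbitrary left-invariant complex structure, one should check that the construction there produces a compatible metric for \emph{this} $J$, or equivalently that on a compact Lie group the bi-invariant metric is SKT for any left-invariant complex structure. Concretely, on a compact group the bi-invariant metric $g$ makes the Levi-Civita connection have parallel torsion, and the Bismut torsion $T^B$ of $(g,J)$ equals (up to sign and the usual $J^*d\sigma$ formula) the bi-invariant $3$-form $g([\cdot,\cdot],\cdot)$, whose closedness is the Jacobi identity. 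So $dJ^*d\sigma = 0$ follows. That handles ``$J$ is SKT''.

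For the balanced claim, I would argue by contradiction using the unimodularity together with Corollary~\ref{co:balancedunimodular} — except that corollary is stated for two-step solvable algebras, so instead I should use the raw content of Proposition~\ref{pro:balanced}'s proof applied in the general Hermitian setting, or better, a direct classical fact: a compact Lie group $G$ with a left-invariant balanced complex structure must be Abelian. The cleanest route: on a compact group, $\mathfrak{g} = \mathfrak{z} \oplus \mathfrak{g}'$ with $\mathfrak{g}'$ semisimple and $\mathfrak{z}$ the centre. If $\mathfrak{g}' \ne 0$, one uses that left-invariant complex structures on compact semisimple Lie groups are never balanced — this is exactly the result of \cite{FGV,P} cited right after Conjecture~\ref{conj:FV}, which says Conjecture~\ref{conj:FV} holds there because such $J$ cannot simultaneously be SKT and balanced; since it \emph{is} SKT by the first part, it cannot be balanced unless $\mathfrak{g}'=0$. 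Then $G$ is a torus, hence Abelian, and any compatible metric is flat K\"ahler. I expect the main obstacle to be making the reduction to the semisimple case airtight: a left-invariant complex structure on $G = T^k \times G_{ss}$ need not respect the splitting $\mathfrak{z} \oplus \mathfrak{g}'$, so one cannot immediately quote the semisimple result. The fix is to observe that $\mathfrak{g}' = [\mathfrak{g},\mathfrak{g}]$ is canonically defined and $J$-independent as a subspace only after intersecting with its image under $J$; what one really needs is that the balanced condition forces a constraint that, combined with semisimplicity (all traces of $\operatorname{ad}$ vanish, and the Killing form is negative definite on $\mathfrak{g}'$), is incompatible with integrability of $J$ unless $\mathfrak{g}' = 0$.

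So, in detail, the steps I would carry out are: (1) establish $J$ is SKT via the bi-invariant metric and the Jacobi identity, citing \cite{SSTvP,MaSw}; (2) suppose $J$ is balanced with compatible metric $g$, which by Proposition~\ref{pro:invariantbalancedorSKT}-style averaging (or directly, since we work with left-invariant data) we may take left-invariant; (3) combine balanced $+$ SKT — but these need not be the \emph{same} metric, so instead I would invoke Proposition~\ref{pro:balanced+SKT=Kahler} only if a single metric were both, which is not given; hence the real argument must show balanced alone forces Abelian. For that: (4) use that on a compact group $\operatorname{tr}(\operatorname{ad} X) = 0$ for all $X$, so the balanced condition reduces (via the analogue of Corollary~\ref{co:balancedunimodular}, or its general-Lie-algebra precursor) to the vanishing of the ``$C$''-type obstruction; (5) show this vanishing together with the negative-definiteness of the Killing form on the semisimple part $\mathfrak{g}'$ and $J^2 = -\mathrm{id}$ forces $\mathfrak{g}' = 0$ — the point being that $\sum [X_i, JX_i]$ paired against the Killing form is strictly negative on a nonzero compact semisimple factor, contradicting $C = 0$; (6) conclude $G$ is a torus, so $J$ is the standard complex structure on $\mathbb{R}^{2n}$ and any compatible metric is K\"ahler. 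The hard part is step (5): controlling $C$ without assuming $J$ is adapted to the $\mathfrak{z} \oplus \mathfrak{g}'$ decomposition. I would handle it by projecting $C$ to $\mathfrak{g}'$ along $\mathfrak{z}$ (both are ideals, so $[\mathfrak{g},\mathfrak{g}] \subseteq \mathfrak{g}'$ and the projection is a Lie algebra quotient), reducing to the genuinely semisimple case where the Killing-form computation is classical.
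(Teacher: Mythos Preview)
Your SKT part is fine (the paper also simply cites \cite{SSTvP,MaSw}). The genuine gap is step~(5), which you yourself flag as ``the hard part'' but do not actually resolve. The criterion you reach in step~(4)---that for a unimodular Hermitian Lie algebra balanced is equivalent to $C=\sum_i [e_i,Je_i]=0$ over a $g$-unitary basis---is indeed valid in general, so that much survives. But you must then show $C\neq 0$ for \emph{every} compatible Hermitian $g$ once $\mfk=[\mfg,\mfg]\neq 0$, and your ``Killing-form computation'' does not do this: $B(C,\cdot)$ is a linear functional and cannot be ``strictly negative'' on a subspace, while $B(C,C)<0$ presupposes the conclusion $C\neq 0$. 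The known arguments in \cite{FGV,P} for the semisimple case pass through the explicit Samelson root-space description of $J$ and are not one-liners. Your proposed reduction via projection along the centre $\lie{z}$ does give $C=\sum_i[\pi(e_i),\pi(Je_i)]\in\mfk$ (since $\lie{z}$ is central), but the projected vectors $\pi(e_i),\pi(Je_i)$ are not a unitary basis for any Hermitian structure on $(\mfk,J')$---indeed $J$ does not even descend to an almost complex structure $J'$ on $\mfk$---so there is no balanced structure on $\mfk$ to feed into \cite{FGV,P}.

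The paper's proof is entirely different and topological, sidestepping all of this. After passing to a finite cover $G=T^k\times K$ with $K$ compact semisimple simply connected, it invokes Snow's theorem \cite{Sn} to obtain a holomorphic fibration $\pi\colon G\to G/N$ over a projective base, with $N=T^k\times N_1$ a Cartan subgroup. Pulling back a complex hypersurface $Y\subset G/N$ gives $X=\pi^{-1}(Y)=T^k\times(X\cap K)$ of real codimension two. Whitehead's theorems yield $H^1(K)=H^2(K)=0$, so by Poincar\'e duality and K\"unneth $H^{2n-2}(G)=H^{k-2}(T^k)\otimes H^m(K)$; thus for balanced $\sigma$ one has $[\sigma^{n-1}]=b\otimes c$ with $c$ the top class of~$K$. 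But $c\frown[X]=0$ since $X$ carries the full torus factor, contradicting $\int_X\sigma^{n-1}>0$. No Lie-algebraic computation enters, and whether $J$ respects the splitting $\lie{z}\oplus\mfk$ is irrelevant.
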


The special case when \( G \) is also semi-simple was proved in
\cite{FGV,P}.

\begin{proof}
  Existence of the SKT metric was given in~\cite{SSTvP,MaSw}.

  Now suppose that \( G \) is not Abelian.
  It is sufficient to assume \( G \) is connected.
  Note that \( \mfg = \bR^{k} \oplus \mfk \) for some semi-simple Lie
  algebra \( \mfk \) of compact type.
  It follows that there is a finite cover of \( G \) by the group
  \( T^{k} \times K \), where \( K \)~is compact, connected and simply
  connected with Lie algebra~\( \mfk \), and it is sufficient to
  consider the case \( G = T^{k}\times K \).
  By \cite[Theorem in~(3.2)]{Sn} there is a Cartan subgroup \( N \)
  of~\( G \) and holomorphic fibration \( \pi\colon G \to G/N \) with
  \( G/N \) projective.
  Note that \( N = T^{k} \times N_{1} \), with \( N_{1} \)~a Cartan
  subgroup of~\( K \).
  Let \( Y \) be a complex submanifold of \( G/N \) of complex
  codimension one, and put \( X = \pi^{-1}(Y) \).
  Then \( X = T^{k} \times (X\cap K) \) and is of real codimension
  two.
  In particular, for the fundamental class \( [X] \) of \( X \) and
  any generator \( c \in H^{m}(K,\bZ) \cong \bZ \), we have
  \( c \frown [X] = 0 \).

  On the other hand, the Whitehead Theorems imply that
  \( H^{1}(K) = 0 = H^{2}(K) \), so by duality
  \( H^{m-1}(K) = 0 = H^{m-2}(K) \), where \( m = \dim K \).
  Writing \( 2n = k+m = \dim G \), the K\"unneth formula gives
  \( H^{2n-2}(G) = \bigoplus_{s=0}^{2} H^{k-2+s}(T^{k}) \otimes
  H^{m-s}(K) = H^{k-2}(T^{k}) \otimes H^{m}(K) \).
  If \( \sigma \) is the two-form of a balanced metric, we have
  \( d(\sigma^{n-1}) = 0 \), so
  \( [\sigma^{n-1}] = b \otimes_{\bR} c \) for some
  \( b \in H^{k-2}(T^{k}) \).
  But now
  \( 0 < \int_{X}\sigma^{n-1} = (b\otimes c) \frown [X] = b\frown
  (c\frown [X]) = 0 \), which is a contradiction.
  Thus if \( J \) is balanced, then \( K = \Set{e} \) and \( G \) is
  Abelian.
  But then \( J \) is a left-invariant complex structure on a torus
  and so admits a compatible K\"ahler metric.
\end{proof}

Now let us show that invariance of the K\"ahler metric is necessary in
Question~\ref{quest:G}.

\begin{proposition}
  \label{pro:Kahleronsolvable}
  Let \( G \) be a simply-connected solvable Lie group and let \( J \)
  be a left-invariant complex structure on \( G \).
  Then \( (G,J) \) admits a compatible K\"ahler metric.
\end{proposition}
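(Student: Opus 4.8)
The plan is to exploit the fact that a simply-connected solvable Lie group is diffeomorphic to $\bR^{2n}$, so that the complex manifold $(G,J)$ is a complex structure on a contractible manifold, and then to produce a Kähler metric by hand --- \emph{not} a left-invariant one. The cleanest route is to use that $(G,J)$ admits a global holomorphic coordinate system, i.e.\ is biholomorphic to an open subset of $\bC^n$, or at least carries enough global holomorphic functions to pull back the flat Kähler form. Concretely, I would first recall the classical result (due to the structure theory of solvable Lie groups, e.g.\ via the work going back to Gorbatsevich, or by a direct induction) that a left-invariant complex structure on a simply-connected solvable Lie group is \emph{holomorphically} diffeomorphic to a domain in $\bC^n$ --- in fact one can realise $(G,J)$ as a group of affine (or triangular) holomorphic transformations acting simply transitively on a domain. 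Granting such an embedding $\Phi\colon (G,J)\hookrightarrow \bC^n$, the metric $g \coloneqq \Phi^*(g_{\mathrm{flat}})$ is Hermitian with respect to $J$ and its fundamental form is $\Phi^*\omega_{\mathrm{flat}}$, which is closed because $\omega_{\mathrm{flat}}$ is; hence $g$ is Kähler.

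\textbf{The key steps}, in order, are: (i) reduce to $G$ connected and simply connected, which is already the hypothesis; (ii) build, by induction on $\dim G$ using a codimension-one solvable ideal $\mfh \subset \mfg$ containing $\derg$ or a one-dimensional central piece, a realisation of $G$ as acting holomorphically and simply transitively on a domain $\Omega \subset \bC^n$ --- at each stage one extends a realisation of the subgroup $H$ by the extra one-parameter factor, noting that a left-invariant complex structure restricts to one on $\mfh$ (after possibly passing to $\mfh \oplus \bR$, a standard move when $\mfh$ is not $J$-invariant) and that one-parameter holomorphic families of automorphisms integrate; (iii) transport the flat Kähler form on $\Omega$ back via the orbit map $G \to \Omega$, $g \mapsto g\cdot o$, which is a biholomorphism; (iv) conclude that the pulled-back Hermitian metric is Kähler, since $d$ commutes with pullback and $d\omega_{\mathrm{flat}} = 0$. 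One then remarks explicitly, as the paper already flags in the surrounding text, that this $g$ is in general \emph{not} left-invariant, which is exactly why Question~\ref{quest:G} is nontrivial.

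\textbf{The main obstacle} is step (ii): producing the simply-transitive holomorphic action on a domain in $\bC^n$. The subtlety is that a left-invariant complex structure need not descend to every codimension-one ideal (the ideal may fail to be $J$-invariant), so the induction has to be set up carefully --- either by always enlarging to a $J$-stable subalgebra, or by using the complexification $\mfg \otimes \bC$ and a suitable real form of a Borel-type subalgebra. An alternative, and perhaps shorter, route avoids the embedding altogether: show directly that on the contractible manifold $G$ every $J$-compatible $2$-form cohomologous to zero can be corrected to a closed one, i.e.\ use that $H^2_{dR}(G) = 0$ together with the $\partial\bar\partial$-type lemma on a Stein-like manifold to find a global Kähler potential. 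I expect the embedding argument to be the intended one, since it is the most transparent and gives a concrete Kähler metric; the work lies entirely in citing or reproving the holomorphic-domain realisation, after which closedness of the pulled-back form is immediate.
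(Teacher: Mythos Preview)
Your approach is essentially the paper's: realise \( (G,J) \) (or a quotient) holomorphically as an open set in \( \bC^{n} \) and pull back the flat K\"ahler form. The paper dispatches your ``main obstacle'' (ii) in one line by citing Snow's theorem \cite[(1.3)]{Sn}, which provides a discrete subgroup \( \Gamma \) with \( \Gamma\backslash G \) biholomorphic to an open subset \( V\subseteq\bC^{n} \); the K\"ahler metric is then the pullback of the flat metric along \( G\to\Gamma\backslash G\cong V \), so no inductive construction is needed.
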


\begin{proof}
  By the theorem in~\cite[(1.3)]{Sn}, there is a discrete subgroup
  \( \Gamma \) of \( G \) such that \( \Gamma\backslash G \) is
  biholomorphic to an open subset \( V \) of \( \bC^n \).
  Now \( \bC^n \), and hence \( V \), carries a compatible K\"ahler
  metric that we may pull back under the natural projection
  \( \pi\colon G\rightarrow \Gamma\backslash G\cong V\subseteq \bC^n
  \) to get a compatible K\"ahler metric on~\( (G,J) \).
\end{proof}

\subsection{Pure type~I}
\label{subsec:puretypeI}

Pure type~I gives \( \derg_{J} = 0 \), so \( \derg \) is totally real.

\begin{theorem}
  \label{th:compatibilitypuretypeI}
  Let \( (\mfg,J) \) be a unimodular two-step solvable Lie algebra
  \( \mfg \) with complex structure \( J \) of pure type~I that is SKT
  and is balanced.
  Then \( \mfg \) is Abelian and so \( J \) is K\"ahler.
\end{theorem}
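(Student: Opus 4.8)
The plan is to combine the balanced characterisation of Proposition~\ref{pro:balanced} (in its unimodular form, Corollary~\ref{co:balancedunimodular}) with the SKT shear equation \eqref{eq:SKTsheardata}, exploiting heavily that pure type~I means \( \derg_J = 0 \), hence \( \mfa = \derg = \mfa_r \), \( \mfa_J = 0 \), and \( V_r = \derg \oplus J\derg \). In the shear picture we identify \( \mfg = \bR^{2n} \) with its flat Kähler structure \( (g,J) \), and all of \( \omega \) takes values in \( \mfa_r \), so the endomorphisms \( A_X = \omega(JX,\any)|_{\mfa} \) are just \( F_X \in \End(\mfa_r) \), and by Lemma~\ref{le:conscomplexsheardata}(iii) the associated map \( f \) is symmetric and, by part~(v), the \( A_X = F_X \) all commute; similarly \( \omega^r(JZ,JX) = \omega^r(Z,X) \) for \( Z \in U_J = V_J \), \( X \in \mfa_r \).

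First I would feed the balanced condition into the picture. Since \( \mfg \) is unimodular and balanced, Corollary~\ref{co:balancedunimodular} gives \( C = 0 \), i.e.\ \( \sum_{i=1}^r [X_{2i-1},X_{2i}] + \sum_{j=1}^\ell [Z_{2j-1},Z_{2j}] = 0 \) for suitable unitary bases. Also unimodularity forces \( \tr(\ad(X)) = 0 \) for all \( X \), and \( \tr(\ad(JX)) = -\tr(F_X) = 0 \); so each \( F_X \) is trace-free. Next I would run the SKT equation \eqref{eq:SKTsheardata}. The strategy is to test it on carefully chosen triples drawn from \( \derg \), \( J\derg \) and \( V_J \). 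Testing on three vectors of \( J\mfa_r \) (say \( JX, J\hat X, J\bar X \)) should, using \( J^*\omega = \omega - J\circ J.\omega \) and the fact that \( \omega \) restricted to \( \mfa_r \) vanishes while \( \omega(JX,\hat X) = F_X(\hat X) = f(X,\hat X) \), collapse to a totally symmetric identity in \( g(f(\any,\any),\any) \) combined with a "cubic" term \( g(J^*\omega(\omega(\any,\any),\any),\any) \). The upshot I expect is that symmetry of \( f \) together with trace-freeness of the \( F_X \) and the SKT quadratic-cubic identity will force \( f \equiv 0 \); a clean way to see this is to show \( \sum_X g(f(X, \any), f(X, \any)) = 0 \) or an analogous positive-definite expression, exactly as in the proof of Lemma~\ref{le:symplecticendosareunitary}, so that \( f = 0 \) and hence \( \omega|_{\Lambda^2(\mfa_r \oplus J\mfa_r)} \) is controlled.

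Once \( f = 0 \), the brackets among \( \derg \oplus J\derg \) that land in \( \derg \) are killed off, and what remains is the contribution of \( \omega(Z, \any) \) for \( Z \in V_J \), i.e.\ the maps \( B_Z = \omega(Z,\any)|_{\mfa_r} \). Here I would test \eqref{eq:SKTsheardata} and \eqref{eq:complexsheardata} on triples of the form \( (Z, X, JX) \) and \( (Z, \hat Z, X) \), using \( \omega^r(JZ,JX) = \omega^r(Z,X) \) to deduce that the \( B_Z \) are symmetric and commute and then, as in Theorem~\ref{th:KahlerdergdergJ=dergJ}'s proof, that in fact \( B_Z = 0 \); combined with \( C = 0 \) from the balanced side (which controls the \( \omega(Z_{2j-1},Z_{2j}) \) part together with the \( \omega(X_{2i-1},X_{2i}) \) part, already zero once \( f = 0 \)), this forces \( \omega \equiv 0 \). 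Then \( \derg = \im\omega = 0 \), so \( \mfg \) is Abelian, and any compatible metric is flat Kähler. The main obstacle I anticipate is the bookkeeping in the SKT identity \eqref{eq:SKTsheardata}: because \( J^*\omega \) and the cubic term \( g(J^*\omega(\omega(\any,\any),\any),\any) \) mix \( \mfa_r \)-valued and \( J\mfa_r \)-directional pieces, one must organise the antisymmetrisation so that a manifestly non-negative quadratic form in \( f \) (and separately in the \( B_Z \)) appears; getting that positivity to come out cleanly — rather than a sign-indefinite mess — is the delicate point, and the trace-free condition from unimodularity is what I expect to make it work.
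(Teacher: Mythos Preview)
Your approach is quite different from the paper's and, as written, has genuine gaps.

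The paper's proof is only a few lines: it invokes the classification \cite[Theorem~5.5]{FrSw2}, which says that any two-step solvable SKT Lie algebra of pure type~I splits as \(\mfg \cong m\,\aff_\bR \oplus \mfh\) with \(\mfh\) two-step nilpotent; since \(\aff_\bR\) is not unimodular while \(\mfh\) is, unimodularity forces \(m=0\), so \(\mfg\) is two-step nilpotent; then \cite[proof of Theorem~1.1]{FiV2} handles the nilpotent case, giving K\"ahler and hence Abelian.

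Your direct route amounts to re-deriving both of these cited results from the shear equations, and several steps are not secured. First, you never separate the SKT metric~\(\tilde g\) from the balanced metric~\(\hat g\): equation~\eqref{eq:SKTsheardata} is for~\(\tilde g\), while \(C=0\) in Corollary~\ref{co:balancedunimodular} and the unitary bases there refer to~\(\hat g\); the complements \(V_J\) differ for the two metrics and this must be managed explicitly. Second, \eqref{eq:SKTsheardata} is a \emph{four}-form, so one tests on four vectors, not ``triples''; this slip suggests the promised positivity computation has not actually been carried out. The implication ``SKT plus \(\tr F_X=0\) forces \(f=0\) by positivity'' is precisely the non-trivial content of \cite[Theorem~5.5]{FrSw2} and does not drop out of a single insertion into~\eqref{eq:SKTsheardata}. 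Third, and most seriously, your final step does not close: once \(f=0\) and \(B_Z=0\) the only remaining brackets live on \(\Lambda^2 V_J\), but \(C=0\) is a \emph{single} linear relation \(\sum_j [Z_{2j-1},Z_{2j}]=0\) and cannot by itself kill \([\hat V_J,\hat V_J]\) when \(\dim V_J\geqslant 4\). One still needs to combine the SKT condition on \(\Lambda^4 \tilde V_J\) with integrability and the balanced trace condition for \(\hat g\); that is exactly the nilpotent analysis delegated to~\cite{FiV2}.
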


\begin{proof}[Proof of Theorem~\ref{th:compatibilitypuretypeI}]
  Without unimodularity, the structure of the SKT algebras is given
  in~\cite[Theorem 5.5]{FrSw2}:
  \( \mfg \cong m\aff_{\bR} \oplus \mfh \), for some nilpotent Lie
  algebra~\( \mfh \).
  But \( \mfh \) is unimodular and \( \aff_{\bR} \) is not, so
  \( \mfg \) is unimodular if and only if \( m = 0 \).
  \cite[Theorem~5.5]{FrSw2} now gives that \( \mfg = \mfh \) is
  two-step nilpotent.
  As \( J \) is SKT, is balanced and \( \mfg \) is two-step nilpotent,
  \cite[proof of Theorem~1.1]{FiV2} shows that \( (\mfg,J) \) is
  K\"ahler.
\end{proof}

Next, we provide an example that shows that the unimodular condition
in Theorem~\ref{th:compatibilitypuretypeI} is necessary, and hence is
also necessary in Question~\ref{quest:G}.

\begin{example}
  \label{ex:counterexampletypeI}
  Let \( \mfg = \aff_{\bR} \oplus \mfh_3 \oplus \bR \).
  Then \( \mfg \) is a non-unimodular two-step solvable Lie algebra
  with a basis \( e_1,\dots,e_6 \) for which (up to anti-symmetry) the
  only non-zero Lie brackets are
  \begin{equation*}
    [e_1,e_2]=e_2 \quad\text{and}\quad [e_3,e_4]=e_5.
  \end{equation*}
  Let \( J \) be the almost complex structure with
  \( Je_{2i-1}=e_{2i} \) for \( i=1,2,3 \).
  Thus \( (\mfg, J) \) is a direct sum of \( (\aff_{\bR},J_{1}) \) and
  \( (\mfh_{3}\oplus \bR, J_{2}) \).
  For the dual basis \( e^{1},\dots,e^{6} \) the non-zero
  differentials are \( de^{2} = -e^{12} \) and \( de^{5} = -e^{34} \).
  Then \( J_{1} \) is integrable, and for \( J_{2} \) the
  \( (1,0) \)-forms are spanned by \( \alpha_{1} = e^{3} - ie^{4} \)
  and \( \alpha_{2} = e^{5} - ie^{6} \).
  As \( d\alpha_{1} = 0 \) and
  \( d\alpha_{2} = - e^{3}\wedge e^{4} = \frac{i}{2} \alpha_{1} \wedge
  \overline{\alpha_{1}} \) is of type \( (1,1) \), we have that
  \( J_{2} \), and hence \( J \), is integrable.
  Moreover, \( \derg=\spa{e_2,e_5} \) is totally real, so
  \( (\mfg,J) \) is of type~I.

  Let \( \tilde{g} \) be the metric on \( \mfg \) for which
  \( e_1,\dots,e_6 \) is an orthonormal basis.
  Then \( \tilde{g} \) is compatible with \( J \), and the associated
  fundamental two-form is
  \begin{equation*}
    \tilde{\sigma}=e^{12}+e^{34}+e^{56}.
  \end{equation*}
  We now find that
  \begin{equation*}
    dJ^*d\tilde{\sigma} = -dJ^{*} e^{346} = d e^{345} =0,
  \end{equation*}
  so \( \tilde{g} \) is an SKT metric.

  Next, let \( \hat{g} \) be the metric on \( \mfg \) for which
  \( e_1-e_6,e_2+e_5,e_6,-e_5,e_3,e_4 \) is an orthonormal basis.
  Since this basis is unitary, \( \hat{g} \) is compatible
  with~\( J \).
  Moreover, a dual basis is given by
  \( e^1,e^2,e^1+e^6,e^2-e^5,e^3,e^4 \) and so the associated
  fundamental form \( \hat{\sigma} \) is given by
  \begin{equation*}
    \hat{\sigma}=e^{12}+(e^1+e^6)\wedge (e^2-e^5)+e^{34}
    =2e^{12}-e^{15}-e^{26}+e^{34}+e^{56}.
  \end{equation*}
  We now have
  \begin{equation*}
    \begin{split}
      d(\hat{\sigma}^2)
      &= 2 \hat{\sigma} \wedge d\hat{\sigma}
        = 2\hat{\sigma} \wedge (-e^{134} + e^{126} - e^{346}) \\
      &= 2(e^{12346} - e^{13456} + e^{12346} - 2e^{12346} + e^{13456})
        = 0,
    \end{split}
  \end{equation*}
  and hence that \( \hat{g} \) is balanced.

  However, \( \mfg\not\cong r\aff_{\bR}\oplus \bR^{6-2r} \) for any
  \( r\in \{1,\dots,3\} \), so by
  Corollary~\ref{co:Kahlerpuretype}(a), \( (\mfg,J) \) does not admit
  any compatible K\"ahler metric.
\end{example}

\subsection{Pure type~II}
\label{subsec:puretypeII}

Pure type~II means that \( \derg \) is complex.
We will first classify two-step solvable SKT Lie algebras
\( (\mfg,g,J) \) of pure type~II up to some remaining ``nilpotent''
equations and give a full classification if \( \derg \) is of
codimension two.
The latter case was the remaining open case in our classification of
the six-dimensional two-step solvable Lie algebras admitting an SKT
structure in~\cite[Theorem 7.1]{FrSw2} and so we complete this
classification here in Theorem~\ref{th:classification6dSKT}.

We begin by deriving some consequences for the form of the
endomorphisms~\( B_Z \).

\begin{lemma}
  \label{le:SKTpuretypeII}
  Let \( (\mfg,g,J) \) be a two-step solvable SKT Lie algebra of pure
  type~II.
  Then there exists a complex unitary basis \( Y_1,\dots,Y_r \) of
  \( \derg=\derg_J \) and one-forms
  \( \alpha_1,\dots,\alpha_r\in V_J^* \) such that for any
  \( Z\in V_J \), the endomorphism
  \( \ad(Z)|_{\derg}\in \End(\derg_J) \) is complex and satisfies
  \begin{equation*}
    [Z,Y_j] = \alpha_j(Z) JY_j
  \end{equation*}
  for all \( j\in \{1,\dots,r\} \).
\end{lemma}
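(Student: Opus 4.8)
The plan is to apply the general SKT-shear criterion in
Lemma~\ref{le:SKTsheardata} and extract the structure of the
endomorphisms \( B_Z = \omega(Z,\any)|_\mfa \) for \( Z\in U_J = V_J \)
in the pure type~II situation. Since \( (\mfg,g,J) \) is of pure
type~II we have \( \mfa_r = \derg_r = 0 \) and \( \mfa = \mfa_J = \derg \),
so the only parts of the shear data that survive are the complex
endomorphisms \( K_Z \coloneqq \omega(JZ,\any)|_\mfa \) coming from
\( JZ \) together with the \( B_Z \) for \( Z\in V_J \); note that
because \( \mfg = \derg + J\derg \oplus V_J \) has \( \derg = J\derg \),
the space \( \mfh = V_r\oplus V_J \) in the shear picture is just \( V_J \)
plus the real span of \( \derg \) viewed as abelian, and all brackets
go \( V_J \times \derg \to \derg \). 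So the content of the statement is:
(i) each \( \ad(Z)|_\derg \) is \emph{complex}, and (ii) the \( \ad(Z)|_\derg \)
are simultaneously unitarily diagonalisable with purely imaginary
eigenvalues, giving a common complex unitary eigenbasis \( Y_1,\dots,Y_r \)
and one-forms \( \alpha_j \in V_J^* \) with \( [Z,Y_j] = \alpha_j(Z)JY_j \).

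First I would record, from the second equation of
\eqref{eq:complexsheardata} (complex shear data) applied with arguments
in \( V_J \) and \( \derg \), the identity \( b_Z + Jb_ZJ + Jb_{JZ} - b_{JZ}J = 0 \)
for \( b_Z \coloneqq \omega(Z,\any)|_\mfa = \ad(Z)|_\derg \) — this is
exactly the computation already carried out in the proof of
Lemma~\ref{lem:split-B}, and in pure type~II there is no \( \mfa_r \)
component, so \( B_Z = b_Z \). Next I would feed \( Z, \tilde Y, \hat Y \)
with \( \tilde Y,\hat Y\in\derg \) into the SKT condition
\eqref{eq:SKTsheardata}: the cross terms \( g(J^*\omega(\cdot,\cdot),\omega(\cdot,\cdot)) \)
vanish because one slot must be \( Z \) and \( \im\omega = \derg \perp V_J \),
leaving the symmetrised term \( g(J^*\omega(\omega(\cdot,\cdot),\cdot),\cdot) \),
which after using \( \omega|_{\Lambda^2\mfa}=0 \) and bookkeeping should
reduce to \( \sigma(b_Z\tilde Y,\hat Y) + \sigma(\tilde Y, b_Z\hat Y) = 0 \),
i.e.\ \( b_Z \in \sP(\derg,\sigma) \). (This mirrors the \( \mathcal A \)-identity
step in Lemma~\ref{lem:split-B} but now justified by the SKT rather than
the integrability equation.) Then Lemma~\ref{le:symplecticendosareunitary},
applied to the pair \( (b_Z, b_{JZ}) \) — which satisfies the two hypotheses
\( [b_Z,b_{JZ}]=0 \) (from Lemma~\ref{le:conscomplexsheardata}(v), since
\( \mfa_r=0 \) forces \( B_{\tilde Z} = K_{\tilde Z}^{\text{-type}} \) relations,
giving commutativity of all these operators) and the linear relation
derived above — yields \( b_Z \in \un(\derg,g,J) \). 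In particular each
\( b_Z = \ad(Z)|_\derg \) is complex, which is claim (i).

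Finally, since all the \( b_Z \), \( Z\in V_J \), lie in \( \un(\derg,g,J) \)
and commute pairwise, they are simultaneously unitarily diagonalisable:
there is a complex unitary basis \( Y_1,\dots,Y_r \) of \( \derg = \derg_J \)
and purely imaginary eigenvalues, so on real and imaginary parts
\( b_Z Y_j = -\alpha_j(Z) JY_j \) for real linear functionals
\( \alpha_j \in V_J^* \); equivalently \( [Z,Y_j] = \omega(Z,Y_j)|_{\text{(up to sign convention)}} = \alpha_j(Z)JY_j \),
absorbing the sign of \( \omega = -[\any,\any]_\mfg \) into \( \alpha_j \).
The main obstacle I anticipate is purely bookkeeping: verifying carefully
that in pure type~II the SKT equation \eqref{eq:SKTsheardata} collapses to
exactly the symplectic condition on the \( b_Z \) (tracking which terms of
the antisymmetrisation survive given \( \im\omega \perp V_J \) and
\( \omega|_{\Lambda^2\derg}=0 \)), and confirming that the commutativity
inputs needed for Lemma~\ref{le:symplecticendosareunitary} are all
supplied by Lemma~\ref{le:conscomplexsheardata}(v) in this degenerate case.
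Once those are in place the unitary diagonalisation and the extraction of
the \( \alpha_j \) are routine.
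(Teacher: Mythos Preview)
Your outline has the right endgame (reduce to Lemma~\ref{le:symplecticendosareunitary}, then simultaneously diagonalise), and the identity $b_Z+Jb_ZJ+Jb_{JZ}-b_{JZ}J=0$ together with $[b_Z,b_{JZ}]=0$ are indeed obtained exactly as you say. The gap is in the step where you claim the SKT condition \eqref{eq:SKTsheardata} yields $b_Z\in\sP(\derg,\sigma)$ directly.

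First, \eqref{eq:SKTsheardata} is a four-form, not a three-form: you cannot ``feed in $Z,\tilde Y,\hat Y$'' and get a scalar identity. If you insert one vector from $V_J$ and three from $\derg$, every term vanishes identically (both $\omega|_{\Lambda^2\derg}=0$ and $J^*\omega|_{\Lambda^2\derg}=0$ kill the second summand, and the first summand needs two pairs, one of which must lie in $\Lambda^2\derg$), so that choice gives $0=0$. The informative evaluation is on $Z,JZ,\tilde Y,\hat Y$, and there the quadratic term $g(J^*\omega(\any,\any),\omega(\any,\any))$ does \emph{not} vanish: both $\omega$-outputs lie in $\derg$, and their $g$-pairing is generically nonzero. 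What one actually obtains is that $\cC(B_Z)+\cC(B_{JZ})$ is $g$-symmetric, where $\cC(B)=B^TBJ+BJB$; this is not the relation $\sigma(b_Z\tilde Y,\hat Y)+\sigma(\tilde Y,b_Z\hat Y)=0$.

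The missing idea, which the paper supplies, is a trace argument on the decomposition $B=B^J_++B^J_-+B^{J-}_++B^{J-}_-$ into $J$-(anti)commuting and $g$-(skew)symmetric parts: the $g$-skew part of $\cC(B)$ has trace $2(\norm{B^J_+}^2+\norm{B^{J-}_-}^2)$, so symmetry of $\cC(B_Z)+\cC(B_{JZ})$ forces $B^J_{Z,+}=B^{J-}_{Z,-}=0$ (and likewise for $JZ$). Only at that point does $B_Z$ land in $\sP(\derg,\sigma)$, and then Lemma~\ref{le:symplecticendosareunitary} applies as you planned. In short, the analogy with Lemma~\ref{lem:split-B} breaks down because there the symplectic condition is read off from the K\"ahler three-form equation, whereas here the SKT four-form equation gives something strictly weaker that needs this extra norm/trace step.
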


\begin{proof}
  Working with shear data, choose \( Z\in U_J \) and set
  \( B_1 \coloneqq B_Z \), \( B_2 \coloneqq B_{JZ} \).
  Then \( [B_1,B_2]=0 \) by the first equation
  in~\eqref{eq:complexsheardata}.
  Moreover, the second equation in~\eqref{eq:complexsheardata} yields
  \begin{equation*}
    \begin{split}
      B_2J(Y)
      &= \omega(JZ,JY)
        = \omega(Z,Y) + J(\omega(Z,JY) + \omega(JZ,Y)) \\
      &= B_1(Y) + JB_1J(Y) + JB_2(Y),
    \end{split}
  \end{equation*}
  for all \( Y\in \mfa \), so
  \begin{equation*}
    B_1+JB_1J+JB_2-B_2J=0.
  \end{equation*}
  Next, inserting \( \tilde{Y},\hat{Y}\in\mfa \) and \( Z,JZ \)
  into~\eqref{eq:SKTsheardata} yields
  \begin{equation*}
    \begin{split}
      0
      &= -g(\omega(JZ,J\tilde{Y}),
        \omega(JZ,\hat{Y}))+g(\omega(JZ,J\hat{Y}),
        \omega(JZ,\tilde{Y})) -g(\omega(Z,J\tilde{Y}),
        \omega(Z,\hat{Y})) \\
      &\qquad
        +g(\omega(Z,J\hat{Y}),
        \omega(Z,\tilde{Y})) +
        g(\omega(J\omega(Z,\tilde{Y}),Z),\hat{Y}) -
        g(\omega(J\omega(Z,\hat{Y}),Z),\tilde{Y}) \\
      &\qquad
        + g(\omega(J\omega(JZ,\tilde{Y}),JZ),\hat{Y}) -
        g(\omega(J\omega(JZ,\hat{Y}),JZ),\tilde{Y})\\
      &= -g(B_2 J(\tilde{Y}),B_2(\hat{Y}))
        +g(B_2 J(\hat{Y}),B_2(\tilde{Y}))
        -g(B_1 J(\tilde{Y}),B_1(\hat{Y})) \\
      &\qquad
        +g(B_1 J(\hat{Y}),B_1(\tilde{Y}))
        -g(B_1 J B_1(\tilde{Y}),\hat{Y})
        +g(B_1 J B_1(\hat{Y}),\tilde{Y}) \\
      &\qquad
        -g(B_2 J B_2(\tilde{Y}),\hat{Y})
        +g(B_2 J B_2(\hat{Y}),\tilde{Y})\\
      &= -g\paren[\Big]{\paren[\big]{ \cC(B_{1})+\cC(B_{2})}\tilde{Y},
        \hat{Y}}
        +g\paren[\Big]{\tilde{Y}, \paren[\big]{\cC(B_{1})+\cC(B_{2})}
        \hat{Y}},
    \end{split}
  \end{equation*}
  where \( \cC(B) \coloneqq B^T B J + B JB \).
  We conclude that \( \cC(B_{1})+\cC(B_{2}) \) is required to be
  \( g \)-symmetric.

  For \( i=1,2 \), decompose \( B = B^J+B^{J-} \) into its
  \( J \)-invariant part \( B^J \) and into its \( J \)-anti-invariant
  part \( B^{J-} \) and then for \( A\in \{J,J-\} \) decompose
  \( B^A \coloneqq B_{+}^A +B_{-}^A \) into its \( g \)-symmetric part
  \( B_{+}^{A} \) and its \( g \)-skew-symmetric part \( B_{-}^A \).
  Then the \( g \)-skew-symmetric part of \( \cC(B) \) is
  \begin{equation}
    \label{eq:skew-part}
    \begin{split}
      \frac{1}{2}(\cC(B) - \cC(B)^{T})
      &= \frac{1}{2}\paren{ B^{T}BJ + BJB + JB^{T}B + B^{T}JB^{T}} \\
      &= \frac{1}{2}\paren{ B^{T}(BJ+JB^{T}) + (BJ+JB^{T})B } \\
      &= B^{T}J(B_{+}^{J}- B_{-}^{J-}) + J(B_{+}^{J}- B_{-}^{J-})B \\
      &= J \paren[\big]{2(B_{+}^{J})^{2}- 2(B_{-}^{J-})^{2}
        + [B_{+}^{J} - B_{-}^{J-},B_{-}^{J} + B_{+}^{J-}]}.
    \end{split}
  \end{equation}
  Note that this has trace
  \begin{equation*}
    2 \tr((B_{+}^{J})^{2}) - 2 \tr((B_{-}^{J-})^{2})=
    2(\norm{B_{+}^{J}}^{2} + \norm{B_{-}^{J-}}^{2}),
  \end{equation*}
  since for \( B^{T} = \varepsilon B \), \( \varepsilon = \pm1 \), and
  any orthonormal basis \( E_1,\dots,E_{2s} \) of \( \mfa \), we have
  \begin{equation*}
    \begin{split}
      \tr(B^{2}) = \sum_{j=1}^{2s}
      g(B^{2}E_{j},E_{j}) = \varepsilon
      \sum_{j=1}^{2s} g(BE_{j},BE_{j}) =
      \varepsilon \norm{B}^{2}.
    \end{split}
  \end{equation*}
  Thus for \( \cC(B_{1})+\cC(B_{2}) \) to be \( g \)-symmetric we must
  have \( B_{i,+}^{J} = 0 = B_{i,-}^{J-} \) for \( i=1,2 \).
  But then \( B_i = B_{i,-}^{J} + B_{i,+}^{J-} \) which lies in
  \( \sP(\mfa,\sigma) \).
  Consequently, we may apply Lemma~\ref{le:symplecticendosareunitary}
  to deduce that we actually have \( B_i\in \un(\mfa,g,J) \), so
  \( B_{i,+}^{J-} = 0 \).
  Then \eqref{eq:skew-part} gives that \( \cC(B_{1}) + \cC(B_{2}) \)
  is \( g \)-symmetric.

  Thus \( B_Z\in \un(\mfa,g,J) \) for \( Z\in U_J \).
  But by Lemma~\ref{le:conscomplexsheardata} gives that all \( B_Z \)
  commute pairwise, so these complex endomorphisms of~\( (\mfa,J) \)
  are simultaneously diagonalisable with only imaginary eigenvalues.
  This is the assertion of Lemma~\ref{le:SKTpuretypeII}
\end{proof}

In the case that \( \derg \) is of codimension two, there are no
further conditions to be satisfied, cf.~\cite[\S7.2]{FrSw2}.  Hence

\begin{corollary}
  \label{co:codimensiontwopuretypeII}
  Let \( (\mfg,g,J) \) be an almost Hermitian Lie algebra.
  Then \( (\mfg, g, J) \) is a two-step solvable SKT Lie algebra of
  pure type~II for which \( \derg \) is of codimension two if and only
  if there is a complex unitary basis \( Y_1,\dots,Y_s \) of
  \( (\derg,g,J) \), elements \( Z_{1},Z_{2} \in \mfg \) spanning a
  two-dimensional complement to \( \derg \), and
  \( (a_{1,1},a_{1,2}),\dots,(a_{1,s},a_{2,s})\in \bR^{2}\setminus
  \Set{(0,0)} \) such that (up to anti-symmetry and complex-linear
  extension) the only non-zero Lie brackets are given by
  \begin{equation*}
    [Z_{k},Y_j] = a_{k,j} JY_j,
  \end{equation*}
  for \( k = 1,2 \) and \( j=1,\dots,s \).
\end{corollary}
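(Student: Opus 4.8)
The plan is to prove Corollary~\ref{co:codimensiontwopuretypeII} as a direct combination of Lemma~\ref{le:SKTpuretypeII} with the observation that, when $\derg$ has codimension two, the remaining structural constraint~\eqref{eq:SKTsheardata} is automatically satisfied, so that no further ``nilpotent'' equations survive.

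First I would establish the forward direction. Suppose $(\mfg,g,J)$ is a two-step solvable SKT Lie algebra of pure type~II with $\derg$ of codimension two. Realise it via complex shear data $(\mfa,\omega)$ on a flat K\"ahler $(\bR^{2n},g,J)$ with $\mfa=\derg=\derg_J$, so $\dim V_J = 2$ and $\mfa_r=0$, $U_r=0$, $U_J=V_J$. Lemma~\ref{le:SKTpuretypeII} supplies a complex unitary basis $Y_1,\dots,Y_s$ of $\derg$ and one-forms $\alpha_1,\dots,\alpha_s\in V_J^*$ with $[Z,Y_j]=\alpha_j(Z)JY_j$ for all $Z\in V_J$. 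Choosing a real basis $Z_1,Z_2$ of $V_J$ and setting $a_{k,j}\coloneqq\alpha_j(Z_k)$ then gives $[Z_k,Y_j]=a_{k,j}JY_j$. Since each $\ad(Z)|_{\derg}$ is complex, these are, up to anti-symmetry and complex-linear extension, the only non-zero brackets; and because $\derg=[\mfg,\mfg]=\spa{Y_1,JY_1,\dots,Y_s,JY_s}$, for each $j$ at least one of $\alpha_1(Z_1),\dots$, i.e.\ at least one of $(a_{1,j},a_{2,j})$, must be non-zero, giving $(a_{1,j},a_{2,j})\in\bR^2\setminus\{(0,0)\}$.

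For the converse I would start from the bracket data in the statement, define $J$ on $\spa{Y_j}\oplus JY_j$ and on $\spa{Z_1,Z_2}$ via a chosen unitary frame, and let $g$ be the metric making $Y_1,JY_1,\dots,Y_s,JY_s,Z_1,Z_2$ orthonormal. One checks the Jacobi identity directly: brackets of the $Y$'s and $JY$'s vanish, $[Z_1,Z_2]=0$, and $[[Z_k,Y_j],\any]$ involves only $JY_j\in\derg$ which is central, so all Jacobi expressions collapse; hence $\mfg$ is a (two-step solvable) Lie algebra with $\derg=\spa{Y_j,JY_j}$ of codimension two, and $J\derg=\derg$ so it is of pure type~II. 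Integrability of $J$ is equivalent to the second equation in~\eqref{eq:complexsheardata}, which holds since each $B_{Z_k}=\omega(Z_k,\any)|_{\derg}$ is the complex endomorphism $Y_j\mapsto -a_{k,j}JY_j$ and $\omega$ vanishes on the rest; the first equation in~\eqref{eq:complexsheardata} holds because $\im\omega=\derg$ is central. It remains to verify the SKT condition~\eqref{eq:SKTsheardata}. Here I would argue, as in the proof of Lemma~\ref{le:SKTpuretypeII}, that the only arguments contributing are two elements of $\mfa$ together with $Z_1,Z_2$ (using $\omega|_{\Lambda^2\mfa}=0$, $\mfa_r=0$, and $\im\omega\perp V_J$), and the resulting expression is exactly the requirement that $\cC(B_{Z_1})+\cC(B_{Z_2})$ be $g$-symmetric with $\cC(B)\coloneqq B^TBJ+BJB$; since each $B_{Z_k}\in\un(\derg,g,J)$ one has $B_{Z_k}^T=-B_{Z_k}$ and $[B_{Z_k},J]=0$, so $\cC(B_{Z_k})=-B_{Z_k}^2J+B_{Z_k}^2J\cdot$ --- more precisely $\cC(B_{Z_k})=B_{Z_k}^TB_{Z_k}J+B_{Z_k}JB_{Z_k}=-B_{Z_k}^2J+JB_{Z_k}^2=0$, being the difference of two equal terms, so the sum is trivially symmetric and~\eqref{eq:SKTsheardata} holds.

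The main obstacle I anticipate is the converse's SKT verification: one must be careful that no mixed terms with one $Z$ and three (or one) elements of $\derg$, or with all four arguments in $\derg$, contribute to the anti-symmetrisation in~\eqref{eq:SKTsheardata}. This is handled by the same bookkeeping used in Lemma~\ref{le:SKTpuretypeII} --- $\omega$ vanishes on $\Lambda^2\mfa$ and $\im\omega\perp V_J$ force each non-vanishing term of $g(J^*\omega(\any,\any),\omega(\any,\any))$ and $g(J^*\omega(\omega(\any,\any),\any),\any)$ to use both $Z_1$ and $Z_2$ exactly once --- after which the computation reduces verbatim to the chain of equalities already displayed in that proof, now with $B_i=B_{Z_i}$ unitary, making the $g$-symmetry automatic. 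Everything else is the routine check of the Jacobi identity and of~\eqref{eq:complexsheardata}, which I would state briefly and leave to the reader, noting the consistency with \cite[\S7.2]{FrSw2}.
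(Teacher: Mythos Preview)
Your forward direction has a genuine gap. Having chosen $Z_1,Z_2$ to be a basis of the orthogonal complement $V_J$, you assert that the brackets $[Z_k,Y_j]=a_{k,j}JY_j$ are, up to anti-symmetry and complex extension, the \emph{only} non-zero ones. But Lemma~\ref{le:SKTpuretypeII} controls only $\ad(Z)|_{\derg}$; it says nothing about $\omega|_{\Lambda^2 U_J}$, and in general $[Z_1,Z_2]\in\derg$ does \emph{not} vanish for the orthogonal $V_J$. (This is precisely the role of the constants $z_j$ in the later Theorem~\ref{th:SKTpuretypeII}, specialised to $\dim V_J=2$.) Your subsequent argument that each $(a_{1,j},a_{2,j})\neq(0,0)$ also presupposes $[Z_1,Z_2]=0$, so it is circular as written.

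The paper closes this gap by a translation. One first checks that the images of $\ad(Z)|_{\derg}$ and $\ad(JZ)|_{\derg}$ together span $\derg$: since $\derg=[V_J,\derg]+[V_J,V_J]$ with $[V_J,V_J]=\bR\,[Z,JZ]$ at most one-dimensional, any complex line $\spa{Y_{j_0},JY_{j_0}}$ missed by $[V_J,\derg]$ could not be recovered, so every $\alpha_j\neq 0$ and $[V_J,\derg]=\derg$. Now $[Z,JZ]\in\derg$ can be written as $[JZ,\tilde Y]-[Z,\hat Y]$ for suitable $\tilde Y,\hat Y\in\derg$, and then $Z_1\coloneqq Z+\tilde Y$, $Z_2\coloneqq JZ+\hat Y$ satisfy $[Z_1,Z_2]=0$ while $[Z_k,Y_j]=a_{k,j}JY_j$ is unchanged. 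Note that the resulting $Z_1,Z_2$ are \emph{not} in $V_J$: the statement only asks that they span some complement to $\derg$, and this freedom is essential. Your converse sketch and the SKT verification via $\cC(B_{Z_k})$ are fine once this point is in place.
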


\begin{proof}
  Choose \( Z \in V_{J}\setminus\Set{0} \).
  Then \( Z,JZ \) is a basis for \( V_{J} \) and hence the union of
  the images of \( \ad(Z)|_{\derg} \) and \( \ad(JZ)|_{\derg} \)
  spans~\( \derg \).
  As \( [Z,JZ] \in \derg \) and \( \derg \)~is Abelian, we can find
  \( \tilde{Y},\hat{Y}\in \derg \) such that
  \( Z_{1} = Z + \tilde{Y} \), \( Z_{2} = JZ + \hat{Y} \) has
  \( [Z_{1},Z_{2}]=0 \).
  The result now follows from Lemma~\ref{le:SKTpuretypeII} with
  \( a_{1,j} = \alpha_{j}(Z) \) and \( a_{2,j} = \alpha_{j}(JZ) \).
\end{proof}

If \( \mfg \) is six-dimensional, one deduces that \( \mfg \) admits a
dual basis \( e^{1},\dots,e^{6} \) whose differentials are given
either by
\begin{equation}
  \label{eq:6-1}
  (25,-15,46,-36,0,0),
\end{equation}
when \( (a_{k,j}) \) is of rank two, or by
\begin{equation}
  \label{eq:6-2}
  (25,-15,\lambda.45,-\lambda.35,0,0) \qquad\text{for some
  \( \lambda\in (0,1] \),}
\end{equation}
when \( (a_{k,j}) \) has rank one.
In the first case, the Lie algebra is isomorphic to \( 2\mfr'_{3,0} \)
and in the second case to \( \mfg_{5,17}^{0,0,\lambda}\oplus \bR \).
This covers the remaining equations in \cite[Theorem~7.1]{FrSw2} and
we have

\begin{theorem}
  \label{th:classification6dSKT}
  A six-dimensional two-step solvable Lie algebra \( \mfg \) admits an
  SKT structure if and only if it is one of the algebras explicitly
  listed in \cite[Corollary 4.8, Theorems 4.10, 7.5 and~7.1]{FrSw2} or
  it is one of the algebras in \eqref{eq:6-1} or \eqref{eq:6-2}.  \qed
\end{theorem}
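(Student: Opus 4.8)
The plan is to combine the earlier classification results with the two new cases worked out above. The statement is essentially a bookkeeping result: \cite[Theorem~7.1]{FrSw2} classified the six-dimensional two-step solvable SKT Lie algebras \emph{except} for the pure type~II case with $\derg$ of codimension two, and the present section has just disposed of exactly that case via Corollary~\ref{co:codimensiontwopuretypeII} and the subsequent explicit reduction to \eqref{eq:6-1} and \eqref{eq:6-2}. So the proof is: (1)~recall that every six-dimensional two-step solvable Lie algebra with a complex structure falls into one of the three pure types or is a genuine mixed type; (2)~observe that \cite[Corollary~4.8, Theorems~4.10, 7.5 and~7.1]{FrSw2} together cover all of these cases \emph{except} pure type~II with $\dim\derg = 4$; (3)~invoke Corollary~\ref{co:codimensiontwopuretypeII} together with the discussion producing \eqref{eq:6-1} and \eqref{eq:6-2} to handle that last case; (4)~conclude.

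In more detail, I would first note that a six-dimensional two-step solvable SKT Lie algebra $(\mfg,g,J)$ of pure type~II necessarily has $\derg = \derg_J$ of even complex-even real dimension $2$ or $4$ (it cannot be $6$, as then $\mfg = \derg$ would be abelian, contradicting that $\mfg$ is genuinely two-step, or $\derg$ of dimension $2$ is already covered). The subcase $\dim\derg=2$, i.e.\ $\derg$ of codimension four, is part of the explicit lists already cited from~\cite{FrSw2}. This leaves only $\dim\derg = 4$, codimension two. For that subcase, Corollary~\ref{co:codimensiontwopuretypeII} gives a complex unitary basis $Y_1,Y_2$ of $\derg$, a complement spanned by $Z_1,Z_2$ with $[Z_1,Z_2]=0$, and pairs $(a_{1,j},a_{2,j}) \neq (0,0)$ with $[Z_k,Y_j] = a_{k,j}JY_j$. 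The $2\times 2$ matrix $(a_{k,j})$ has rank one or two; a change of basis in $\spa{Z_1,Z_2}$ (and rescaling/relabelling the $Y_j$) brings it to the normal forms giving differentials \eqref{eq:6-1} or \eqref{eq:6-2}, and one identifies the resulting Lie algebras as $2\mfr'_{3,0}$ and $\mfg_{5,17}^{0,0,\lambda}\oplus\bR$ respectively, as indicated in the excerpt.

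Conversely, each algebra in the cited lists of~\cite{FrSw2} admits an SKT structure by construction there, and the algebras \eqref{eq:6-1}, \eqref{eq:6-2} admit one by Corollary~\ref{co:codimensiontwopuretypeII} (the corollary is an ``if and only if'', so the stated bracket relations \emph{are} an SKT Lie algebra). This gives both directions and completes the proof.

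The only real subtlety—and the step I would be most careful about—is verifying that the case division is genuinely exhaustive: that nothing in dimension six two-step solvable with an SKT structure slips between the cases of \cite[Corollary~4.8, Theorems~4.10, 7.5, 7.1]{FrSw2} and the pure type~II codimension-two case. This requires checking against the structure theory of~\cite{FrSw2} that mixed types and the remaining pure types in dimension six are all already enumerated there, and that the rank-one/rank-two dichotomy for $(a_{k,j})$ together with the normal-form reduction really does produce \emph{precisely} the two algebras claimed and no spurious isomorphism classes. Everything else is routine: the normal-form computation and the identification of the two Lie algebras against Table~\ref{table_LAs} are short linear-algebra checks.
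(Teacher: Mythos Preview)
Your proposal is correct and matches the paper's approach exactly: the theorem is stated with a \qed because its proof is precisely the preceding discussion, namely that \cite[Theorem~7.1]{FrSw2} left open only the pure type~II codimension-two case in dimension six, and Corollary~\ref{co:codimensiontwopuretypeII} together with the rank-one/rank-two reduction of $(a_{k,j})$ yields \eqref{eq:6-1} and \eqref{eq:6-2}. Your extra care about exhaustiveness of the case division is reasonable but not needed beyond what the cited results of \cite{FrSw2} already guarantee.
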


Returning to Corollary~\ref{co:codimensiontwopuretypeII}, we see from
the fact that \( \ad(Z+Y)|_{\derg}=\ad(Z)|_{\derg} \) for all
\( Z\in V_J \) and \( Y\in \derg \), that the SKT condition only
depends on \( g|_{\derg} \).

\begin{corollary}
  \label{co:changeSKTmetriccodim2}
  Let \( (\mfg,g,J) \) be a two-step solvable SKT Lie algebra of pure
  type~II such that \( \derg \) is of codimension two in~\( \mfg \).
  If \( \tilde{g} \) is another Hermitian metric on \( (\mfg,J) \)
  with \( \tilde{g}|_{\derg}=g|_{\derg} \), then \( \tilde{g} \) is
  also SKT. \qed
\end{corollary}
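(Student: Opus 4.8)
The plan is to reduce the statement to the explicit description of such SKT structures provided by Corollary~\ref{co:codimensiontwopuretypeII}, and then to observe that every condition appearing there is either purely Lie-theoretic or involves the metric only through its restriction to~$\derg$.

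First I would apply Corollary~\ref{co:codimensiontwopuretypeII} to $(\mfg,g,J)$. Since $(\mfg,g,J)$ is a two-step solvable SKT Lie algebra of pure type~II with $\derg$ of codimension two, this produces a complex unitary basis $Y_1,\dots,Y_s$ of $(\derg,g,J)$, elements $Z_1,Z_2\in\mfg$ spanning a two-dimensional complement to~$\derg$, and pairs $(a_{1,j},a_{2,j})\in\bR^2\setminus\{(0,0)\}$ for $j=1,\dots,s$, such that the only non-zero brackets of $\mfg$ (up to anti-symmetry and complex-linear extension on~$\derg$) are $[Z_k,Y_j]=a_{k,j}JY_j$.

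Next I would feed exactly this data into the converse direction of the same corollary, now applied to $(\mfg,\tilde g,J)$. Being of pure type~II means $\derg$ is complex, which depends only on $(\mfg,J)$, and $\derg$ is still of codimension two; the Lie brackets of $\mfg$ are metric-independent, so the $Z_k$ still span a complement to~$\derg$ and the statement that the only non-zero brackets are $[Z_k,Y_j]=a_{k,j}JY_j$, with the same non-zero pairs $(a_{1,j},a_{2,j})$, continues to hold. The only place the metric enters Corollary~\ref{co:codimensiontwopuretypeII} is the requirement that $Y_1,\dots,Y_s$ be a complex \emph{unitary} basis of the derived algebra; since $\tilde g|_\derg=g|_\derg$, the Hermitian structure $(\derg,\tilde g,J)$ coincides with $(\derg,g,J)$, so this holds automatically. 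Hence Corollary~\ref{co:codimensiontwopuretypeII} applies in the reverse direction and $(\mfg,\tilde g,J)$ is SKT.

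There is essentially no obstacle here, since the substance is already contained in Corollary~\ref{co:codimensiontwopuretypeII}; the only point requiring a moment's care is confirming that the elements $Z_1,Z_2$ and the structure constants $a_{k,j}$ realising the SKT structure for $g$ may be re-used verbatim for $\tilde g$, which is immediate because the conditions they must satisfy make no reference to the metric. If one preferred a self-contained argument, one could instead evaluate the SKT equation~\eqref{eq:SKTsheardata} directly: as in the proof of Lemma~\ref{le:SKTpuretypeII}, the only arguments that can contribute are two elements of $\derg$ together with a pair $Z,JZ$ for $Z\in V_J$, and on such arguments the expression only involves $g$ through $g|_\derg$ and $\ad(Z)|_\derg$, the latter being metric-independent; but invoking the classification is shorter.
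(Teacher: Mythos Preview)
Your proof is correct and follows essentially the same route as the paper. The paper's justification is the single sentence preceding the corollary, observing that since $\ad(Z+Y)|_{\derg}=\ad(Z)|_{\derg}$ for $Z\in V_J$, $Y\in\derg$, the characterisation in Corollary~\ref{co:codimensiontwopuretypeII} depends on~$g$ only through~$g|_{\derg}$; your argument makes the same point by reusing the data $(Y_j,Z_k,a_{k,j})$ verbatim and noting that the unitary condition on the~$Y_j$ is the sole metric-dependent hypothesis.
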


Thus, we obtain the desired result in the codimension two case:

\begin{corollary}
  \label{co:compatibilitypuretypeIIcodim2}
  Suppose \( (\mfg,J) \) is a two-step solvable Lie algebra with a
  complex structure of pure type~II such that \( \derg \) is of
  codimension two.
  If \( J \) is SKT and is balanced, the J is also K\"ahler.
\end{corollary}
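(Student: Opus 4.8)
The plan is to take a compatible SKT metric \( \tilde{g} \) and a compatible balanced metric \( \hat{g} \) on \( (\mfg,J) \) (both of which exist by hypothesis) and splice them into a single compatible metric \( g_0 \) that is simultaneously SKT and balanced; Proposition~\ref{pro:balanced+SKT=Kahler} then finishes the argument. This splicing is possible because, for a two-step solvable Lie algebra of pure type~II with \( \derg \) of codimension two, the two conditions constrain disjoint pieces of the metric data. Indeed, pure type~II means \( \derg_J = \derg \), so this subspace is \( J \)-invariant and independent of the metric; by Corollary~\ref{co:changeSKTmetriccodim2} the SKT property of such an algebra depends only on the restriction \( g|_{\derg} \); and by Corollary~\ref{co:changebalancedmetric} the balanced property depends only on the orthogonal splitting \( \mfg = \derg_J \oplus (\derg_J)^{\perp g} \) together with the restriction \( g|_{(\derg_J)^{\perp g}} \).

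Concretely, I would set \( W \coloneqq (\derg)^{\perp \hat{g}} \), which is a \( J \)-invariant complement of \( \derg \) in \( \mfg \) since \( \hat{g} \) is Hermitian and \( \derg \) is \( J \)-invariant, and then define \( g_0 \) by declaring \( \derg \perp_{g_0} W \), \( g_0|_{\derg} \coloneqq \tilde{g}|_{\derg} \) and \( g_0|_W \coloneqq \hat{g}|_W \). This \( g_0 \) is again compatible with \( J \): it restricts to Hermitian metrics on the \( J \)-invariant subspaces \( \derg \) and \( W \), each of which \( J \) maps to itself, and the cross terms vanish by construction. Since \( g_0|_{\derg} = \tilde{g}|_{\derg} \), Corollary~\ref{co:changeSKTmetriccodim2} shows \( (\mfg,g_0,J) \) is SKT; and since \( \derg_J = \derg \) with \( (\derg_J)^{\perp g_0} = W = (\derg_J)^{\perp \hat{g}} \) and \( g_0|_W = \hat{g}|_W \), Corollary~\ref{co:changebalancedmetric} shows \( (\mfg,g_0,J) \) is balanced.

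Thus \( (\mfg,g_0,J) \), viewed as the left-invariant Hermitian structure on the associated simply-connected Lie group, is a Hermitian manifold that is both SKT and balanced, so Proposition~\ref{pro:balanced+SKT=Kahler} forces it to be K\"ahler; in particular \( J \) is K\"ahler. Note that no unimodularity or compactness hypothesis intervenes, since Proposition~\ref{pro:balanced+SKT=Kahler} holds for arbitrary Hermitian manifolds --- in fact such \( \mfg \) are automatically unimodular, as is visible from the explicit brackets in Corollary~\ref{co:codimensiontwopuretypeII}, but this plays no role. I do not anticipate a genuine obstacle: Corollaries~\ref{co:changeSKTmetriccodim2} and~\ref{co:changebalancedmetric} already carry all the weight, and the only point needing a little care is the orthogonal-complement bookkeeping, namely checking that the \( \derg_J \)-orthogonal complement of the spliced metric \( g_0 \) is exactly \( W \) --- which is immediate from the construction.
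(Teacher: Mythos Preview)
Your proof is correct and essentially identical to the paper's: you take the balanced metric's orthogonal complement \( W = (\derg)^{\perp \hat g} \) of \( \derg \), splice \( \tilde g|_{\derg} \) with \( \hat g|_W \), and invoke Corollaries~\ref{co:changeSKTmetriccodim2} and~\ref{co:changebalancedmetric} followed by Proposition~\ref{pro:balanced+SKT=Kahler}, exactly as the paper does. Your extra remarks (checking \( g_0 \) is Hermitian, noting unimodularity is automatic but unused) are accurate and do not alter the argument.
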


\begin{proof}
  Let \( \tilde{g} \) be the SKT metric and \( \hat{g} \) the balanced
  metric.
  Write \( \hat{V}_{J} \) for the orthogonal complement of \( \derg \)
  with respect to \( \hat{g} \).
  Define a new metric \( g \) on \( \mfg \) by declaring \( \derg \)
  to be \( g \)-orthogonal~\( \hat{V}_{J} \) and setting
  \begin{equation*}
    g|_{\derg} = \tilde{g}|_{\derg},\quad
    g|_{\hat{V}_{J}} = \hat{g}|_{\hat{V}_{J}}.
  \end{equation*}
  Then \( g \) is compatible with \( (\mfg,J) \) and both balanced by
  Corollary~\ref{co:changebalancedmetric} and SKT by
  Corollary~\ref{co:changeSKTmetriccodim2}.
  Thus, \( g \) is K\"ahler by
  Proposition~\ref{pro:balanced+SKT=Kahler}.
\end{proof}

Next, we consider the general case of a two-step solvable SKT Lie
algebras and give a full classification of these, up to some
``nilpotent terms''.
This will be sufficient to obtain the generalisation of
Corollary~\ref{co:compatibilitypuretypeIIcodim2}.

\begin{theorem}
  \label{th:SKTpuretypeII}
  Let \( (\mfg,g,J) \) be a two-step solvable almost Hermitian Lie
  algebra of pure type~II.
  Then \( (\mfg,g,J) \) is SKT if and only if there exists a complex
  unitary basis of \( Y_1,\dots,Y_s \) of~\( (\derg,g,J) \) and, for
  some \( m \in \Set{0,\dots,s} \), one-forms
  \( \alpha_1,\dots,\alpha_m\in V_J^*\setminus\Set{0} \), numbers
  \( z_1,\dots,z_m\in \bC \), complex \( (1,1) \)-forms
  \( \varphi_{m+1},\dots,\varphi_s \in \Lambda^{1,1} V_J^* \) and
  complex \( (2,0) \)-forms
  \( \psi_{m+1},\dots,\psi_s\in \Lambda^{2,0} V_J^* \) such that
  \begin{equation*}
    \sum_{k=m+1}^s \varphi_{k} \wedge \overline{\varphi_{k}}
    - \psi_k \wedge \overline{\psi_k} = 0,
  \end{equation*}
  the two forms \( \varphi_{k}+\psi_{k} \), \( k = m+1,\dots,s \), are
  linearly independent, and the only non-zero Lie brackets (up to
  anti-symmetry and complex-linear extension) are given by
  \begin{gather}
    \label{eq:Z-Yj-m}
    [Z,Y_j] = \alpha_j(Z)\, JY_j, \qquad\text{\( j = 1,\dots,m\),}\\
    [Z,W] =\sum_{j=1}^{m} z_j\, (\alpha_j\wedge J^*\alpha_j)(Z,W)\,
    Y_j + \sum_{k=m+1}^{s} \paren{\varphi_{k} + \psi_k} (Z,W)\, Y_k
  \end{gather}
  for all \( Z,W\in V_J \).
\end{theorem}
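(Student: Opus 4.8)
The plan is to work throughout with the shear presentation of Lemma~\ref{le:SKTsheardata}: regard \( (\mfg,g_\mfg,J_\mfg) \) as the shear of a flat Kähler structure \( (\bR^{2n},g,J) \) by complex shear data \( (\mfa,\omega) \), with \( \mfa = \derg \) and \( \omega = -[\any,\any]_\mfg \). Pure type~II means \( \mfa = \mfa_J = \derg \), so \( \mfa_r = U_r = 0 \) and \( U_J = V_J \), and the only endomorphisms in play are the \( B_Z = \omega(Z,\any)|_\mfa \), \( Z\in V_J \), which commute pairwise by Lemma~\ref{le:conscomplexsheardata}. For the ``only if'' direction I would begin by invoking Lemma~\ref{le:SKTpuretypeII} to obtain a complex unitary basis \( Y_1,\dots,Y_s \) of \( \derg \) and one-forms \( \alpha_j\in V_J^* \) with \( [Z,Y_j] = \alpha_j(Z)\,JY_j \); relabelling, I may assume \( \alpha_1,\dots,\alpha_m\neq0 \) and \( \alpha_{m+1} = \dots = \alpha_s = 0 \), which fixes the integer \( m \). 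Since \( [V_J,V_J]\subseteq\derg \), identifying each \( \spa{Y_k,JY_k} \) with \( \bC \) lets me write \( \omega(Z,W) = \sum_{k=1}^{s}\Theta_k(Z,W)\,Y_k \) for \( \bC \)-valued two-forms \( \Theta_1,\dots,\Theta_s \) on \( V_J \), and the entire bracket is then encoded by the \( \alpha_j \) together with the \( \Theta_k \).

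The next step is to read off the shape of the \( \Theta_k \) from the complex-shear-data equations~\eqref{eq:complexsheardata}. Decomposing each \( \Theta_k \) into its \( (2,0) \)-, \( (1,1) \)- and \( (0,2) \)-parts with respect to \( J|_{V_J} \), a short computation with the type of a two-form shows that the identity \( J^*\omega = \omega - J\circ J.\omega \), restricted to \( V_J\times V_J \), forces the \( (0,2) \)-part of each \( \Theta_k \) to vanish; write \( \Theta_k = \varphi_k + \psi_k \) with \( \varphi_k\in\Lambda^{1,1}V_J^* \), \( \psi_k\in\Lambda^{2,0}V_J^* \). Evaluating the Jacobi-type equation \( \cA(\omega(\omega(\any,\any),\any)) = 0 \) on three vectors of \( V_J \) (the evaluations with at least one argument in \( \derg \) being automatic, since \( \omega|_{\Lambda^2\mfa} = 0 \)) gives \( \alpha_k\wedge\Theta_k = 0 \) for every \( k \), so for \( k\le m \) the two-form \( \Theta_k \) is divisible by the non-zero real one-form \( \alpha_k \). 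A short piece of linear algebra shows that a \( \bC \)-valued two-form with no \( (0,2) \)-part that is divisible by \( \alpha_k \) must equal \( z_k\,\alpha_k\wedge J^*\alpha_k \) plus a residual \( (2,0) \)-piece \( \psi_k \); so it remains to eliminate \( \psi_k \) for \( k\le m \) and to identify the constraint on the \( \Theta_k \) for \( k>m \).

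Both remaining points come from the SKT equation~\eqref{eq:SKTsheardata}. One first checks that, on a basis adapted to \( \derg\oplus V_J \), this \( 4 \)-form is non-trivial only when the four arguments are all in \( V_J \), three in \( V_J \) and one in \( \derg \), or two in \( V_J \) and two in \( \derg \). The two-in-\( \derg \) case reduces, after polarising and using that the \( B_Z \) are unitary and commute, to the content already obtained in the proof of Lemma~\ref{le:SKTpuretypeII}, so it yields nothing new. In the three-in-\( V_J \) case, taking the fourth argument to be \( Y_a \) or \( JY_a \) and writing out both the quadratic and the first-order terms of~\eqref{eq:SKTsheardata}, one finds (using \( \alpha_a\wedge\Theta_a = 0 \)) that the identity collapses to a fixed non-zero scalar multiple of \( J^*\alpha_a\wedge\Theta_a \) together with an \( \alpha_a\wedge\psi_a \)-term, which forces \( \psi_a = 0 \) for \( a\le m \) (for \( a>m \) the evaluation is vacuous as \( \alpha_a = 0 \)); hence \( \Theta_k = z_k\,\alpha_k\wedge J^*\alpha_k \) for all \( k\le m \). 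Finally, in the all-in-\( V_J \) case the quadratic term drops out entirely — in each of its summands the outermost slot pairs a vector of \( \derg \) against a vector of \( V_J \) — and the first-order term, after taking real parts and cancelling the mutually conjugate cross-terms \( \varphi_k\wedge\overline{\psi_k} \) and \( \psi_k\wedge\overline{\varphi_k} \), reduces to \( \sum_{k}(\varphi_k\wedge\overline{\varphi_k} - \psi_k\wedge\overline{\psi_k}) = 0 \); the summands with \( k\le m \) vanish because there \( \psi_k = 0 \) and \( \varphi_k = z_k\,\alpha_k\wedge J^*\alpha_k \) is decomposable, leaving exactly the stated relation \( \sum_{k=m+1}^{s}\varphi_k\wedge\overline{\varphi_k} - \psi_k\wedge\overline{\psi_k} = 0 \). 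The requirement \( \derg = \im\omega \) then forces each \( Y_k \) with \( k>m \) into the span of the \( \omega(Z,W) \), which is precisely linear independence of the two-forms \( \varphi_k + \psi_k \), \( k>m \). This proves ``only if''; for ``if'' one runs these computations in reverse — the displayed brackets define complex shear data (the type identity by the \( (2,0)\oplus(1,1) \)-structure, the Jacobi identity by divisibility of \( z_k\alpha_k\wedge J^*\alpha_k \) by \( \alpha_k \) together with \( \alpha_k = 0 \) for \( k>m \)), \( \derg = \spa{Y_1,\dots,Y_s} \) by the independence hypothesis, and the three SKT cases hold because \( \Theta_k = z_k\,\alpha_k\wedge J^*\alpha_k \) for \( k\le m \) and because of the quadratic constraint — so Lemma~\ref{le:SKTsheardata} gives that \( (\mfg,g_\mfg,J_\mfg) \) is SKT.

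The step I expect to be the main obstacle is the explicit bookkeeping of~\eqref{eq:SKTsheardata} in the mixed three-in-\( V_J \), one-in-\( \derg \) case: one must enumerate the several first-order and quadratic contributions together with the combinatorial signs coming from the antisymmetrisation \( \cA \), and verify that the resulting scalar coefficient of \( J^*\alpha_a\wedge\Theta_a \) is genuinely non-zero, so that \( \psi_a = 0 \) really follows; dually, in the all-in-\( V_J \) case one must confirm that under \( \re \) the conjugate cross-terms cancel and the \( k\le m \) summands vanish identically, so that no spurious relation linking the ranges \( k\le m \) and \( k>m \) survives.
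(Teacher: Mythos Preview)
Your overall strategy matches the paper's: invoke Lemma~\ref{le:SKTpuretypeII} for the basis and the~$\alpha_j$, then analyse the residual two-forms $\Theta_k$ (the paper's~$\nu_k$) via the Jacobi-type equation, the type condition, and the two remaining evaluations of~\eqref{eq:SKTsheardata}. But your ``short piece of linear algebra'' is false as stated: a $\bC$-valued two-form with no $(0,2)$-part that is divisible by the real one-form~$\alpha_k$ need \emph{not} have $(1,1)$-part proportional to $\alpha_k\wedge J^*\alpha_k$. For instance, on a $V_J$ of complex dimension at least two with $\alpha_k=\re(dw_1)$, the form $\Theta=\alpha_k\wedge dw_2$ has zero $(0,2)$-part yet its $(1,1)$-part is $\tfrac12\,d\bar w_1\wedge dw_2$, which is not a multiple of $dw_1\wedge d\bar w_1$. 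So from Jacobi and type alone you cannot conclude $\varphi_k=z_k\,\alpha_k\wedge J^*\alpha_k$, and your subsequent plan---to use the three-in-$V_J$ SKT evaluation merely to kill a residual~$\psi_k$---is aimed at the wrong target.

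The repair is exactly the step you flagged as delicate. The paper computes the three-in-$V_J$, one-in-$\derg$ case of~\eqref{eq:SKTsheardata} and shows it reduces precisely to $\alpha_k\wedge J^*\Theta_k=0$ (equivalently $J^*\alpha_k\wedge\Theta_k=0$), with no extra $\alpha_k\wedge\psi_k$-term. Pairing this with the Jacobi relation $\alpha_k\wedge\Theta_k=0$ gives $\alpha_k^{1,0}\wedge\Theta_k=0$ and $\alpha_k^{0,1}\wedge\Theta_k=0$, which forces $\Theta_k\in\spa{\alpha_k^{1,0}\wedge\alpha_k^{0,1}}=\spa{\alpha_k\wedge J^*\alpha_k}$; thus $\psi_k=0$ and $\varphi_k=z_k\,\alpha_k\wedge J^*\alpha_k$ come out \emph{simultaneously} from the two conditions combined, not in two stages. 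With this correction the remainder of your argument---the vanishing of the composition term on $\Lambda^4 V_J$, the resulting quadratic relation $\sum_{k>m}\varphi_k\wedge\overline{\varphi_k}-\psi_k\wedge\overline{\psi_k}=0$, the linear-independence condition from $\derg=\im\omega$, and the converse direction---agrees with the paper's proof.
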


In the above, we have used complex notation, so
\( (x+iy)Z = xZ+yJZ \), etc.

\begin{proof}
  Use Lemma~\ref{le:SKTpuretypeII} to choose a complex unitary basis
  \( Y_1,\dots,Y_s \) so that \eqref{eq:Z-Yj-m} holds, with
  \( \alpha_{1},\dots,\alpha_{m} \) non-zero, and \( [Z,Y_{k}] = 0 \),
  for \( k > m \).  Using shear data, define
  \begin{equation*}
    \nu \coloneqq \omega(\any,\any)|_{\Lambda^2 U_J}\in \Lambda^2
    U_J^*\otimes \mfa = \Lambda^2 U_J^*\otimes \mfa_{J}.
  \end{equation*}
  In complex notation, we may write
  \( \nu = \sum_{j=1}^{s} \nu_j Y_j \) with
  \( \nu_j\in \Lambda^2 U_J^*\otimes \bC \).
  The first equation of~\eqref{eq:complexsheardata} yields
  \begin{equation*}
    \begin{split}
      0&= \sum_\cycl \omega(\omega(Z_1,Z_2),Z_3)
         = \sum_\cycl \omega \paren[\bigg]{ \sum_{j=1}^s
         \nu_j(Z_1,Z_2)Y_{j}, Z_3 }\\
       &= \sum_{j=1}^m\sum_\cycl  \alpha_j(Z_3)\nu_j(Z_1,Z_2) JY_j
         = \sum_{j=1}^m (\alpha_j\wedge\nu_j)(Z_1,Z_2,Z_3) JY_j,
    \end{split}
  \end{equation*}
  for all \( Z_{k} \in U_J \).
  Hence, there exist complex one-forms
  \( \zeta_{j} \in U_J^*\otimes\bC \), for \( j = 1,\dots, m \), such
  that
  \begin{equation}
    \label{eq:nu-al-ze}
    \nu_j=\alpha_j\wedge \zeta_{j}.
  \end{equation}
  Evaluating \eqref{eq:SKTsheardata} on \( Z_1,Z_2,Z_3\in U_J \) and
  some \( Y \in \spa{Y_{j},JY_{j}} \) gives
  \begin{equation}
    \label{eq:Z3Y}
    \begin{split}
      0&=\sum_\cycl
         \begin{aligned}[t]
           &g(\omega(JZ_1,JZ_2),\omega(Z_3,Y)) +
           g(\omega(JZ_3,JY),\omega(Z_1,Z_2)) \\
           &\qquad +g(\omega(J\omega(Z_1,Z_2),JZ_3),Y).
         \end{aligned}
      \\
       &= \sum_\cycl \sum_{k=1}^{s}
         \begin{aligned}[t]
           & - g\paren[\big]{J^*\nu_k(Z_1,Z_2) Y_k, \alpha_j(Z_3)JY} +
           g\paren[\big]{\alpha_{j}(JZ_{3})Y,
           \nu_{k}(Z_{1},Z_{2})Y_{k}} \\
           &\qquad -
           g\paren[\big]{\alpha_{k}(JZ_{3})\nu_{k}(Z_{1},Z_{2})Y_{k},
           Y}
         \end{aligned}
      \\
       &= - \sum_\cycl \alpha_j(Z_3) g\paren[\big]{J^*\nu_j(Z_1,Z_2)
         Y_j, JY}.
    \end{split}
  \end{equation}
  This holds trivially for \( j > m \).
  For \( j \leqslant m \) it gives
  \begin{equation*}
    \paren[\big]{\alpha_{j}\wedge J^{*}\alpha_{j} \wedge
    g(J^{*}\zeta_{j}(\any)Y_{j},JY)}(Z_{1},Z_{2},Z_{3}) = 0.
  \end{equation*}
  Taking \( Y = Y_{j} \) and then \( Y = JY_{j} \) we get that
  \( \alpha_{j}\wedge J^{*}\alpha_{j} \wedge J^{*}\zeta_{j} = 0 \).
  So \( \zeta_{j} \in \spa{\alpha_{j},J^{*}\alpha_{j}} \) and
  \( \nu_{j} = - z_{j}\alpha_{j}\wedge J^{*}\alpha_{j} \) for some
  \( z_{j} \in \bC \).

  In complex notation the second equation
  of~\eqref{eq:complexsheardata} is \( J^*\nu_j=\nu_j-i\, J.\nu_j \).
  This says that the \( (0,2) \)-part of \( \nu_{j} \) vanishes.
  For \( j \leqslant m \), we already have that \( \nu_{j} \) is
  type~\( (1,1) \).
  For \( j > m \), we write \( \nu_{j} = -\varphi_{j} - \psi_{j} \)
  with \( \varphi_{j} \) type \( (1,1) \) and \( \psi_{j} \) type
  \( (2,0) \).

  Now the only remaining equation to satisfy
  is~\eqref{eq:SKTsheardata} evaluated on \( \Lambda^4 U_J \).
  In this case, only the first term of~\eqref{eq:SKTsheardata}
  contributes, since \( \im(J^{*}\omega) = \mfa \perp U_{J} \), so we
  have
  \begin{equation*}
    \begin{split}
      0
      &= \cA(g(J^*\omega(\any,\any),
        \omega(\any,\any)))|_{\Lambda^4 U_J}
        = \sum_{j=1}^s \re(J^*\nu_j \wedge \overline{\nu_{j}})) \\
      &=\sum_{k=m+1}^s \re\paren[\big]{ (\varphi_{k} -
        \psi_{k})\wedge \overline{(\varphi_{k} + \psi_{k})}}
        =\sum_{k=m+1}^s \varphi_{k} \wedge \overline{\varphi_{k}} -
        \psi_j\wedge\overline{\psi_j},
    \end{split}
  \end{equation*}
  and the claimed result.
\end{proof}

As the metric on \( V_{J} \) plays no role in
Theorem~\ref{th:SKTpuretypeII}, we get the following version of
Corollary~\ref{co:changeSKTmetriccodim2} in arbitrary codimension.

\begin{corollary}
  \label{co:changeSKTmetric}
  Let \( (\mfg,g,J) \) be a two-step solvable SKT Lie algebra of pure
  type~II.
  If \( \tilde{g} \) is another Hermitian metric on \( (\mfg,J) \)
  with \( \tilde{g}|_{\derg}=g|_{\derg} \) and
  \( \derg^{\perp_{\tilde{g}}}=\derg^{\perp_g} \), then
  \( \tilde{g} \) is SKT too.
\end{corollary}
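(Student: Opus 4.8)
The plan is to read the result off directly from Theorem~\ref{th:SKTpuretypeII}, exploiting the fact that every ingredient appearing in that classification, and every constraint those ingredients must satisfy, is insensitive to the restriction of the metric to~$V_J$.

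First I would record the metric-independent data. Being of pure type~II means $\derg$ is a complex subspace of~$\mfg$, which is a condition on~$J$ alone; hence $(\mfg,\tilde g,J)$ is again of pure type~II, with $\derg_J=\derg$ for both metrics. The hypothesis $\derg^{\perp_{\tilde g}}=\derg^{\perp_g}$ then shows that the orthogonal decomposition $\mfg=\derg\oplus V_J$ of Definition~\ref{def:V} is the same for $g$ and~$\tilde g$, so the subspace $V_J$, together with the complex structure $J|_{V_J}$ and the exterior algebra of~$V_J^*$, is common to both. Finally, since $\tilde g|_\derg=g|_\derg$ and $J$ is unchanged, any complex unitary basis of $(\derg,g,J)$ is automatically a complex unitary basis of $(\derg,\tilde g,J)$.

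Next I would apply the ``only if'' direction of Theorem~\ref{th:SKTpuretypeII} to the SKT structure $(\mfg,g,J)$, obtaining a complex unitary basis $Y_1,\dots,Y_s$ of~$\derg$, an integer~$m$, one-forms $\alpha_1,\dots,\alpha_m\in V_J^*\setminus\{0\}$, numbers $z_1,\dots,z_m\in\bC$, and forms $\varphi_k\in\Lambda^{1,1} V_J^*$, $\psi_k\in\Lambda^{2,0} V_J^*$ for $m<k\leqslant s$, realising the Lie brackets displayed there and satisfying $\sum_{k=m+1}^s\varphi_k\wedge\overline{\varphi_k}-\psi_k\wedge\overline{\psi_k}=0$ together with the linear independence of the $\varphi_k+\psi_k$. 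The core of the argument is that this very same data already witnesses the SKT property with respect to~$\tilde g$: the bracket relations, read off with respect to the $Y_j$, recover $\alpha_j$, $z_j$ and $\varphi_k+\psi_k$ intrinsically from the Lie bracket $[\any,\any]_{\mfg}$; the splitting of the relevant two-forms on~$V_J$ into types $(1,1)$ and $(2,0)$ uses only $J|_{V_J}$; and both the wedge identity and the linear independence live in the exterior algebra of~$V_J^*$ and never involve a metric on~$V_J$. Therefore the same $Y_1,\dots,Y_s$, now viewed as a $\tilde g$-unitary basis of~$\derg$, the same $V_J=\derg^{\perp_{\tilde g}}$, and the same forms $\alpha_j$, $\varphi_k$, $\psi_k$ verify all the hypotheses of the ``if'' direction of Theorem~\ref{th:SKTpuretypeII} for~$\tilde g$, and I conclude that $(\mfg,\tilde g,J)$ is SKT.

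I do not expect a genuine obstacle: the statement is really a restatement of the remark preceding it, namely that the pure type~II SKT characterisation sees the metric only through $g|_\derg$ and the subspace $V_J=\derg^{\perp}$. The only point requiring a little care is the bookkeeping --- checking that the normalisation of the $Y_j$ and the ``only non-zero Lie brackets'' clause genuinely survive the change of metric on~$V_J$ --- which is immediate once one has $\derg^{\perp_{\tilde g}}=V_J$ and $\tilde g|_\derg=g|_\derg$.
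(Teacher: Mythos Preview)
Your proposal is correct and is essentially an expanded version of the paper's own argument: the paper simply remarks that the metric on $V_J$ plays no role in Theorem~\ref{th:SKTpuretypeII} and states the corollary without further proof. Your careful verification that pure type~II, the subspace~$V_J$, the unitary basis of~$\derg$, the type decomposition of forms on~$V_J$, and all the algebraic constraints are insensitive to $g|_{V_J}$ is exactly what underlies that remark.
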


Moreover, we may also change an SKT metric on a two-step solvable SKT
Lie algebra of pure type~II in such a way that
Theorem~\ref{th:SKTpuretypeII} holds with \( z_1=\dots=z_k=0 \).

\begin{proposition}
  \label{pro:changeSKTmetric}
  Let \( (\mfg,g,J) \) be a two-step solvable SKT Lie algebra of pure
  type~II.
  Then \( (\mfg,J) \) admits a compatible SKT metric \( \tilde{g} \)
  with
  \begin{equation*}
    \derg = [\tilde{V}_{J},\derg] \oplus [\tilde{V}_J,\tilde{V}_J]
  \end{equation*}
  as a Hermitian orthogonal direct sum, where \( \tilde{V}_J \) is the
  \( \tilde{g} \)-orthogonal complement of~\( \derg \).
\end{proposition}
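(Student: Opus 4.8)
The plan is to start from the SKT metric~\( g \) and invoke Theorem~\ref{th:SKTpuretypeII}, obtaining the complex unitary basis \( Y_1,\dots,Y_s \) of~\( (\derg,g,J) \), the integer~\( m \), the one-forms \( \alpha_1,\dots,\alpha_m \in V_J^*\setminus\Set{0} \), the complex numbers \( z_1,\dots,z_m \), and the \( (1,1) \)- and \( (2,0) \)-forms \( \varphi_k,\psi_k \) on~\( V_J \). The defect preventing the desired orthogonal splitting \( \derg = [\tilde V_J,\derg] \oplus [\tilde V_J,\tilde V_J] \) is exactly the presence of the terms \( z_j\,(\alpha_j\wedge J^*\alpha_j)(Z,W)\,Y_j \) in the bracket \( [Z,W] \) for \( j \leqslant m \): these put part of \( [V_J,V_J] \) back into the span of the \( Y_j \) with \( j\leqslant m \), which is \( [V_J,\derg] \). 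So the goal is to produce a new compatible metric \( \tilde g \), agreeing with~\( g \) on~\( \derg \) but with a different orthogonal complement \( \tilde V_J \), for which the analogue of~\eqref{eq:Z-Yj-m} and the second displayed bracket in Theorem~\ref{th:SKTpuretypeII} hold with all \( z_j = 0 \).

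The key idea is to shear the complement: rather than changing the Lie algebra, I change which complement of~\( \derg \) we declare to be \( \tilde V_J \). Concretely, I look for vectors \( \tilde Z = Z + \Phi(Z) \) for \( Z \in V_J \), where \( \Phi\colon V_J \to \derg_J \) is a suitable linear map chosen so that (i)~\( \tilde V_J \coloneqq \Set{Z + \Phi(Z) : Z\in V_J} \) is still \( J \)-invariant, which forces \( \Phi \) to be complex-linear, and (ii)~computing \( [\tilde Z,\tilde W] = [Z,W] + [Z,\Phi(W)] - [W,\Phi(Z)] \) cancels the \( \sum_j z_j(\alpha_j\wedge J^*\alpha_j)(Z,W)Y_j \) term. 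Since \( [Z,Y_j] = \alpha_j(Z)JY_j \) for \( j\leqslant m \), writing \( \Phi(Z) = \sum_{j\le m} \phi_j(Z)\,Y_j \) (complex notation) gives \( [Z,\Phi(W)] - [W,\Phi(Z)] = \sum_{j\le m} i\,(\alpha_j(Z)\phi_j(W) - \alpha_j(W)\phi_j(Z))\,JY_j = \sum_{j\le m} i\,(\alpha_j\wedge\phi_j)(Z,W)\,JY_j \); one then solves \( i\,\alpha_j\wedge\phi_j = -z_j\,\alpha_j\wedge J^*\alpha_j \), which is possible because \( \alpha_j\neq 0 \) (take \( \phi_j \) a suitable complex multiple of~\( J^*\alpha_j \), equivalently of \( \alpha_j - i J^*\alpha_j \)). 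Having fixed \( \Phi \), I define \( \tilde g \) by declaring \( \derg \perp_{\tilde g} \tilde V_J \), setting \( \tilde g|_\derg = g|_\derg \), and putting any compatible metric on~\( \tilde V_J \) (for instance transporting \( g|_{V_J} \) via \( Z\mapsto Z+\Phi(Z) \)); compatibility with~\( J \) follows since both summands are \( J \)-invariant and \( \tilde g|_\derg \) is.

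It then remains to check \( \tilde g \) is SKT and that it realises the claimed orthogonal decomposition. For the SKT property I apply Corollary~\ref{co:changeSKTmetric}: \( \tilde g \) agrees with \( g \) on~\( \derg \), but its orthogonal complement of~\( \derg \) is \( \tilde V_J \neq V_J \) in general, so Corollary~\ref{co:changeSKTmetric} does not literally apply as stated — instead I re-run Theorem~\ref{th:SKTpuretypeII} directly in the \( \tilde g \)-adapted decomposition \( \mfg = \derg_J \oplus \tilde V_J \), where by construction the \( z_j \)'s now vanish while \( m \), the \( \alpha_j \) (pulled back along \( \tilde V_J \cong V_J \)), the \( \varphi_k,\psi_k \) and the quadratic relation \( \sum_{k>m}\varphi_k\wedge\overline{\varphi_k} - \psi_k\wedge\overline{\psi_k} = 0 \) are unchanged, so the SKT criterion of Theorem~\ref{th:SKTpuretypeII} is still satisfied. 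With \( z_j = 0 \), the brackets read \( [\tilde Z,Y_j] = \alpha_j(\tilde Z)JY_j \) for \( j\le m \), \( [\tilde Z,Y_k]=0 \) for \( k>m \), and \( [\tilde Z,\tilde W] = \sum_{k>m}(\varphi_k+\psi_k)(\tilde Z,\tilde W)Y_k \); hence \( [\tilde V_J,\derg] = \spa{JY_j : j\le m} = \spa{Y_j, JY_j : j\le m} \) (using \( \alpha_j\neq 0 \)) and \( [\tilde V_J,\tilde V_J] \subseteq \spa{Y_k : k>m} \), and these are \( \tilde g \)-orthogonal since \( Y_1,\dots,Y_s \) is \( g \)-unitary hence \( \tilde g \)-unitary on~\( \derg \). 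Their sum is all of~\( \derg \) because \( \derg = \im\omega = [\mfg,\mfg] \).

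The main obstacle is the bookkeeping in the second and third steps: one must be careful that the complex-linear map \( \Phi \) lands in \( \derg_J = \spa{Y_j : j\le m} \) (so that \( \tilde V_J \) is complementary to all of~\( \derg \), including the \( Y_k \) with \( k>m \)), and that re-deriving Theorem~\ref{th:SKTpuretypeII} in the new frame genuinely leaves the "nilpotent data" \( (\varphi_k,\psi_k) \) and the quadratic constraint untouched — this is really the assertion that the \( U_J \)-only component of the shear data, and hence the last equation \( \sum \varphi_k\wedge\overline{\varphi_k} - \psi_k\wedge\overline{\psi_k}=0 \), is insensitive to the choice of complement, which one sees from the proof of Theorem~\ref{th:SKTpuretypeII} since that part of the argument only used \( \omega|_{\Lambda^2 U_J} \) and \( \im(J^*\omega)\perp U_J \), both of which behave correctly under \( Z\mapsto Z+\Phi(Z) \) modulo~\( \derg \).
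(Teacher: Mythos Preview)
Your approach is essentially the paper's own: shift the complement of \( \derg \) by a complex-linear map \( \Phi \) (the paper writes \( r(Z)=\sum_{j\le m} z_j(\alpha_j-iJ^*\alpha_j)(Z)\,Y_j \) explicitly) designed to cancel the \( z_j \)-terms, declare \( \tilde V_J \) the new orthogonal complement, and re-invoke Theorem~\ref{th:SKTpuretypeII} to verify the new metric is SKT and read off the splitting. Minor cleanups: in your correction term the result is \( i\sum_j(\alpha_j\wedge\phi_j)\,Y_j \) (not \( JY_j \)); a complex multiple of the real form \( J^*\alpha_j \) is not complex-linear on \( V_J \), so drop the ``equivalently'' and keep only \( \alpha_j-iJ^*\alpha_j \); and in pure type~II one has \( \derg_J=\derg \), so where you constrain the range of \( \Phi \) you mean \( \spa{Y_j:j\leqslant m}=[V_J,\derg] \), not \( \derg_J \).
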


\begin{proof}
  We use the notation from Theorem~\ref{th:SKTpuretypeII}.
  Define an injective \( R\colon V_J\to \mfg \) by
  \begin{equation*}
    R(Z) \coloneqq Z+r(Z), \qquad\text{where \(
    r(Z) = \sum_{j=1}^{k} (\alpha_{j} - i J^{*}\alpha_{j})(Z)
    z_{j}Y_{j} \)}.
  \end{equation*}
  As \( \alpha_{j} - iJ^{*}\alpha_{j} \) is type \( (1,0) \), we have
  that \( R \) is complex linear, so
  \( \tilde{V}_{J} \coloneqq R(V_{J})\) is a \( J \)-invariant
  complement to \( \derg \) in~\( \mfg \).
  We get a Hermitian metric \( \tilde{g} \) on~\( \mfg \) by declaring
  \( \derg \) to be to be \( \tilde{g} \)-orthogonal to
  \( \tilde{V}_{J} \), letting \( \tilde{g} \) be \( g \) on
  \( \derg \) and setting
  \( \tilde{g}|_{\tilde{V}_J} \coloneqq (R^{-1})^*(g|_{V_J}) \).

  For \( Z \in V_{J} \) and \( Y \in \derg \), we have
  \( [R(Z),Y]=[Z+r(Z),Y]=[Z,Y] \), so
  \( [R(Z),Y_j]=i\alpha_j(Z)\, Y_j \) for \( j \leqslant m \) and
  \( [R(Z),Y_k]=0 \) for \( k > m \).  Moreover, we have
  \begin{equation*}
    \begin{split}
      \MoveEqLeft\relax
      [r(Z),W] + [Z,r(W)]
      =\sum_{j=1}^{m} (\alpha_{j}-iJ^*\alpha_{j})(Z)
      z_{j}[Y_{j},W] + (\alpha_{j}-i J^*\alpha_{j})(W)z_{j}
      [Z,Y_{j}]\\
      &=\sum_{j=1}^{m} ((\alpha_j-i\,
        J^*\alpha_j)\wedge \alpha_j)(Z,W) z_{j} JY_j
        =-\sum_{j=1}^{m} (z_j\, \alpha_j\wedge J^*\alpha_j)(Z,W)\,
        Y_j.
    \end{split}
  \end{equation*}
  So
  \begin{equation*}
    \begin{split}
      \MoveEqLeft\relax
      [R(Z),R(W)]
      =[Z,W] + [r(Z),W] + [Z,r(W)]\\
      &=\sum_{k=m+1}^s \paren[\big]{\varphi_{k} + \psi_{k}} (Z,W)\,
        Y_k
        =\sum_{k=m+1}^s
        \paren[\big]{\tilde{\varphi}_{k} + \tilde{\psi}_{k}}
        (R(Z),R(W))\, Y_k,\\
    \end{split}
  \end{equation*}
  where \( \tilde{\varphi}_{k} = (R^{-1})^{*}\varphi_{k} \) and
  \( \tilde{\psi}_{j} = (R^{-1})^{*}\psi_{j} \).
  We may now apply Theorem~\ref{th:SKTpuretypeII}, to conclude that
  \( \tilde{g} \) is SKT.
  The non-vanishing of the \( \alpha_{j} \), \( j=1,\dots,m \), gives
  \( [\tilde{V}_{J},\derg] = \spa{Y_{1},\dots,Y_{m}} \), and the
  linear independence of \( \tilde{\varphi}_{k} + \tilde{\psi}_{k} \),
  \( k = m+1,\dots,s \), implies
  \( [\tilde{V}_{J},\tilde{V}_{J}] = \spa{Y_{m+1},\dots,Y_{s}} \), so
  these two spaces are orthogonal.
\end{proof}

These preparations now allow us to prove

\begin{theorem}
  \label{th:compatibilitypuretypeII}
  Let \( (\mfg, J) \) be a unimodular two-step solvable Lie
  algebra~\( \mfg \) with complex structure~\( J \) of pure type II
  that is SKT and is balanced.  Then \( (\mfg, J) \) is K\"ahler.
\end{theorem}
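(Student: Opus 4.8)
The plan is to produce a single compatible metric on~$\mfg$ that is simultaneously SKT and balanced, and then invoke Proposition~\ref{pro:balanced+SKT=Kahler}. First I would normalise the given SKT metric: by Proposition~\ref{pro:changeSKTmetric} one may take the compatible SKT metric~$\tilde{g}$ so that, with $Y_{1},\dots,Y_{s}$ a $\tilde{g}$-unitary basis of~$\derg$ and $\tilde{V}_{J} \coloneqq \derg^{\perp_{\tilde{g}}}$, the only non-zero brackets are $[Z,Y_{j}] = \alpha_{j}(Z)JY_{j}$ for $j\le m$ with $\alpha_{j}\ne0$, $[Z,Y_{k}] = 0$ for $k>m$, and $[Z,W] = \sum_{k=m+1}^{s}(\varphi_{k}+\psi_{k})(Z,W)Y_{k}$ for $Z,W\in\tilde{V}_{J}$, where $\varphi_{k}$ is of type $(1,1)$ and $\psi_{k}$ of type $(2,0)$ on $(\tilde{V}_{J},J)$ (the quadratic relation among the $\varphi_{k}$ and $\psi_{k}$ in Theorem~\ref{th:SKTpuretypeII} plays no role). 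Now two facts come into play. By Corollary~\ref{co:changeSKTmetric} the SKT property depends only on $\tilde{g}|_{\derg}$ and on the $J$-invariant complement $\tilde{V}_{J}$, so one remains SKT upon keeping these fixed and replacing $\tilde{g}|_{\tilde{V}_{J}}$ by any compatible metric on $\tilde{V}_{J}$. And, $\mfg$ being unimodular, Corollary~\ref{co:balancedunimodular} says a compatible metric is balanced if and only if $C=0$; evaluating $C$ on a unitary basis of $(\tilde{V}_{J},J)$, and using that a $(2,0)$-form $\psi$ has $\psi(X,JX)=0$ so that the $\psi_{k}$ drop out, one gets $C = \sum_{k=m+1}^{s}(\tr_{\omega}\varphi_{k})\,Y_{k}$, where $\omega$ is the fundamental form of the metric on $\tilde{V}_{J}$ and $\tr_{\omega}$ denotes contraction with~$\omega$. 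So the problem reduces to finding a compatible metric on $(\tilde{V}_{J},J)$ whose fundamental form $\omega$ has $\tr_{\omega}\varphi_{k}=0$ for every $k>m$.

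Such a form is supplied by the balanced hypothesis itself. Let $\hat{g}$ be a compatible balanced metric on~$\mfg$ and set $\hat{V}_{J} \coloneqq \derg^{\perp_{\hat{g}}}$; since $\hat{V}_{J}$ and $\tilde{V}_{J}$ are both $J$-invariant complements to $\derg$, one has $\hat{V}_{J} = (\id+\phi)(\tilde{V}_{J})$ for a unique complex-linear $\phi\colon\tilde{V}_{J}\to\derg$. Because $[Z,Y_{k}]=0$ for $k>m$, transporting the bracket of~$\mfg$ from $\hat{V}_{J}$ to $\tilde{V}_{J}$ along $\id+\phi$ changes only the $\spa{Y_{1},\dots,Y_{m}}$-component and leaves the $Y_{k}$-component ($k>m$) equal to $\varphi_{k}+\psi_{k}$. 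Writing $\hat{\omega}$ for the fundamental form of the Hermitian metric $(\id+\phi)^{*}(\hat{g}|_{\hat{V}_{J}})$ on $(\tilde{V}_{J},J)$ and applying Corollary~\ref{co:balancedunimodular} to~$\hat{g}$, the same computation gives $\tr_{\hat{\omega}}\varphi_{k}=0$ for all $k>m$. Finally I would define $g$ on~$\mfg$ by declaring $\derg$ and $\tilde{V}_{J}$ to be $g$-orthogonal, setting $g|_{\derg}\coloneqq\tilde{g}|_{\derg}$, and taking $g|_{\tilde{V}_{J}}$ to be the metric with fundamental form~$\hat{\omega}$. By Corollary~\ref{co:changeSKTmetric} this $g$ is SKT; its associated $C$ equals $\sum_{k>m}(\tr_{\hat{\omega}}\varphi_{k})Y_{k}=0$, so it is balanced by Corollary~\ref{co:balancedunimodular}; hence $g$ is Kähler by Proposition~\ref{pro:balanced+SKT=Kahler}, and $(\mfg,J)$ is Kähler.

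The step I expect to be the main obstacle is not a computation but the matching of data: the given SKT and balanced metrics come a priori with different $J$-invariant complements to~$\derg$, so one must verify that the only quantity the balanced condition ``sees'', namely the traces $\tr_{\omega}\varphi_{k}$ of the $(1,1)$-parts with $k>m$, is independent of the chosen complement. This independence is exactly what $[Z,Y_{k}]=0$ for $k>m$ provides, and it is also why one should pass first to the normal form of Proposition~\ref{pro:changeSKTmetric} rather than work directly from Theorem~\ref{th:SKTpuretypeII}: a surviving term $z_{j}\,\alpha_{j}\wedge J^{*}\alpha_{j}$ would add $z_{j}\norm{\alpha_{j}}_{\omega}^{2}$ to~$C$, which no positive~$\omega$ could cancel, so such terms have to be absorbed beforehand.
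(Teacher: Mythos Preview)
Your argument is correct and follows essentially the same route as the paper's proof: both first normalise the SKT metric via Proposition~\ref{pro:changeSKTmetric} so that $[\tilde V_J,\derg]$ and $[\tilde V_J,\tilde V_J]$ are orthogonal, then build a hybrid metric by keeping $\tilde g|_{\derg}$ and transporting $\hat g|_{\hat V_J}$ back to $\tilde V_J$ along the complex-linear isomorphism $\id+\phi$ (the paper's $R$), and use Corollary~\ref{co:changeSKTmetric} together with the fact that the cross terms $[\phi(Z),W]$ land in $\spa{Y_1,\dots,Y_m}$ to verify $C=0$. Your phrasing of the $C$-computation in terms of $\tr_\omega\varphi_k$ and the observation that $(2,0)$-forms satisfy $\psi(X,JX)=0$ is a slightly more explicit presentation of the same identity the paper uses when it computes $g(Y,C)$ for $Y\in[\tilde V_J,\tilde V_J]$; the underlying mechanism and the role of Proposition~\ref{pro:changeSKTmetric} are identical.
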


\begin{proof}
  Let \( \tilde{g} \) be an SKT metric and \( \hat{g} \) be a balanced
  metric, both compatible with \( (\mfg,J) \).
  By Proposition~\ref{pro:changeSKTmetric}, we may assume that
  \( \derg \) splits as an \( \tilde{g} \)-orthogonal direct sum of
  the complex spaces \( [\tilde{V}_{J},\derg] \) and
  \( [\tilde{V}_{J},\tilde{V}_{J}] \).
  Let \( \hat{V}_J \) be the \( \hat{g} \)-orthogonal complement to
  \( \derg \).
  Then there is a complex vector space isomorphism
  \( R\colon \tilde{V}_J\to \hat{V}_J \) of the form
  \( R(Z) = Z + r(Z) \) with
  \( r\colon \tilde{V}_J\rightarrow \derg \) complex linear.

  We define a new metric \( g \) on~\( \mfg \) by requiring
  \( \derg \) to be \( g \)-orthogonal to \( \tilde{V}_{J} \), putting
  \( g \) to be \( \tilde{g} \) on \( \derg \) and letting \( g \) on
  \( \tilde{V}_{J} \) be \( R^{*}(\hat{g}|_{\hat{V}_{J}}) \).
  This metric \( g \) is Hermitian and, by
  Corollary~\ref{co:changeSKTmetric}, SKT.

  To show that \( g \) is also balanced, recall
  Proposition~\ref{pro:balanced}, which for pure type~II implies the
  existence of a \( (\hat{g},J) \)-unitary basis
  \( \hat{Z}_1,\dots,\hat{Z}_{2\ell} \) of \( \hat{V}_J \) with
  \begin{equation*}
    \hat{C} = \sum_{j=1}^{\ell} [\hat{Z}_{2j-1},\hat{Z}_{2j}] = 0.
  \end{equation*}
  Defining \( Z_j\in \tilde{V}_J \) by
  \( Z_{j} = R^{-1}(\hat{Z}_{k}) \), we get a unitary basis
  for~\( (\tilde{V}_J,g,J) \).
  Let \( C = \sum_{j=1}^{\ell} [Z_{2j-1},Z_{2j}] \) which lies in
  \( [\tilde{V}_{J},\tilde{V}_{J}] \subset \derg \).
  As \( \derg \) is the \( g \)-orthogonal direct sum of
  \( [\tilde{V}_{J},\tilde{V}_{J}] \) and \( [\tilde{V}_{J},\derg] \),
  we have for \( Y \in [\tilde{V}_{J},\tilde{V}_{J}] \), that
  \begin{equation*}
    \begin{split}
      0&=g\paren{Y,\hat{C}}
         =\sum_{j=1}^{\ell} g(Y,[R(Z_{2j-1}),R(Z_{2j})]) \\
       &=\sum_{j=1}^\ell g(Y,[Z_{2j-1},Z_{2j}]) +
         g(Y,[Z_{2j-1},r(Z_{2j})]) + g(Y,[r(Z_{2j-1}),Z_{2j}])\\
       &=\sum_{j=1}^\ell g(Y,[Z_{2j-1},Z_{2j}]) = g(Y, C).
    \end{split}
  \end{equation*}
  We conclude that \( C = 0 \).  As \( \mfg \) is unimodular,
  Corollary~\ref{co:balancedunimodular} implies that \( g \)~is also
  balanced.
  By Proposition~\ref{pro:balanced+SKT=Kahler}, we learn that
  \( g \)~is K\"ahler.
\end{proof}

\subsection{Pure type~III}
\label{subsec:puretypeIII}

Pure type~III means that \( V_{J} = 0 \), so
\( \mfg = \derg + J\derg \).

\begin{theorem}
  \label{th:compatibilitypuretypeIII}
  Let \( (\mfg, J) \) be a unimodular two-step solvable Lie
  algebra~\( \mfg \) with complex structure~\( J \) of pure type~III.
  Then \( (\mfg,J) \) cannot be both SKT and balanced.
\end{theorem}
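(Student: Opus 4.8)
The plan is to argue by contradiction, mirroring the proof of Theorem~\ref{th:compatibilitypuretypeII}. Suppose \( (\mfg,J) \) is a non-zero unimodular two-step solvable Lie algebra of pure type~III admitting a compatible SKT metric \( \tilde{g} \) and a compatible balanced metric \( \hat{g} \). As \( V_J = 0 \), if \( r = 0 \) then \( \mfg = \derg_J \subseteq \derg \), so \( \mfg = \derg \) would be perfect and, being two-step solvable, would vanish; hence \( r \geqslant 1 \). The idea is to produce a single compatible metric \( g \) that is simultaneously SKT and balanced — hence K\"ahler by Proposition~\ref{pro:balanced+SKT=Kahler} — and then to contradict unimodularity using the explicit description of K\"ahler pure type~III algebras in Corollary~\ref{co:Kahlerpuretype}.

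First I would record a general fact implicit in Lemma~\ref{le:SKTsheardata}, extending Corollary~\ref{co:changeSKTmetric}: in the shear picture (with \( \mfa = \derg = \im\omega \)) every inner product in~\eqref{eq:SKTsheardata} is either between two elements of \( \derg \) or between an element of \( \derg \) and an element of \( \mfg \), so the SKT condition for \( \mfg \) depends only on \( g|_{\derg} \) and the orthogonal complement \( \derg^{\perp_{g}} \); dually, by Corollary~\ref{co:changebalancedmetric} the balanced condition depends only on \( V_r = (\derg_J)^{\perp_{g}} \) and \( g|_{V_r} \). To make these compatible I would establish the structure of two-step solvable SKT Lie algebras of pure type~III (the analogue of Theorem~\ref{th:SKTpuretypeII}) and from it an analogue of Proposition~\ref{pro:changeSKTmetric}, so that \( \tilde{g} \) may be replaced by an SKT metric for which \( \derg \) decomposes \( \tilde{g} \)-orthogonally into the part carrying the brackets \( [J\derg_r,\derg_J] \), the part carrying \( [J\derg_r,\derg_r] \), and a nilpotent remainder, with \( \derg^{\perp_{\tilde{g}}}\perp\derg_J \). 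One then builds \( g \) by keeping \( g|_{\derg} = \tilde{g}|_{\derg} \) and \( \derg^{\perp_{g}} = \derg^{\perp_{\tilde{g}}} \), and transporting \( \hat{g} \) on a complement of \( \derg \) via a complex-linear isomorphism, exactly as in the type~II proof. Then \( g \) is SKT by the fact above, and \( g \) is balanced because the condition \( \hat{C} = 0 \) of Corollary~\ref{co:balancedunimodular} for \( \hat{g} \) transfers to \( C = 0 \) for \( g \), the correction terms lying in \( [\derg^{\perp_{\tilde{g}}},\derg] \) and being \( g \)-orthogonal to the part of \( \derg \) in which \( C \) lies, just as in the proof of Theorem~\ref{th:compatibilitypuretypeII}.

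Granting this, \( (\mfg,g,J) \) is K\"ahler, so by Corollary~\ref{co:Kahlerpuretype}(III) there are an orthonormal basis \( X_1,\dots,X_r \) of \( \derg_r \) with \( J\derg_r\perp\derg_r \), a complex unitary basis \( Y_1,\dots,Y_s \) of \( \derg_J \), non-zero numbers \( \lambda_k \) and one-forms \( \alpha_j \) such that the only non-zero brackets are \( [JX_k,X_k]=\lambda_k X_k \) and \( [JX_k,Y_j]=\alpha_j(X_k)\,JY_j \) (up to anti-symmetry and complex-linear extension on \( \derg_J \)). With respect to the vector-space decomposition \( \mfg = \derg_J \oplus \derg_r \oplus J\derg_r \) the operator \( \ad(JX_k) \) is block-diagonal: on \( \derg_J \) it acts as \( \alpha_j(X_k)J \) on the complex line spanned by \( Y_j,JY_j \), hence with trace zero; on \( \derg_r \) it sends \( X_m\mapsto\delta_{km}\lambda_k X_k \); and it annihilates \( J\derg_r \) since \( \omega(J\derg_r,J\derg_r)=0 \). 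Therefore \( \tr(\ad(JX_k)) = \lambda_k \neq 0 \), contradicting unimodularity, and the proof is complete.

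The step I expect to be the main obstacle is the structural result for SKT pure type~III and the accompanying metric modification: one cannot reuse the argument of Lemma~\ref{le:SKTpuretypeII} directly, since here \( U_J = 0 \) and there are no operators \( B_Z \); instead one has to run the \( \cC \)-type trace-positivity argument of Lemmas~\ref{le:symplecticendosareunitary} and~\ref{le:SKTpuretypeII} on the block-upper-triangular operators \( A_X = K_X + H_X + F_X \), \( X \in \derg_r \) (using \( G_X = 0 \) from Lemma~\ref{le:conscomplexsheardata}), together with Lemma~\ref{le:conscomplexsheardata} and a careful evaluation of~\eqref{eq:SKTsheardata} on four-tuples drawn from \( \derg_J \), \( \derg_r \) and \( J\derg_r \). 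Once this is available, combining with \( \hat{g} \) and verifying \( C = 0 \) is routine, as in pure type~II.
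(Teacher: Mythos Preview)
Your endgame is correct: a two-step solvable K\"ahler algebra of pure type~III has \( \tr(\ad(JX_k)) = \lambda_k \neq 0 \) by Corollary~\ref{co:Kahlerpuretype}(III), so cannot be unimodular. The difficulty is entirely in reaching a K\"ahler metric, and here your transfer-of-metrics strategy, which works in type~II, breaks down.

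The obstruction is that in pure type~III the subspaces governing the SKT and balanced conditions overlap. The SKT condition depends on \( g|_{\derg} \) and on \( \derg^{\perp_g} \); the balanced condition (via Corollary~\ref{co:changebalancedmetric}) depends on \( (\derg_J)^{\perp_g} = V_r \) and on \( g|_{V_r} \). In type~II these constraints are disjoint because \( \derg = \derg_J \) and \( V_r = 0 \). In type~III, however, \( \derg_r = \derg \cap V_r \) is non-trivial (indeed \( r \geqslant 1 \)), so you cannot simultaneously fix \( g|_{\derg} = \tilde g|_{\derg} \) and freely transport \( \hat g \) onto \( V_r \). Concretely, the quantity \( C = \sum_{k}[X_{2k-1},X_{2k}] \) in Corollary~\ref{co:balancedunimodular} is computed from a \( g \)-unitary basis of the \( 2r \)-dimensional space \( V_r \), while the complement of \( \derg \) in \( \mfg \) onto which you propose to transport \( \hat g \) is only \( r \)-dimensional; the remaining half of \( V_r \), namely \( \derg_r \), carries \( \tilde g \). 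Your claim that the correction terms lie in \( [\derg^{\perp_{\tilde g}},\derg] \) and are \( g \)-orthogonal to the part of \( \derg \) containing \( C \) does not survive this: in type~II the correction \( [Z,r(Z')] \) lands in \( [\tilde V_J,\derg] \), orthogonal to \( [\tilde V_J,\tilde V_J] \ni C \), but here \( C \in [V_r,V_r] \) and the brackets involved do not separate in that way. No amount of preparatory modification of \( \tilde g \) (your proposed analogue of Proposition~\ref{pro:changeSKTmetric}) removes this overlap, since \( \derg_r \neq 0 \) is intrinsic to type~III.

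The paper's proof avoids building a common metric altogether. It evaluates the SKT identity~\eqref{eq:SKTsheardata} for \( \tilde g \) on quadruples \( X,JX,Y_j,JY_j \) with \( X \in V_r \), sums the resulting equalities over a \( g \)-unitary basis of \( V_r \) (for the \emph{balanced} metric \( g \)), and uses \( C = 0 \) from Corollary~\ref{co:balancedunimodular} to kill the cross terms, forcing \( \re(\xi_j) \equiv 0 \) on \( \derg_{\tilde r} \). Hence \( \tr(\ad(X)|_{\derg_J}) = 0 \), and unimodularity gives \( \tr(\ad(X)|_{\derg_{\tilde r}}) = 0 \) for all \( X \in V_r \). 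The decisive extra input you are missing is Lemma~\ref{le:twostepSKT}(b) (from \cite{FrSw2}): there exists \( \tilde X_0 \in J\derg_{\tilde r} \) with \( [\tilde X_0,\tilde X] - \tilde X \in \derg_J \) for all \( \tilde X \in \derg_{\tilde r} \), so \( \ad(\tilde X_0) \) acts as the identity on \( \derg_{\tilde r} \) modulo \( \derg_J \); writing \( \tilde X_0 = X_0 + Y_0 \) with \( X_0 \in V_r \), this forces \( 0 = \tr(\ad(X_0)|_{\derg_{\tilde r}}) = \dim\derg_{\tilde r} \), the required contradiction. This direct interplay of the two metrics, rather than their fusion, is what makes the argument go through.
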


For the proof, we first need to recall some facts on two-step solvable
SKT Lie algebras from our previous paper.
In particular~\cite[Proposition 3.8 and Corollary~4.4]{FrSw2}
give:

\begin{lemma}
  \label{le:twostepSKT}
  Let \( (\mfg,g,J) \) be a two-step solvable SKT Lie algebra.  Then:
  \begin{enumerate}
  \item There exists a complex unitary basis \( Y_1,\dots,Y_s \) of
    \( \derg_J \) and complex-valued one-forms
    \( \xi_j\in (\derg_r)^{*}\otimes \bC \) such that
    \begin{equation*}
      [JX,Y_j] = \xi_j(X)\, Y_j
    \end{equation*}
    holds for all \( X\in \derg_r \) and all \( j\in \{1,\dots,s\} \).
  \item If \( \mfg = \derg + J\derg \), then there exists an
    \( X_0\in J\derg_r \) such that
    \begin{equation*}
      [X_0,X]-X \in \derg_J
    \end{equation*}
    for all \( X \in \derg_r \). \qed
  \end{enumerate}
\end{lemma}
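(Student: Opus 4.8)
The plan is to work throughout with complex shear data \( (\mfa,\omega) \) on \( (\bR^{2n},g,J) \), using the identification \( \mfa = \derg \) and \( -\omega = [\any,\any]_{\mfg} \) from the remark after Proposition~\ref{pro:complexsheardata}, so that \( \derg_J = \mfa_J \), \( \derg_r = \mfa_r \), \( V_J = U_J \), and \( [JX,\any]_{\mfg} = -A_X \) with \( A_X = \omega(JX,\any)|_{\mfa} \) for \( X\in \mfa_r \). By Lemma~\ref{le:conscomplexsheardata}(i), \( G_X = 0 \) for \( X\in \mfa_r \), so each \( A_X \) is block upper triangular for \( \mfa = \mfa_J\oplus \mfa_r \) with diagonal blocks \( K_X\in \End(\mfa_J) \) and \( F_X\in \End(\mfa_r) \); in particular \( A_X \) preserves \( \mfa_J \) and \( [JX,Y]_{\mfg} = -K_X(Y) \) for \( Y\in \derg_J \). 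By Lemma~\ref{le:conscomplexsheardata}(ii) each \( K_X \) is \( J \)-linear, and by part~(v) the family \( \Set{K_X : X\in \mfa_r} \) commutes.

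For part~(1) the goal is thus to simultaneously unitarily diagonalise this commuting family of complex endomorphisms of \( (\mfa_J,g,J) \): a common \( g \)-unitary eigenbasis \( Y_1,\dots,Y_s \) then yields the forms \( \xi_j \) via \( K_X(Y_j) = -\xi_j(X)Y_j \), giving \( [JX,Y_j]_{\mfg} = \xi_j(X)Y_j \). As the eigenvalues \( \xi_j(X) \) are genuinely complex, the \( K_X \) are not skew-Hermitian, so the content is that each \( K_X \) is \emph{normal}; a commuting family of normal operators is simultaneously unitarily diagonalisable. To establish normality I would feed the SKT identity~\eqref{eq:SKTsheardata} the quadruple \( \tilde{Y},\hat{Y}\in \mfa_J \) together with the complex pair \( X, JX \) for \( X\in \mfa_r \). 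This is the analogue of the computation in Lemma~\ref{le:SKTpuretypeII}, where the pair \( Z,JZ\in U_J \) produced \( B_Z, B_{JZ} \); here only \( JX \) contributes a non-zero operator \( A_X \) to \( \omega \), since \( \omega|_{\Lambda^2\mfa} = 0 \) annihilates the \( X \)-slot, so the relation should collapse to a single operator. Restricting the output vectors \( \tilde{Y},\hat{Y} \) to \( \mfa_J \) isolates the \( K_X \)-block, and I expect, as in~\eqref{eq:skew-part}, a \( g \)-symmetry constraint on an expression quadratic in \( K_X \) whose trace is a sum of squared norms of the ``bad'' components of \( K_X \); vanishing of this trace forces \( K_X \) normal. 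This SKT computation, disentangling the \( K_X \)-part from the \( H_X \)- and \( F_X \)-couplings, is the main obstacle for part~(1).

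For part~(2) we have \( \mfg = \derg + J\derg \), so \( V_J = U_J = 0 \) and \( \bR^{2n} = \mfa_J\oplus \mfa_r\oplus J\mfa_r \). Writing the target as \( X_0 = J\tilde{X}_0 \) with \( \tilde{X}_0\in \mfa_r \), the condition \( [X_0,X]-X\in \derg_J \) for all \( X\in \derg_r \) is, reading off the \( \mfa_r \)-component of \( -A_{\tilde{X}_0}(X) - X \) and using \( G = 0 \), exactly \( F_{\tilde{X}_0} = -\id_{\mfa_r} \). So it suffices to produce \( \tilde{X}_0 \) with \( \id \in \spa{F_X : X\in \mfa_r} \). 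The family \( \Set{F_X} \) commutes (the \( \mfa_r \)-block of \( [A_{\tilde{X}},A_{\hat{X}}]=0 \) from Lemma~\ref{le:conscomplexsheardata}(v)) and is symmetric, \( F_X(\hat{X}) = f(X,\hat{X}) = f(\hat{X},X) = F_{\hat{X}}(X) \), by Lemma~\ref{le:conscomplexsheardata}(iii). Moreover, since \( \im(\omega) = \derg = \mfa \) while the only brackets landing in the \( \mfa_r \)-summand come from \( F_X(\mfa_r) = f(\mfa_r,\mfa_r) \) (as \( \omega(J\mfa_r,\mfa_J)\subset \mfa_J \), and \( \omega(J\mfa_r,J\mfa_r)\subset \mfa_J \) by Lemma~\ref{le:conscomplexsheardata}(iv), with \( \omega|_{\Lambda^2\mfa}=0 \)), the map \( f \) is forced to be surjective onto \( \mfa_r \).

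The plan is to upgrade this to a normal form by showing, via~\eqref{eq:SKTsheardata} evaluated on \( \mfa_r \)- and \( J\mfa_r \)-vectors, that each \( F_X \) is \( g \)-self-adjoint, mirroring the step in Lemma~\ref{le:Kahlersheardataconsequences} where total symmetry of \( g(f(\any,\any),\any) \) was extracted, now from the SKT rather than the K\"ahler identity. Commuting self-adjoint \( F_X \) share an orthonormal eigenbasis \( X_1,\dots,X_r \) of \( \mfa_r \), and symmetry of \( f \) then forces \( f(X_i,X_j) = -\delta_{ij}\lambda_i X_i \); surjectivity gives every \( \lambda_i\neq 0 \), so \( \tilde{X}_0 = \sum_i \lambda_i^{-1}X_i \) satisfies \( F_{\tilde{X}_0} = -\id \), and \( X_0 = J\tilde{X}_0 \) is the required element. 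The main obstacle here is again the SKT evaluation yielding self-adjointness of \( F_X \); should that fail, one would fall back on the purely algebraic assertion that a commutative product on \( \mfa_r \) that is surjective and has pairwise commuting left multiplications admits a unit.
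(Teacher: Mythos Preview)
The paper does not actually prove this lemma here: the sentence immediately preceding it reads ``In particular~\cite[Proposition 3.8 and Corollary~4.4]{FrSw2} give:'' and the statement carries a \qed. So there is no in-paper argument to compare against; you are reconstructing results established in~\cite{FrSw2}.

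Your plan for part~(1) is in the right spirit and matches the shear-data methodology: reducing to normality of the commuting complex family \( \{K_X\} \) is exactly what is needed, and feeding \( \tilde{Y},\hat{Y},X,JX \) into~\eqref{eq:SKTsheardata} is the natural probe. You correctly observe that the situation differs from Lemma~\ref{le:SKTpuretypeII} in that only one operator \( A_X \) survives (since \( \omega|_{\Lambda^2\mfa}=0 \)), so the trace-of-squares argument will not go through the two-operator identity of Lemma~\ref{le:symplecticendosareunitary} verbatim; you should expect the \( H_X \)-coupling to enter the computation and need to be controlled separately.

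For part~(2) there is a genuine gap. Your primary route---deducing \( g \)-self-adjointness of \( F_X \) from~\eqref{eq:SKTsheardata}---is the K\"ahler-case step of Lemma~\ref{le:Kahlersheardataconsequences} transplanted to SKT, and it is not clear the weaker SKT identity delivers this; the SKT condition is a four-vector, second-order constraint and does not obviously yield the three-vector symmetry \( g(f(\any,\any),\any)\in S^3 \). Your fallback, that a surjective commutative bilinear product on \( \mfa_r \) with pairwise commuting left multiplications automatically admits a unit, is not a standard algebraic fact and would itself require proof (commuting left multiplications plus symmetry of \( f \) do not force associativity, and without diagonalisability the eigenbasis argument you sketch is unavailable). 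Since the actual argument lives in~\cite[\S4]{FrSw2}, you should consult that source rather than rely on either of these unproven steps.
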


\begin{proof}[Proof of Theorem~\ref{th:compatibilitypuretypeIII}]
  Suppose on the contrary that \( (\mfg, J) \) admits a Hermitian
  metric \( \tilde{g} \) that is SKT and a Hermitian metric \( g \)
  that is balanced.
  As usual \( \derg_r \) is the \( g \)-orthogonal complement of
  \( \derg_J \) in \( \derg \) and
  \( V_{r} = \derg_{r} \oplus J\derg_{r} \).
  Write \( \derg_{\tilde{r}} \) for the \( \tilde{g} \)-orthogonal
  complement of \( \derg_J \) in~\( \derg \), and put
  \( V_{\tilde{r}} = \derg_{\tilde{r}} \oplus J\derg_{\tilde{r}} \).

  Corollary~\ref{co:balancedunimodular} gives a \( (g,J) \)-unitary
  basis \( X_1,\dots,X_{2r} \) of~\( V_r \) with
  \begin{equation}
    \label{eq:balancedunimodulartypeIII}
    C = \sum_{k=1}^r [X_{2k-1},X_{2k}] = 0.
  \end{equation}
  As \( \mfg = \derg_{J} + \derg_{\tilde{r}} + J\derg_{\tilde{r}} \),
  we may write each \( X \in V_{r} \) as
  \( X = \tilde{Y}+\tilde{W}+J\tilde{X} \) with
  \( \tilde{Y} \in \derg_{J} \) and
  \( \tilde{W},\tilde{X} \in \derg_{\tilde{r}} \).

  By Lemma~\ref{le:twostepSKT}(a) there is a complex unitary basis
  \( Y_1,\dots,Y_s \) of \( (\derg_J,\tilde{g},J) \) and
  \( \xi_1,\dots,\xi_s\in (\derg_{\tilde{r}})^{*}\otimes \bC \) such
  that
  \begin{equation*}
    [X,Y_j]=[J\tilde{X},Y_j]=\xi_j(\tilde{X}) \, Y_j
  \end{equation*}
  for each \( j\in \{1,\dots,s\} \) and \( X \in V_{r} \).
  Inserting now \( X=X_{2k-1},JX = X_{2k},Y = Y_j,JY \) into the
  version of equation~\eqref{eq:SKTsheardata} for the SKT metric
  \( \tilde{g} \), and writing \( z = \xi_{j}(\tilde{X}) \),
  \( w=\xi_{j}(\tilde{JX}) \), yields
  \begin{equation*}
    \begin{split}
      0
      &=-\tilde{g}([JX,JY],[JX,JY])
        +\tilde{g}([JX,JJY],[JX,Y])
        +\tilde{g}([JJX,JY],[X,JY]) \\
      &\qquad
        -\tilde{g}([JJX,JJY],[X,Y])
        +\tilde{g}([J[X,JX],JY],JY)
        -\tilde{g}([J[X,JX],JJY],Y) \\
      &\qquad
        -\tilde{g}([J[X,Y],JJX],JY)
        +\tilde{g}([J[X,JY],JJX],Y)
        +\tilde{g}([J[JX,Y],JX],JY) \\
      &\qquad
        -\tilde{g}([J[JX,JY],JX],Y)
      \\
      &=-2\tilde{g}(w Y, w Y)
        -2\tilde{g}(z Y, z Y)
        +\tilde{g}([J[X,JX],JY],JY)
        +\tilde{g}([J[X,JX],Y],Y)   \\
      &\qquad
        -2\tilde{g}(z^{2}Y,Y)
        -2\tilde{g}(w^{2}Y,Y) \\
      &= -2 (z\overline{z} + \re(z^{2}) + w\overline{w} + \re(w^{2}))
        +\tilde{g}([J[X,JX],JY],JY) \\
      &\qquad
        +\tilde{g}([J[X,JX],Y],Y)\\
      &=-4(\re(z)^{2}+\re(w)^{2}) +\tilde{g}([J[X,JX],JY],JY)
        +\tilde{g}([J[X,JX],Y],Y)\\
      &=-4(\re(\xi_{j}(\tilde{X}_{2k-1}))^{2}
        +\re(\xi_{j}(\tilde{X}_{2k}))^{2})
      \\
      &\qquad
        +\tilde{g}([J[X_{2k-1},X_{2k}],JY_{j}],JY_{j})
        +\tilde{g}([J[X_{2k-1},X_{2k}],Y_{j}],Y_{j}).
    \end{split}
  \end{equation*}
  Summing over \( k \) and using
  equation~\eqref{eq:balancedunimodulartypeIII}, we get
  \begin{equation*}
    0=\sum_{k=1}^r\paren[\big]{\re(\xi_j(\tilde{X}_{2k-1}))^2
    +(\re(\xi_j(\tilde{X}_{2k}))^2}.
  \end{equation*}
  Thus, \( \re(\xi_j(\tilde{X}_t))=0 \) for all \( t \), so
  \( \re(\xi_{j}(\tilde{X})) = 0 \) for all \( X \in V_{r} \) and
  all~\( j \).

  Note that
  \begin{equation*}
    \begin{split}
      \tr(\ad(X)|_{\derg_{J}})
      &= \sum_{j=1}^s \tilde{g}([X,Y_j],Y_j)+\tilde{g}([X,JY_j],JY_j)
      \\
      &= \sum_{j=1}^{s} 2\tilde{g}(\xi_j(\tilde{X})Y_j,Y_j)
        = 2 \sum_{j=1}^{s} \re(\xi_{j}(\tilde{X})) = 0.
    \end{split}
  \end{equation*}
  As
  \( \im(\ad(X))\subseteq \derg = \derg_J\oplus \derg_{\tilde{r}} \),
  the unimodularity of \( \mfg \) gives for \( X \in V_{r} \) that
  \begin{equation*}
    0=\tr(\ad(X)) = \tr(\ad(X)|_{\derg_{J}}) +
    \tr(\ad(X)|_{\derg_{\tilde{r}}})
    = \tr(\ad(X)|_{\derg_{\tilde{r}}}).
  \end{equation*}
  By Lemma~\ref{le:twostepSKT}, there exists an
  \( \tilde{X}_0\in J\derg_{\tilde{r}} \) with
  \begin{equation*}
    [\tilde{X}_0,\tilde{X}]-\tilde{X}\in\derg_J
  \end{equation*}
  for any \( \tilde{X}\in\derg_{\tilde{r}} \).  Write
  \begin{equation*}
    \tilde{X}_0 = X_0 + Y_0
  \end{equation*}
  for \( X_0\in V_r \) and \( Y_0\in \derg_J \).
  Then \( \ad(Y_{0}) = 0 \) on \( \derg \) and
  \begin{equation*}
    0 = \tr(\ad(X_{0})|_{\derg_{\tilde{r}}})
    = \tr(\ad(\tilde{X}_{0})|_{\derg_{\tilde{r}}}).
  \end{equation*}
  Choosing a \( \tilde{g} \)-orthonormal basis
  \( \tilde{S}_1,\dots,\tilde{S}_r \) of \( \derg_{\tilde{r}} \), we
  have
  \begin{equation*}
    0 = \tr(\ad(\tilde{X}_{0})|_{\derg_{\tilde{r}}})
    = \sum_{k=1}^r\tilde{g}([\tilde{X}_0,\tilde{S}_k],\tilde{S}_k)
    = \sum_{k=1}^r\tilde{g}(\tilde{S}_k,\tilde{S}_k)
    = \dim(\derg_{\tilde{r}}).
  \end{equation*}
  Thus \( \derg_r=0 \) and \( \mfg=\derg+J\derg=\derg \),
  contradicting that \( \mfg \) is solvable.
\end{proof}

We end this section by providing an example that the unimodular
condition in Theorem~\ref{th:compatibilitypuretypeIII} is necessary,
and which also supports the need for this condition in
Question~\ref{quest:G}.

\begin{example}
  \label{ex:counterexampletypeIII}
  Let \( \mfg \) be the six-dimensional Lie algebra with basis
  \( e_{1},\dots,e_{6} \) whose dual basis \( e^1,\dots,e^6 \) has
  differentials given by
  \begin{equation*}
    (-15+16,-25+26,2.(35+46),2.(36+45),0,0),
  \end{equation*}
  which is isomorphic to \( N_{6,1}^{-1/2,-1/2,0,0} \).
  Consider the almost complex structure \( J \) on~\( \mfg \) given by
  \( Je_1=e_2 \), \( Je_3=e_5 \), \( Je_4=e_6 \).
  Then \( J \) is integrable, so defines a complex structure
  on~\( \mfg \).
  We have \( \derg=\spa{e_1,\dots,e_4} \) and
  \( \derg +J\derg = \mfg \).

  Consider the metric \( \tilde{g} \) on \( \mfg \) for which
  \( e_1,\dots,e_6 \) is orthonormal.
  Then \( \tilde{g} \) is compatible with \( J \) and the associated
  fundamental two-form \( \tilde{\sigma} \) is
  \begin{equation*}
    \tilde{\sigma}=e^{12}+e^{35}+e^{46}.
  \end{equation*}
  A direct computation yields
  \( dJ^*d\tilde{\sigma} = 2dJ^{*}(e^{12}\wedge(e^{5}-e^{6})) = 2
  d(e^{12}\wedge (e^{3}-e^{4})) = 0 \), so \( \tilde{g} \) is SKT.

  Next, consider the metric \( \hat{g} \) for which
  \( e_1,e_2,e_3,e_5,e_3+e_4,e_5+e_6 \) is an orthonormal basis.
  Since this basis is unitary, \( \hat{g} \) is compatible
  with~\( J \).
  As \( e^1,e^2,e^3-e^4,\allowbreak e^5-e^6,e^4,e^6 \) is the dual of
  the above basis, the associated fundamental two-form
  \( \hat{\sigma} \) is given by
  \begin{equation*}
    \hat{\sigma}=e^{12}+(e^3-e^4)\wedge
    (e^5-e^6)+e^{46}=e^{12}+e^{35}+2 e^{46}-e^{36}-e^{45}.
  \end{equation*}
  One computes
  \begin{equation*}
    d(\hat{\sigma}^2)
    = 2 \hat{\sigma} \wedge d\hat{\sigma}
    = 4\hat{\sigma} \wedge (e^{12}\wedge(e^{5}-e^{6}) + e^{456})
    = 0,
  \end{equation*}
  so \( \hat{g} \) is a balanced metric.

  Thus, \( (\mfg,J) \) is a two-step solvable Hermitian Lie algebra of
  pure type~III that is SKT and is balanced.

  We claim that \( (\mfg,J) \) is not K\"ahler.
  For contradiction, suppose \( g \) is a compatible K\"ahler metric.
  Let \( \derg_r \) be the orthogonal complement of
  \( \derg_J = \spa{e_{1},e_{2}}\) in~\( \derg \).
  Then \( J\derg_r \) is a complement of
  \( \derg=\spa{e_1,\dots,e_4} \) in \( \mfg \) and so it has to
  contain a vector of the form \( e_5+W \) for some~\( W\in \derg \).
  Moreover, by Corollary~\ref{co:Kahlerpuretype}(III), one has
  \( \tr(\ad(V)|_{\derg_J})=0 \) for any \( V\in J\derg_r \).
  However, \( \ad(e_5+W)(e_i)=[e_5,e_i]=e_i \) for \( i=1,2 \) and so
  \( \tr(\ad(e_5+W)|_{\derg_J})=2 \), a contradiction.
\end{example}

\subsection{Dimension \texorpdfstring{\( 6 \)}{6}}
\label{subsec:dimension6}

We can now consider general unimodular six-dimensional two-step
solvable Lie algebras \( \mfg \) endowed with a complex
structure~\( J \).

\begin{theorem}
  \label{th:compatibility6d}
  Let \( \mfg \) be a six-dimensional unimodular two-step solvable Lie
  algebra endowed with a complex structure \( J \).
  If \( (\mfg,J) \) is SKT and is balanced, then it is also K\"ahler.
\end{theorem}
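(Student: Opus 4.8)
The strategy is to reduce Theorem~\ref{th:compatibility6d} to the pure-type cases already established. Given a six-dimensional unimodular two-step solvable \( (\mfg,J) \) that is SKT and balanced, I would first consider the three invariants \( 2s = \dim(\derg_J) \), \( 2r = \dim(V_r) \), \( 2\ell = \dim(V_J) \) from Definition~\ref{def:V}, which satisfy \( s+r+\ell = 3 \). If \( \derg = 0 \) the algebra is Abelian and \( J \) is Kähler, so assume \( \derg \neq 0 \). The key point is that in dimension six the constraint \( s+r+\ell=3 \) together with \( \derg \neq 0 \) leaves only a short list of possibilities for the triple \( (s,r,\ell) \), and I would argue that each forces a pure type or is handled directly by the classification in Theorem~\ref{th:classification6dSKT}.

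The case analysis runs as follows. If \( r = 0 \) then \( V_r = 0 \), so \( (\mfg,J) \) is of pure type~II and Theorem~\ref{th:compatibilitypuretypeII} applies, giving that \( J \) is Kähler. If \( s = 0 \) then \( \derg_J = 0 \), so \( (\mfg,J) \) is of pure type~I and Theorem~\ref{th:compatibilitypuretypeI} applies. If \( \ell = 0 \) then \( V_J = 0 \), so \( (\mfg,J) \) is of pure type~III; but Theorem~\ref{th:compatibilitypuretypeIII} says a unimodular two-step solvable Lie algebra of pure type~III cannot be simultaneously SKT and balanced, so this case does not occur. The remaining possibility is that \( s, r, \ell \) are all strictly positive, which in dimension six forces \( s = r = \ell = 1 \), i.e.\ \( \dim\derg_J = \dim V_r = \dim V_J = 2 \) and \( \dim\derg = 4 \).

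So the genuinely new work is the single case \( (s,r,\ell) = (1,1,1) \). Here \( \derg \) is four-dimensional with \( \derg_J \) a complex line, \( \derg_r \) a real line, and \( V_J \) of real dimension two. The plan is to invoke the six-dimensional SKT classification of Theorem~\ref{th:classification6dSKT} (equivalently the relevant entries of \cite[Corollary 4.8, Theorems 4.10, 7.5 and~7.1]{FrSw2}) to enumerate the finitely many isomorphism classes of \( \mfg \) and the compatible complex structures realising this \( (s,r,\ell) \) pattern, then for each candidate check whether it admits a balanced structure using Corollary~\ref{co:balancedunimodular} (so \( C = 0 \) in \eqref{eq:C}) together with the structural data from Lemma~\ref{le:twostepSKT}. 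I expect the obstruction will mirror the pure type~III argument: the presence of a nonzero \( \derg_r \) together with unimodularity and the balanced condition will force the trace of some \( \ad \)-operator on \( \derg_r \) to vanish and simultaneously equal a positive quantity, ruling out the mixed case entirely, so that in fact \( (s,r,\ell) = (1,1,1) \) never supports both SKT and balanced metrics. If instead some mixed algebra does admit both, one shows directly it is Kähler by exhibiting the metric, possibly after a metric change as in Corollaries~\ref{co:changebalancedmetric} and~\ref{co:changeSKTmetric}.

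The main obstacle is the mixed case \( (s,r,\ell)=(1,1,1) \): unlike the pure types, here neither \( \derg_r \) nor \( V_J \) vanishes, so one cannot cite a single earlier theorem and must instead combine the SKT classification with the balanced constraint on the concrete list of six-dimensional algebras. The bookkeeping of which complex structures on each listed Lie algebra produce this invariant pattern, and then checking the balanced condition against each, is where the real effort lies; I anticipate that the trace/unimodularity obstruction from the pure type~III proof extends to show this mixed case is vacuous, which would complete the theorem.
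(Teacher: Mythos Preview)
Your reduction to the three pure-type theorems plus the single mixed case \( (s,r,\ell)=(1,1,1) \) matches the paper exactly, and your identification of that case as the only remaining work is correct. However, your anticipated resolution of the mixed case is wrong, and this is a genuine gap.

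You expect the mixed case to be \emph{vacuous}: that the unimodularity/trace obstruction from the pure type~III argument will rule out simultaneous SKT and balanced metrics. This is not what happens. The mixed case is \emph{not} vacuous; there are genuine six-dimensional two-step solvable algebras with \( (s,r,\ell)=(1,1,1) \) carrying both SKT and balanced metrics (in fact they appear in Remark~\ref{re:Kahler6dnonpuretype} as \( \mfr'_{4,a,0}\oplus\bR^2 \) and \( \mfr_{3,0}'\oplus\aff_\bR\oplus\bR \)). The paper instead proves, in Proposition~\ref{pro:compatibility6dnonpuretype}, that every such algebra is actually K\"ahler, and notably this does \emph{not} use unimodularity at all. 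The argument starts from the explicit parametrised bracket relations of Proposition~\ref{pro:SKT6dnonpuretype} (coming from \cite[Theorem~7.5]{FrSw2}), then applies the balanced condition of Proposition~\ref{pro:balanced} to derive algebraic constraints on the parameters \( b_i, z_i \) (forcing \( \delta_0=0 \), \( \re(z_1)=\re(z_2)=0 \), etc.), and finally performs a sequence of explicit basis changes \( X\mapsto\hat X \), \( Z\mapsto\hat Z \) to exhibit a basis in which \( \sigma=e^{12}+e^{34}+e^{56} \) is closed. So rather than an obstruction argument, the mixed case requires a constructive one: you must \emph{build} the K\"ahler metric from the constrained SKT data.

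A minor slip: in the mixed case \( \dim\derg = 2s+r = 3 \), not~\( 4 \).
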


Theorems~\ref{th:compatibilitypuretypeI},
\ref{th:compatibilitypuretypeII} and~\ref{th:compatibilitypuretypeIII}
give the result when \( (\mfg,J) \) is of pure type.
However, in dimension~\( 6 \), if \( (\mfg,J) \) is not of pure type,
then we have \( \dim(\derg_J)=2 \), \( \dim(\derg_r)=1 \) and
\( \dim(V_J)=2 \).
The SKT Lie algebras of this type are described in detail
in~\cite[Theorem 7.5]{FrSw2}.
There are three cases, but they share common properties, so that the
following holds.

\begin{proposition}
  \label{pro:SKT6dnonpuretype}
  Let \( (\mfg,g,J) \) be a six-dimensional two-step solvable SKT Lie
  algebra which is not of pure type.
  Then there exist \( Y\in \derg_J \), \( X\in \derg_r \) and
  \( Z\in V_J \), all non-zero, such that \( \ad(Z) \), \( \ad(JZ) \)
  preserve~\( \derg_J \) and are complex-linear on that space.
  Additionally there exist
  \( (b_{0},b_1,b_2,b_{3})\in \bR^4\setminus \{0\} \) and
  \( z_{0},z_1,z_2,w_{0},\dots,w_{5}\in \bC \), with
  \begin{equation*}
    b_{0}b_{3} + b_1^2 + b_2^2 = 0,\qquad
    \re(z_{i}) = -\delta_{i}b_i/2,\quad \text{for \( i=0,1,2 \) and
    some \( \delta_{i} \in \Set{0,1} \)},
  \end{equation*}
  and with \( z_{0}=0 \) implying \( b_{0}=b_1=b_2=0 \), such that the
  only non-zero Lie brackets (up to anti-symmetry and complex-linear
  extension) are given by
  \begin{gather*}
    [JX,Y] = z_{0} Y,\quad [Z,Y] = z_1 Y,\quad [JZ,Y] = z_2 Y,\\
    [JX,X] = b_{0} X+w_{0} Y,\quad [Z,X] = b_1 X+ w_1 Y,\quad
    [JZ,X] = b_2 X+ w_2 Y,\\
    [Z,JX] = -b_2 X+ w_3 Y,\quad [JZ,JX] = b_1 X+w_4 Y,\quad [Z,JZ] =
    b_{3} X+ w_{5} Y.
  \end{gather*}
  \qed
\end{proposition}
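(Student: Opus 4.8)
The plan is to read the statement off the classification of six-dimensional two-step solvable SKT Lie algebras with \( \dim(\derg_J) = 2 \), \( \dim(\derg_r) = 1 \) and \( \dim(V_J) = 2 \) given in \cite[Theorem~7.5]{FrSw2}, by exhibiting each of its three cases as an instance of the uniform template above. Concretely, I would fix a unit vector \( Y\in\derg_J \) --- so that \( Y, JY \) is a complex unitary basis of \( (\derg_J,g,J) \) --- a unit \( X\in\derg_r \), and a unit \( Z\in V_J \) spanning \( V_J \) together with \( JZ \); the three cases will then differ only in the values of \( b_0,b_1,b_2,b_3\in\bR \), \( z_0,z_1,z_2,w_0,\dots,w_5\in\bC \) and the flags \( \delta_0,\delta_1,\delta_2\in\Set{0,1} \).

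Before the case analysis I would isolate the structural features common to all three, indicating which follow directly from results already in hand. Applying Lemma~\ref{le:twostepSKT}(a) with \( s=1 \) gives \( Y \) with \( [JX,Y] = z_0 Y \) in complex notation, which is the first displayed bracket. The commutation relations \( [B_Z,A_X] = [B_{JZ},A_X] = [B_Z,B_{JZ}] = 0 \) of Lemma~\ref{le:conscomplexsheardata}(v), where \( A_X = \omega(JX,\any)|_\derg \) and \( B_Z = \omega(Z,\any)|_\derg \), together with the two equations of~\eqref{eq:complexsheardata}, force \( \ad(Z) \) and \( \ad(JZ) \) to preserve \( \derg_J \) and to act there complex-linearly (this is part of what \cite[Theorem~7.5]{FrSw2} establishes), yielding \( [Z,Y] = z_1 Y \) and \( [JZ,Y] = z_2 Y \). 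The \( \derg_r \)-components of the remaining brackets define \( b_0,b_1,b_2,b_3 \) as the coefficients of \( X \) in \( [JX,X] \), \( [Z,X] \), \( [JZ,X] \) and \( [Z,JZ] \); that the \( \derg_r \)-components of \( [Z,JX] \) and \( [JZ,JX] \) are \( -b_2 X \) and \( b_1 X \) is exactly the identity \( \omega^r(JZ,JX) = \omega^r(Z,X) \) of Lemma~\ref{le:conscomplexsheardata}(v) together with its consequence \( \omega^r(JZ,X) = -\omega^r(Z,JX) \), obtained by replacing \( X \) with \( JX \). Finally, inserting the nine basis brackets into the Jacobi identity for the triple \( (Z,JZ,JX) \) --- equivalently into the first equation of~\eqref{eq:complexsheardata} --- and reading off the coefficient of \( X \) gives \( b_0 b_3 + b_1^2 + b_2^2 = 0 \); the other components only constrain the \( w_i \), which play no role in the statement.

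What remains is the genuinely SKT content, namely the relations \( \re(z_i) = -\delta_i b_i/2 \) and the degeneration \( z_0 = 0 \Rightarrow b_0 = b_1 = b_2 = 0 \). These I would obtain from~\eqref{eq:SKTsheardata} evaluated on quadruples of the form \( (X,JX,Y,JY) \), \( (Z,X,JX,\any) \) and \( (Z,JZ,\any,\any) \), by the same kind of computation as in the proof of Theorem~\ref{th:compatibilitypuretypeIII}; in practice one simply reads them, and the correct value of \( (\delta_0,\delta_1,\delta_2) \) in each case, off the three explicit normal forms listed in \cite[Theorem~7.5]{FrSw2}. I expect the only real obstacle to be this last step of bookkeeping: verifying that the three superficially different families of \cite[Theorem~7.5]{FrSw2} are indeed all special cases of the single template, with a consistent assignment of the \( \delta_i \). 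Everything else is immediate from the Lemmas cited above, so no new hard computation is required.
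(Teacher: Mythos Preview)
Your proposal is correct and matches the paper's approach: the paper does not give an independent proof but simply states the proposition as a direct consequence of \cite[Theorem~7.5]{FrSw2}, noting that the three cases there share the common structure recorded here. Your outline is a more explicit unpacking of that citation; the only small slip is that the identity \( \omega^{r}(JZ,X) = -\omega^{r}(Z,JX) \) follows from Lemma~\ref{le:conscomplexsheardata}(v) by replacing \( Z \) with \( JZ \) (since \( U_J \) is \( J \)-invariant) rather than \( X \) with \( JX \), but the conclusion is unchanged.
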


Theorem~\ref{th:compatibility6d} now follows from the following result
that does not require \( \mfg \) to be unimodular.

\begin{proposition}
  \label{pro:compatibility6dnonpuretype}
  Let \( \mfg \) be a six-dimensional Lie algebra endowed with a
  complex structure \( J \) such that \( (\mfg,J) \) is not of pure
  type.
  If \( (\mfg,J) \) is SKT and is balanced, then it also K\"ahler.
\end{proposition}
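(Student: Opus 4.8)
The plan is to play the explicit normal form of Proposition~\ref{pro:SKT6dnonpuretype} against the balanced criterion of Proposition~\ref{pro:balanced}, using that $\derg$, $\derg_J$, $\derg+J\derg$ and the linear functional $\tau\coloneqq\tr\circ\ad\colon\mfg\to\bR$ are all independent of the chosen compatible metric. Fix a compatible SKT metric $\tilde g$ and a compatible balanced metric $\hat g$ on $(\mfg,J)$. As $\mfg$ is six-dimensional and not of pure type we have $\dim\derg_J=2$, $\dim\derg_r=1$ and $\dim V_J=2$, and Proposition~\ref{pro:SKT6dnonpuretype} applied to $(\mfg,\tilde g,J)$ yields a basis $Y,JY$ of $\derg_J$, a unit vector $X$ spanning the $\tilde g$-orthogonal complement $\derg_{\tilde r}$ of $\derg_J$ in $\derg$, a unit vector $Z\in\tilde V_J$, and constants $b_0,b_1,b_2,b_3\in\bR$, $z_0,z_1,z_2\in\bC$, $w_0,\dots,w_5\in\bC$ subject to $b_0b_3+b_1^2+b_2^2=0$, $\re(z_i)=-\delta_i b_i/2$ with $\delta_i\in\Set{0,1}$, and $z_0=0\Rightarrow b_0=b_1=b_2=0$, entering the Lie brackets as listed there.

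From those brackets I would record that $\tau|_{\derg}=0$ (a general feature of two-step solvable algebras), that $\tau(JX)=(1-\delta_0)b_0$, $\tau(Z)=(1-\delta_1)b_1$, $\tau(JZ)=(1-\delta_2)b_2$, and hence that $\mfg$ is unimodular precisely when $(1-\delta_i)b_i=0$ for all $i$ (the argument below will not use unimodularity). Proposition~\ref{pro:balanced} applied to $\hat g$ supplies: (i)~$\tau$ vanishes on the $\hat g$-orthogonal complement $\hat V_J$ of the metric-independent subspace $\derg+J\derg$; (ii)~there is a $(\hat g,J)$-unitary basis $X_1,JX_1$ of $\hat V_r$ and $Z_1,JZ_1$ of $\hat V_J$ for which $C\coloneqq[X_1,JX_1]+[Z_1,JZ_1]$ is $\hat g$-orthogonal to $\derg_J$ and $\tr\ad(\any)=-\hat\sigma(C,\any)$ on $\hat V_r$. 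Since $C\in\derg$ and $C\perp_{\hat g}\derg_J$, the vector $C$ lies on the line $\derg_{\hat r}$; combining $\tau|_{\derg}=0$ with~(ii) pins $C$ down as $\lambda S$ for the unit vector $S\in\derg_{\hat r}$, where $\lambda$ is a fixed nonzero scalar times $(1-\delta_0)b_0$. So the essential content of balancedness is the single vector identity $[X_1,JX_1]+[Z_1,JZ_1]=\lambda S$ together with~(i).

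The next step is to expand $X_1$ and $Z_1$ in the $\tilde g$-adapted basis, substitute the structure constants, and compare the $\derg_J$- and $\derg_{\tilde r}$-components of that identity. The simplest consequence is already decisive: if $b_0=b_1=b_2=0$, then $b_0b_3+b_1^2+b_2^2=0$ and $(b_0,b_1,b_2,b_3)\ne0$ force $b_3\ne0$, while $[X_1,JX_1]\in\derg_J$ and the $X$-coefficient of $[Z_1,JZ_1]$ is a positive multiple of $b_3$, so the $\derg_{\tilde r}$-component of $C$ is nonzero, contradicting $\lambda S=0$; by the same quadric this also rules out $b_0=0$. Hence in the surviving situation $b_0\ne0$, so $z_0\ne0$ and $[J\derg,\derg_J]=\derg_J$. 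I would then use the remaining components of the identity, the quadric $b_0b_3+b_1^2+b_2^2=0$, and the $\delta_i$-constraints to force $b_1=b_2=b_3=0$, $\re(z_0)=0$ and $w_0=\dots=w_5=0$ — that is, to put the Lie brackets, after the change of basis involved, into the normal form of Theorem~\ref{th:KahlerdergdergJ=dergJ}. Declaring the reduced adapted basis to be unitary then gives a compatible Kähler metric, so $(\mfg,J)$ is Kähler, and Proposition~\ref{pro:balanced+SKT=Kahler} may also be invoked.

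The step I expect to be the main obstacle is this final comparison of components. The normal form of Proposition~\ref{pro:SKT6dnonpuretype} is tied to $\tilde g$ whereas the balanced conditions are tied to $\hat g$, and the two metrics induce different splittings $\derg=\derg_J\oplus\derg_{\tilde r}$ versus $\derg=\derg_J\oplus\derg_{\hat r}$ and different complements $\tilde V_J$ versus $\hat V_J$; as in the proof of Theorem~\ref{th:compatibilitypuretypeII} one cannot compare structure constants directly but must first transport the data through an explicit complex-linear change of basis, and then grind through the cases generated by $\delta_i\in\Set{0,1}$ and the quadric. Once the brackets are reduced, verifying that a compatible Kähler metric exists is routine.
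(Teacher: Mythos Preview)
Your overall strategy matches the paper's: confront the SKT normal form of Proposition~\ref{pro:SKT6dnonpuretype} with the balanced criterion of Proposition~\ref{pro:balanced}, first rule out $b_0=0$, then simplify to a basis in which a compatible K\"ahler metric is visible. Your treatment of the case $b_0=0$ is correct and is exactly how the paper argues.

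The gap is in what you expect the balanced conditions to deliver in the surviving case $b_0\neq0$. They do \emph{not} force $b_1=b_2=b_3=0$ or $w_0=\dots=w_5=0$. What the paper extracts from balancedness is much more limited: writing the balanced-adapted vectors as $\tilde X=\mu_0X+\tilde Y$ and $\tilde Z=\mu_1Z+\mu_2JX+\mu_3X+\hat Y$, the single scalar equation $\tr(\ad(J\tilde X))=-\hat g(C,\tilde X)$ combined with the quadric $b_0b_3+b_1^2+b_2^2=0$ becomes a sum of squares
\[
b_0^2\delta_0\mu_0^2+(\mu_1b_1+\mu_2b_0)^2+(\mu_1b_2+\mu_3b_0)^2=0,
\]
which yields only $\delta_0=0$ (hence $\re(z_0)=0$) together with the relations $\mu_1b_1+\mu_2b_0=0$ and $\mu_1b_2+\mu_3b_0=0$. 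The condition $\tau|_{\hat V_J}=0$ then gives $\re(z_1)=\re(z_2)=0$. That is all balancedness contributes; the constants $b_1,b_2,b_3$ and $w_0,\dots,w_5$ remain unconstrained at this stage, and the $\derg_J$-component of the identity $C=\lambda S$ that you plan to exploit carries no useful information because the two metrics split $\derg$ differently.

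The remaining simplification in the paper is purely algebraic and does not use balancedness further: one successively replaces $X$ by $\hat X=X+\tfrac{w_0}{1-ic_0}Y$, then $Z$ by $\check Z=Z-\tfrac{b_1}{b_0}J\hat X-\tfrac{b_2}{b_0}\hat X$, then $\check Z$ by a further shift in $\derg_J$, at each step using the \emph{Jacobi identity} and the vanishing of the \emph{Nijenhuis tensor} (integrability of~$J$) to kill the residual $w$-type terms. These two ingredients are essential and are missing from your outline; without them the ``grind through the cases'' you anticipate cannot close. There is also no case split on the $\delta_i$: the sum-of-squares identity forces $\delta_0=0$ outright, and the values of $\delta_1,\delta_2$ become irrelevant once $\re(z_1)=\re(z_2)=0$ is established.
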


\begin{proof}
  We use the notation of Proposition~\ref{pro:SKT6dnonpuretype}.
  Moreover, let \( \tilde{g} \) be a compatible balanced metric
  on~\( \mfg \), write \( \tilde{V}_r \) for the
  \( \tilde{g} \)-orthogonal complement of \( \derg_J \) in
  \( \derg+J\derg \) and \( \tilde{V}_J \) for the
  \( \tilde{g} \)-orthogonal complement of \( \derg+J\derg \)
  in~\( \mfg \).

  Choose a \( \tilde{g} \)-unit vector
  \( \tilde{X} \in \tilde{V}_r\cap\derg \).
  Then \( \tilde{X} \) has the form
  \begin{equation*}
    \tilde{X} = \mu_{0} X + \tilde{Y}
  \end{equation*}
  for some \( \mu_{0}\in \bR\setminus \{0\} \) and
  \( \tilde{Y}\in \derg_J \).
  We may also find a \( \tilde{g} \)-unit vector
  \( \tilde{Z} \in \tilde{V}_J \) of the form
  \begin{equation*}
    \tilde{Z}=\mu_1 Z+\mu_2 JX+\mu_3 X+\hat{Y}
  \end{equation*}
  with \( \mu_1\in \bR\setminus \{0\} \), \( \mu_2,\mu_3\in \bR \)
  and~\( \hat{Y}\in \derg_J \).
  Proposition~\ref{pro:balanced} has
  \( C = [\tilde{X},J\tilde{X}]+[\tilde{Z},J\tilde{Z}] \) and implies
  \begin{equation}
    \label{eq:tracecondnonpuretype}
    \tr(\ad(J\tilde{X}))
    = - \tilde{\sigma}(C,J\tilde{X})
    = -\tilde{g}([\tilde{X},J\tilde{X}]+[\tilde{Z},J\tilde{Z}],
    \tilde{X}).
  \end{equation}

  We have
  \( \tr(\ad(JX)) = 2\re(z_{0}) + b_{0} = (1-\delta_{0})b_{0} \) and
  hence
  \begin{equation*}
    \tr(\ad(J\tilde{X})) = \mu_{0} \tr(\ad(JX)) = \mu_{0}
    (1-\delta_{0})b_{0}.
  \end{equation*}
  On the other hand, \( \tilde{X} \) is \( \tilde{g} \)-orthogonal
  to~\( \derg_{J} \), so
  \begin{equation*}
    \begin{split}
      \MoveEqLeft
      \tilde{g}([\tilde{X},J\tilde{X}]+[\tilde{Z},J\tilde{Z}],
      \tilde{X})\\
      &=\tilde{g}([\mu_{0} X,\mu_{0} JX]+[\mu_1 Z+\mu_2 JX+\mu_3
        X,\mu_1 JZ-\mu_2 X+\mu_3 JX],\tilde{X})\\
      &=\tilde{g}\paren[\big]{\paren[\big]{-b_{0} (\mu_{0}^2
        + \mu_2^2 + \mu_3^2) + \mu_1^2b_{3}
        - 2\mu_1 \mu_2 b_1 - 2\mu_1\mu_3 b_2}X,
        \tilde{X}}\\
      &=\frac{1}{\mu_{0}}\paren[\big]{-b_{0}(\mu_{0}^2 + \mu_2^2 +
        \mu_3^2) + \mu_1^2b_{3} -2\mu_1 (\mu_2 b_1+\mu_3 b_2)}.
    \end{split}
  \end{equation*}
  Putting these expressions in to
  equation~\eqref{eq:tracecondnonpuretype} times \( \mu_{0} \), we get
  \begin{equation}
    \label{eq:conditionscoeffs}
    b_{0}(\delta_{0}\mu_{0}^2+\mu_2^2+\mu_3^2) - \mu_1^2 b_{3}
    + 2\mu_1(\mu_2 b_1+\mu_3 b_2) = 0.
  \end{equation}

  Recall that \( b_{0}b_{3} + b_{1}^{2} + b_{2}^{2} = 0 \).
  If \( b_{0}=0 \), then \( b_1=b_2=0 \), so
  \eqref{eq:conditionscoeffs} gives \( b_{3} = 0 \), which contradicts
  \( (b_{0},b_{1},b_{2},b_{3}) \) being non-zero.
  Thus \( b_{0} \ne 0 \).
  Now multiplying equation~\eqref{eq:conditionscoeffs} by \( b_{0} \)
  gives
  \begin{equation*}
    \begin{split}
      0&=b_{0}^{2}(\delta_{0}\mu_{0}^2+\mu_2^2+\mu_3^2)
         +(b_1^2+b_2^2)\mu_1^2+2\mu_1b_{0}(
         \mu_2 b_1+\mu_3 b_2)\\
       &= b_{0}^2\delta_{0} \mu_{0}^2+ (\mu_1b_1 + \mu_2b_{0})^2
         +(\mu_1b_2 + \mu_3b_{0})^2.
    \end{split}
  \end{equation*}
  As \( \mu_{0}\neq 0 \), we thus have
  \begin{equation*}
    \delta_{0} = 0,\qquad
    \mu_{1}b_{1} + \mu_{2}b_{0} = 0 \quad\text{and}\quad
    \mu_{1}b_{2} + \mu_{3}b_{0} = 0.
  \end{equation*}
  Using this, and since the structure equations give
  \( \tr(\ad(Z))=(1-\delta_1)b_1 \) and
  \( \tr(\ad(JZ))=(1-\delta_2)b_2 \), one computes
  \begin{equation*}
    \begin{split}
      \tr(\ad(\tilde{Z}))
      &= \mu_1 \tr(\ad(Z))+\mu_2 \tr(\ad(JX))
        = \mu_1(1-\delta_1)b_1 + \mu_2 b_{0} \\
      &= -\delta_1 \mu_1 b_1 = 2\mu_{1} \re(z_{1}),
    \end{split}
  \end{equation*}
  and similarly
  \( \tr(\ad(J\tilde{Z})) = -\delta_{2}\mu_{1}b_{2} =
  2\mu_{1}\re(z_{2}) \).
  By Proposition~\ref{pro:balanced}, we must have
  \( \tr(\ad(\tilde{Z}))=\tr(\ad(J\tilde{Z}))=0 \), so
  \( \re(z_{i}) = 0 \) for \( i=1,2 \).
  Write \( z_{j} = ic_{j} \), for some \( c_{j}\in \bR \),
  \( j = 0,1,2 \).

  Putting \( \hat{X} \coloneqq X + (w_{0}/(1-ic_{0}))Y \), one
  computes that
  \begin{equation*}
    [J\hat{X},Y] = ic_{0} Y,\qquad [J\hat{X},\hat{X}] = b_{0} \hat{X}.
  \end{equation*}
  Note that, since \( b_{0}\neq 0 \), we must have
  \( c_{0} = -iz_{0} \neq 0 \).  Noting that
  \begin{equation*}
    [Z,\hat{X}]=b_1 \hat{X}+\hat{w}_1 Y \quad\text{and}\quad
    [JZ,\hat{X}]=b_2 \hat{X}+\hat{w}_2 Y ,
  \end{equation*}
  for some \( \hat{w}_1,\hat{w}_2\in \bC \), the Jacobi identity
  yields
  \begin{equation*}
    \begin{split}
      0&=[Z,[J\hat{X},\hat{X}]]+[J\hat{X},[\hat{X},Z]]
         =[Z,b_{0}\hat{X}]-[J\hat{X},b_1\hat{X}+\hat{w}_1
         Y]\\
       &=b_{0} b_1 \hat{X}+b_{0}\hat{w}_1Y-b_{0} b_1 \hat{X}
         - ic_{0} \hat{w}_1 Y
         =(b_{0}-ic_{0})\hat{w}_1 Y.
    \end{split}
  \end{equation*}
  As \( b_{0}\ne 0 \), we conclude that \( \hat{w}_1=0 \).
  Similarly, we obtain \( \hat{w}_2=0 \).  Thus, setting
  \begin{equation*}
    \check{Z} \coloneqq Z - \frac{b_1}{b_{0}} J\hat{X} -
    \frac{b_2}{b_{0}}\hat{X},
  \end{equation*}
  one easily checks that
  \begin{equation*}
    [\check{Z},\hat{X}]=0,\quad
    [J\check{Z},\hat{X}]=0,\quad
    [\check{Z},Y]=i\tilde{c}_1 Y,\quad
    [J\check{Z},Y]=i\tilde{c}_2 Y,
  \end{equation*}
  for some \( \tilde{c}_1,\tilde{c}_2\in \bC \) and that
  \( [\check{Z},J\check{Z}]\in \derg_J \).  Moreover, we check that
  \begin{equation*}
    [\check{Z},J\hat{X}]=\tilde{w} Y,\qquad
    [J\check{Z},J\hat{X}]=\hat{w} Y,
  \end{equation*}
  for certain \( \tilde{w},\hat{w}\in \bC \).
  Then, the vanishing of the Nijenhuis tensor \( N_J \) on
  \( (\check{Z},\hat{X}) \) yields \( \hat{w}=i\tilde{w} \).
  Hence, setting
  \begin{equation*}
    \hat{Z} \coloneqq \tilde{Z}+\frac{\tilde{w}}{ic_{0}}Y,
  \end{equation*}
  one calculates
  \begin{equation*}
    [\hat{Z},J\hat{X}]= [J\hat{Z},J\hat{X}]=0.
  \end{equation*}
  Furthermore, \( [\hat{Z},J\hat{Z}]\in \derg_J \) so the Jacobi
  identity yields
  \begin{equation*}
    0=[\hat{Z},[J\hat{Z},J\hat{X}]]
    +[J\hat{Z},[J\hat{X},\hat{Z}]]+[\hat{JX},[\hat{Z},J\hat{Z}]]
    =ic_{0} [\hat{Z},J\hat{Z}],
  \end{equation*}
  and we conclude that \( [\hat{Z},J\hat{Z}]=0 \).

  Thus, denoting the basis
  \( Y,JY,\hat{X},J\hat{X},\hat{Z},J\hat{Z} \) of \( \mfg \) by
  \( e_1,\dots,e_6 \), the differentials of the dual basis
  \( e^1,\dots,e^6 \) are given by
  \begin{equation*}
    (-c_{0}.24-\tilde{c}_1.25-\tilde{c}_2.26,
    c_{0}.14+\tilde{c}_1.15+\tilde{c}_2.16,b_{0}.34,0,0,0).
  \end{equation*}
  The metric \( g \) for which \( e_1,\dots,e_6 \) is orthonormal is
  Hermitian for~\( J \) with associated two-form
  \( \sigma=e^{12}+e^{34}+e^{56} \).
  But \( d\sigma = 0 \), so \( (\mfg,g,J) \) is K\"ahler.
\end{proof}

\begin{remark}
  \label{re:Kahler6dnonpuretype}
  From the proof of Proposition~\ref{pro:compatibility6dnonpuretype},
  one deduces that the \( 6 \)-dimensional two-step solvable Lie
  algebras which admit a K\"ahler structure of non-pure type are given
  by
  \begin{equation*}
    (-24,14,a.  34,0,0,0),\qquad (-25,15,34,0,0,0)
  \end{equation*}
  for \( a>0 \), with the second case occurring exactly when
  \( (\tilde{c}_{1},\tilde{c}_{2}) \ne 0 \).
  The algebras in the family are almost Abelian and isomorphic to
  \( \mfr'_{4,a,0}\oplus \bR^2 \).
  The second algebra is isomorphic to
  \( \mfr_{3,0}'\oplus \aff_{\bR}\oplus \bR \).
\end{remark}

From Remark~\ref{re:Kahler6dnonpuretype} and
Corollary~\ref{co:Kahlerpuretype} we deduce

\begin{corollary}
  \label{th:6dKahler}
  The six-dimensional two-step solvable Lie algebras admitting a
  K\"ahler structure are the following ones:
  \begin{gather*}
    N_{6,14}^{\alpha,\beta,0}\ (\alpha\beta\neq 0),\
    \mfg_{6,11}^{\alpha,0,0,\delta}\ (\alpha\delta\neq 0),\,
    \mfg_{5,17}^{0,0,\lambda}\oplus \bR\ (\lambda\in (0,1]),\
    \mfr'_{4,a,0}\oplus \bR^2\ (a>0),\\
    \mfr_{3,0}'\oplus \mfr_{3,0}',\ \mfr_{3,0}'\oplus \aff_{\bR}\oplus
    \bR,\ \mfr_{3,0}'\oplus \bR^3,\ 3\aff_{\bR},\ 2\aff_{\bR}\oplus
    \bR^2,\ \aff_{\bR}\oplus \bR^4,\ \bR^6.
  \end{gather*}
\end{corollary}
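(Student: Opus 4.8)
The plan is to read the classification off the structural results already in hand. By Definition~\ref{def:V}, a six-dimensional Hermitian Lie algebra \( (\mfg,J) \) is either of one of the three pure types, or else has \( \derg_J \), \( V_r \) and \( V_J \) all non-zero, in which case necessarily \( \dim\derg_J=2 \), \( \dim\derg_r=1 \), \( \dim V_J=2 \); this last profile is precisely the hypothesis of Proposition~\ref{pro:compatibility6dnonpuretype}. So I would prove the corollary by splitting into four cases --- the three pure types and this single non-pure one --- writing down the six-dimensional isomorphism classes produced in each, and then merging the lists and discarding repetitions (for instance \( \bR^6 \) and \( 3\aff_{\bR} \) occur under several types).

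The non-pure case requires no new work: Remark~\ref{re:Kahler6dnonpuretype} already gives the Kähler algebras of non-pure type in dimension six as \( \mfr'_{4,a,0}\oplus\bR^2 \) for \( a>0 \) and \( \mfr'_{3,0}\oplus\aff_{\bR}\oplus\bR \). For the three pure types I would specialise Corollary~\ref{co:Kahlerpuretype} to \( n=3 \). Pure type~I gives, by the remark following that corollary, \( (\mfg,J)\cong r\,(\aff_{\bR},J)\oplus(\bR^{6-2r},J) \) with \( r\in\{0,1,2,3\} \), i.e.\ \( \bR^6 \), \( \aff_{\bR}\oplus\bR^4 \), \( 2\aff_{\bR}\oplus\bR^2 \) and \( 3\aff_{\bR} \). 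In pure type~II the bracket is \( [Z,Y_j]=\beta_j(Z)JY_j \) with all \( \beta_j\in V_J^* \) non-zero, so \( V_J\neq 0 \) and \( 1\leqslant s\leqslant 2 \): for \( s=1 \) the single form \( \beta_1 \) has a three-dimensional kernel in \( V_J \), yielding \( \mfr'_{3,0}\oplus\bR^3 \); for \( s=2 \) one has \( \dim V_J=2 \), and \( \beta_1,\beta_2 \) are either independent, yielding \( \mfr'_{3,0}\oplus\mfr'_{3,0} \), or proportional, yielding a family that after rescaling the generator (and possibly conjugating) becomes \( \mfg_{5,17}^{0,0,\lambda}\oplus\bR \) with \( \lambda\in(0,1] \).

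Pure type~III is the delicate case. Here \( V_J=0 \), \( \dim\derg_J=2s \), \( \dim\derg_r=r \), \( r\geqslant 1 \), \( s+r=3 \). The case \( s=0 \) is again \( 3\aff_{\bR} \). For \( s=2 \), \( r=1 \): the single element \( JX_1 \) acts on \( \derg_J \) through two non-zero one-forms on the line \( \derg_r \), hence by two non-zero rotations, and on \( \derg_r \) by the non-zero scalar \( \lambda_1 \); rescaling identifies the resulting \( \bR\ltimes\bR^5 \) with \( \mfg_{6,11}^{\alpha,0,0,\delta} \). For \( s=1 \), \( r=2 \): the pair \( JX_1,JX_2 \) spans an abelian complement to \( \derg \), acting on the rotation plane \( \derg_J \) with weights \( \alpha_1(X_1),\alpha_1(X_2) \) and on the lines \( \spa{X_1},\spa{X_2}\subset\derg_r \) by \( \lambda_1,\lambda_2 \); recognising this \( \bR^2\ltimes\bR^4 \) up to isomorphism --- moving the three weight vectors by \( GL(2,\bR) \) on the complement and rescaling the weight spaces --- gives the \( N_{6,14}^{\alpha,\beta,0} \) family. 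Collecting the four lists and removing duplicates produces the statement.

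The main obstacle I anticipate is organisational rather than conceptual: reliably recognising each \( \bR^k\ltimes\bR^{6-k} \) that the structure equations generate as one of the named algebras of Table~\ref{table_LAs}, keeping track of normalisations (which ratios of rotation speeds or eigenvalues are genuine moduli, e.g.\ \( \lambda\in(0,1] \) in the \( \mfg_{5,17} \) case), and checking that the four case-lists are exhaustive and have no hidden coincidences.
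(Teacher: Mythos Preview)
Your proposal is correct and follows essentially the same route as the paper: split into the three pure types plus the single non-pure profile, invoke Remark~\ref{re:Kahler6dnonpuretype} for the latter, and specialise Corollary~\ref{co:Kahlerpuretype} to \(n=3\) for the former, then identify each resulting bracket with a named algebra from Table~\ref{table_LAs}. The paper's proof organises the pure-type cases by \(\dim\derg_J\) rather than by \((s,r)\), but the content and the resulting identifications are the same.
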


\begin{proof}
  By Remark~\ref{re:Kahler6dnonpuretype},
  \( \mfr'_{4,a,0}\oplus \bR^2 \) for \( a>0 \) and
  \( \mfr_{3,0}'\oplus \aff_{\bR}\oplus \bR \) are precisely the
  six-dimensional two-step solvable Lie algebras admitting a K\"ahler
  structure which are not of pure type.

  For pure type~I, Corollary~\ref{co:Kahlerpuretype}(I) gives that the
  algebras are \( k\aff_{\bR}\oplus \bR^{6-2k} \) for
  \( k\in \{0,\dots,3\} \).

  For pure type~II, we use Corollary~\ref{co:Kahlerpuretype}(II).
  This gives (a)~for \( \dim(\derg_J)=2 \), the algebra
  \begin{equation*}
    (-23,13,0,0,0,0),
  \end{equation*}
  which is \( \mfr_{3,0}'\oplus \bR^3 \) and (b)~for
  \( \dim(\derg_J)=4 \), one of
  \begin{equation*}
    (-25,15,-46,36,0,0),\qquad (-25,15,-\lambda.45,\lambda.35,0,0)
  \end{equation*}
  for \( \lambda\in (0,1] \), which are
  \( \mfr_{3,0}'\oplus \mfr_{3,0}' \) and
  \( \mfg_{5,17}^{0,0,\lambda}\oplus \bR \), respectively.

  Finally, for pure type~III, Corollary~\ref{co:Kahlerpuretype}(III)
  gives (a)~for \( \dim(\derg_J)=2 \) the algebras
  \begin{equation*}
    (-25-c.26,15+c.16,a_1.35,a_2.46,0,0)
  \end{equation*}
  for some \( a_1,a_2\in \bR\setminus \{0\} \), \( c\in \bR \), which
  are isomorphic to \( N_{6,14}^{\alpha,\beta,0} \) for certain
  \( \alpha,\beta\in \bR\setminus \{0\} \), and (b)~for
  \( \dim(\derg_J)=4 \), the algebras
  \begin{equation*}
    (-26,16,-c.46,c.36,a.56,0)
  \end{equation*}
  for certain \( a,c\in \bR\setminus \{0\} \), which are isomorphic to
  \( \mfg_{6,11}^{\alpha,0,0,\delta} \) for
  \( \alpha,\delta\in \bR\setminus \{0\} \).
\end{proof}

\providecommand{\bysame}{\leavevmode\hbox to3em{\hrulefill}\thinspace}
\providecommand{\MR}{\relax\ifhmode\unskip\space\fi MR }
\providecommand{\MRhref}[2]{%
  \href{http://www.ams.org/mathscinet-getitem?mr=#1}{#2}
}
\providecommand{\href}[2]{#2}

\end{document}